\newtheorem{theorem}{Theorem}
\newtheorem{corollary}[theorem]{Corollary}
\newtheorem{example}[theorem]{Example}
\newtheorem{lemma}[theorem]{Lemma}
\newtheorem{proposition}[theorem]{Proposition}
\newtheorem{defn}[theorem]{Definition}
\newproof{proof}{Proof}
\numberwithin{equation}{subsection}
\numberwithin{theorem}{subsection}
\newcommand{\perm}{\mathrm{perm}}
\newcommand{\ignore}[1]{}
\newcommand{\cat}[1]{\mathfrak{#1}}
\DeclareMathOperator{\ob}{Ob}
\DeclareMathOperator{\card}{card}
\DeclareMathOperator{\inc}{inc}
\DeclareMathOperator{\Ran}{ran}
\DeclareMathOperator{\Pwr}{Pwr}
\DeclareMathOperator{\gen}{Gen}
\DeclareMathOperator{\sgn}{sgn}
\newcommand{\Set}{\mathbf{Set}}
\begin{document}

\begin{frontmatter}

\title{Incidence Hypergraphs: Injectivity, Uniformity, and Matrix-tree Theorems}

\author[add2]{Will Grilliette}
\author[add2]{Josephine Reynes\fnref{fn2}}
\author[add2]{Lucas J. Rusnak\corref{mycorrespondingauthor}}\ead{Lucas.Rusnak@txstate.edu}

\address[add2]{Department of Mathematics, Texas State University, San Marcos, TX 78666, USA}

\fntext[fn2]{Portions of these results appear in 2019 Honors thesis.}

\cortext[mycorrespondingauthor]{Corresponding author}

\begin{abstract}

An oriented hypergraph is an oriented incidence structure that allows for the generalization of graph theoretic concepts to integer matrices through its locally signed graphic substructure. The locally graphic behaviors are formalized in the subobject classifier of incidence hypergraphs.  Moreover, the injective envelope is calculated and shown to contain the class of uniform hypergraphs --- providing a combinatorial framework for the entries of incidence matrices. A multivariable all-minors characteristic polynomial is obtained for both the determinant and permanent of the oriented hypergraphic Laplacian and adjacency matrices arising from any integer incidence matrix. The coefficients of each polynomial are shown to be submonic maps from the same family into the injective envelope limited by the subobject classifier. These results provide a unifying theorem for oriented hypergraphic matrix-tree-type and Sachs-coefficient-type theorems. Finally, by specializing to bidirected graphs, the trivial subclasses for the degree-$k$ monomials of the Laplacian are shown to be in one-to-one correspondence with $k$-arborescences.
\end{abstract}

\begin{keyword}
Laplacian \sep incidence hypergraph \sep oriented hypergraph \sep characteristic polynomial.
\MSC[2010] 05C50 \sep 05C65 \sep 05C22 \sep 18A25 \sep 05B20
\end{keyword}

\end{frontmatter}

\section{Introduction and Background}

\subsection{Introduction}

The Matrix-tree Theorem for graphs states that the first cofactor of the Laplacian matrix of a connected graph $G$ is equal to the tree-number of the graph, $\tau(G)$; this is equivalent to counting all rooted spanning trees for each $(v_i,v_i)$-minor to determine the coefficient of the degree-$1$ term of the characteristic polynomial of the Laplacian \cite{Tutte}. One common alternative presentation of the Matrix-tree Theorem are in terms of the product of the non-zero eigenvalues as they satisfy $\lvert V \rvert \cdot \tau(G) = \lambda_1 \lambda_2 \ldots \lambda_{n-1}$. Moreover, the $k^\text{th}$ minors count $k$-arborescences, and provide a combinatorial interpretation for the coefficients of the other monomials of the characteristic polynomials of the Laplacian --- 
with the second minors able to provide a combinatorial interpretation of Kirchhoff's Laws \cite{BSST,Tutte}. Sachs' Coefficient Theorem determines the coefficients of the characteristic polynomial of the adjacency matrix of a graph by counting reduced cycle-covers of the graph \cite{SGBook}, and has applications rooted in molecular orbitals \cite{SachsChem}. Generalizations of the Matrix-tree Theorem and Sachs' Coefficient Theorem to signed graphs appear in \cite{Seth1} and \cite{Sim1}, respectively. Signed graphs are a graph equipped with an edge signing function $E \rightarrow \{+1,-1\}$ and have their early roots in psychological balance and matroids \cite{Abel1, Har0, Har1, SG}. Orientations of signed graphs, or bidirected graphs, appear in \cite{MR0267898,OSG} and have applications in integer linear programs. Incidence orientations for hypergraphs allow for the study of integer matrices through their locally signed-graphic substructure \cite{IH1,AH1,OH1} and have applications in balanced matrices \cite{BM,DBM}. The spectral properties of oriented hypergraphs \cite{Reff6, Reff2, Reff5}, as well as their characteristic polynomials \cite{OHSachs, OHMTT} have received recent attention. 

We provide a unifying interpretation of the Matrix-tree Theorem and Sachs' Coefficient Theorem through the incidence structure of oriented hypergraphs while generalizing them to integer matrices and multivariate characteristic polynomials. This is accomplished by generalizing the work in \cite{OHSachs, OHMTT} and expanding on the category of incidence hypergraphs introduced in \cite{IH1} by identifying the subobject classifier (local graph behaviour) and injective envelope (uniform hypergraphs). The incidence structure is required to unify the study of the adjacency and Laplacian matrices as paths of half-integer length are crucial for the weak-walk (vertex-to-edge walks) interpretation introduced in \cite{OHHar,AH1}. An embedding of disjoint paths into the injective envelope is constructed to admit all possible embeddings to produce a hypergraphic generalization of cycle-covers to study integer matrices. We show that the coefficient of any given minor is the signed sum of restricted cycle-cover analogs that exists in $G$. Moreover, the trivial sub-classes of these maps correspond to $k$-arborescences when $G$ is a graph.
 
The classical Matrix-tree Theorem and Sachs' Coefficient Theorem are recalled in Subsection \ref{ssec:MTTandSach}, and an example is included to focus on the types of figures that will be unified by the weak-walk interpretation of the Laplacian and adjacency matrices for oriented hypergraphs. The background for incidence and oriented hypergraphs appear in Subsections \ref{ssec:IHbackground} and \ref{ssec:OHBackground}. Section \ref{sec:Cat} builds on the categorical foundation for incidence hypergraphs introduced in \cite{IH1}. For the category of incidence hypergraphs we characterize (1) the partial morphism representer, (2) the subobject classifier $\Omega_{\cat{R}}$, (3) subobjects, (4) the power-object, (5) injectivity, (6) essential monomorphisms, and (7) the injective envelope, where the injective envelope of an incidence-simple hypergraph is the minimal uniform hypergraph that contains it. Uniform oriented hypergraphs have been previously examined as alternatives to incidence duality and line graphs \cite{Reff3} and their connection to Hadamard matrices \cite{Reff5}.

Section \ref{sec:MTT} provides a characterization of oriented hypergraphic total-minor polynomials for both the permanent and determinant of the adjacency and Laplacian matrices. A family of incidence-path maps, introduced in \cite{OHSachs}, are used to form permutation clones that generalize the concept of a cycle-cover. The signed images of these maps can be interpreted as subobjects within the injective envelope, and the subobject classifier eliminates any that are not relevant to each minor calculation. In effect, this demonstrates that restrictions of uniform hypergraphic results can obtain all hypergraphic results, provided it takes place in the category of incidence hypergraphs. Tutte's $k$-arborescence Theorem is an immediate specialization of the these results as the single-element mapping families resulting from $k$ deletions in the pre-image are associated to each degree-$k$ monomial --- this is a strengthening of the results in \cite{OHMTT} which shows the single-element activation classes for each for degree-$1$ monomials are in one-to-one correspondence with Tutte's Matrix-Tree Theorem.

\subsection{The Matrix-tree and Sachs' Theorems}
\label{ssec:MTTandSach}

The Matrix-tree Theorem for graphs uses the determinant of the first minors of the graph Laplacian to count spanning trees \cite{Tutte}. It can be used to determine the coefficient of the degree-$1$ term of the characteristic polynomial of the Laplacian by counting all rooted spanning trees for each $(v_i,v_i)$-minor. 

\begin{theorem}[Matrix-tree Theorem]
\label{TMTT}
Let $G$ be a connected graph with Laplacian $\mathbf{L}_G$ and $ij$-minor $\mathbf{L}_{ij}$, then
\begin{align*}
\det(\mathbf{L}_{ij})=(-1)^{i+j}\tau(G).
\end{align*}
\end{theorem}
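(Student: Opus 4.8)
The plan is to realize the Laplacian as a Gram matrix of an oriented incidence matrix and then combine the Cauchy--Binet formula with a total-unimodularity argument. First I would fix an arbitrary orientation of each edge of $G$ and form the signed $n \times m$ incidence matrix $\mathbf{B}$ (entry $+1$ at the head of an edge, $-1$ at the tail, and $0$ otherwise), so that $\mathbf{L}_G = \mathbf{B}\mathbf{B}^{\mathsf{T}}$. This factorization is the bridge that converts the algebraic minor into a combinatorial count, and it is exactly the graph-level shadow of the incidence framework developed later in the paper.

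Treating the diagonal case $i = j$ first, I would delete row $i$ from $\mathbf{B}$ to obtain the reduced matrix $\mathbf{B}_i$, so that $\mathbf{L}_{ii} = \mathbf{B}_i \mathbf{B}_i^{\mathsf{T}}$, and apply the Cauchy--Binet formula:
\begin{align*}
\det(\mathbf{L}_{ii}) = \sum_{S} \det\!\big(\mathbf{B}_i[S]\big)^2,
\end{align*}
where $S$ runs over the $(n-1)$-element subsets of the edge set and $\mathbf{B}_i[S]$ is the corresponding square submatrix. The crux is then the combinatorial lemma that $\det(\mathbf{B}_i[S]) = \pm 1$ when the edges of $S$ form a spanning tree of $G$, and $\det(\mathbf{B}_i[S]) = 0$ otherwise. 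I expect this to be the main obstacle: the vanishing direction follows because any cycle among the edges of $S$ produces a linear dependence among the columns, while the $\pm 1$ evaluation is handled by induction, repeatedly expanding the determinant along the row of a degree-one vertex (a leaf of the tree). Summing the surviving terms yields $\det(\mathbf{L}_{ii}) = \tau(G)$, which is the $(-1)^{i+i} = 1$ case.

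Finally, to pass to arbitrary $i \neq j$, I would exploit the singularity of $\mathbf{L}_G$. Since $G$ is connected, $\mathbf{L}_G$ has rank $n-1$ with kernel spanned by the all-ones vector $\mathbf{1}$, so $\det(\mathbf{L}_G)=0$. From $\mathbf{L}_G\,\mathrm{adj}(\mathbf{L}_G) = \det(\mathbf{L}_G)\mathbf{I} = \mathbf{0}$, together with the symmetric identity on the other side, every column of $\mathrm{adj}(\mathbf{L}_G)$ lies in $\ker \mathbf{L}_G = \langle \mathbf{1}\rangle$ and every row lies in the left kernel, forcing $\mathrm{adj}(\mathbf{L}_G) = c\,\mathbf{J}$ for a scalar $c$ and the all-ones matrix $\mathbf{J}$. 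Because the $(j,i)$ entry of $\mathrm{adj}(\mathbf{L}_G)$ is the cofactor $(-1)^{i+j}\det(\mathbf{L}_{ij})$, all cofactors coincide with the common value $c$; evaluating at $i=j$ via the previous paragraph gives $c = \tau(G)$, and hence $(-1)^{i+j}\det(\mathbf{L}_{ij}) = \tau(G)$ for every $i$ and $j$. The only step requiring genuine care is the unimodularity lemma of the second paragraph; the remainder is formal linear algebra once the Gram factorization is in place.
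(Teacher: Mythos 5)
Your proposal is correct, but it is a genuinely different argument from the one the paper is built around: the paper states Theorem \ref{TMTT} as classical background (citing \cite{Tutte}) without proof, and the route it actually develops recovers the theorem as a specialization of the contributor machinery --- the multivariate total-minor polynomial of Theorem \ref{t:Main}, under which the coefficient of a degree-$1$ (more generally degree-$k$) monomial is a signed count of reduced contributors, and Theorem \ref{karbor} together with Lemma \ref{lemspan} identifies the single-element activation classes with spanning trees ($k$-arborescences), following \cite{OHMTT}. Your Cauchy--Binet proof is the standard and fully rigorous one: the Gram factorization $\mathbf{L}_G=\mathbf{B}\mathbf{B}^{\mathsf{T}}$ is indeed the graph-level shadow of the paper's $\mathbf{L}_G=\mathbf{H}_G\mathbf{H}_G^{\mathsf{T}}$, your unimodularity lemma and leaf-expansion induction are sound (just note explicitly that you must expand along a leaf \emph{distinct from} the deleted vertex $i$, which exists since every tree has at least two leaves, and that Cauchy--Binet needs $m\geq n-1$, guaranteed by connectivity), and the adjugate argument $\mathrm{adj}(\mathbf{L}_G)=c\mathbf{J}$ cleanly handles $i\neq j$. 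What the comparison buys: your argument is short, self-contained, and elementary, but it hinges on the fact that every maximal minor of the graph incidence matrix lies in $\{0,\pm 1\}$, a total-unimodularity property that fails for hypergraph incidence matrices, whose entries are arbitrary integers; squared minors then no longer reduce to $0/1$ indicators of trees, so this route does not extend beyond graphs. The paper's contributor approach replaces squared-minor cancellation with signed counts of incidence-preserving maps into the injective envelope, which is heavier machinery but yields simultaneously the all-minors version (in the spirit of Chaiken \cite{Seth1}), the permanent as well as the determinant, and the multivariate polynomial for arbitrary integer incidence matrices --- with your Theorem \ref{TMTT} recovered as the degree-$1$, single-activation-class special case.
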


Larger minors count $k$-arborescences, with the second minors able to provide a combinatorial interpretation of Kirchhoff's Laws \cite{BSST}. 

\begin{example}
\label{ex:MTT1}
Consider $K_3$, the complete graph on $3$ vertices. It has the following Laplacian matrix and characteristic polynomial.
\begin{align*}
\mathbf{L}_{K_3}=\left[
\begin{array}{ccc}
2 & -1 & -1 \\ 
-1 & 2 & -1 \\ 
-1 & -1 & 2
\end{array}
\right]\\
\chi(\mathbf{L}_{K_3},x) = x^3 - 6x^2+ 9x 
\end{align*}

The coefficients of $\chi(\mathbf{L}_{K_3},x)$ are determined by $k$-arborescences --- a disjoint collected of $k$ rooted trees that span $G$. Figure \ref{fig:MTT1} depicts all the figures for each coefficient of $\chi(\mathbf{L}_{K_3},x)$.

\begin{figure}[!ht]
    \centering
    \includegraphics{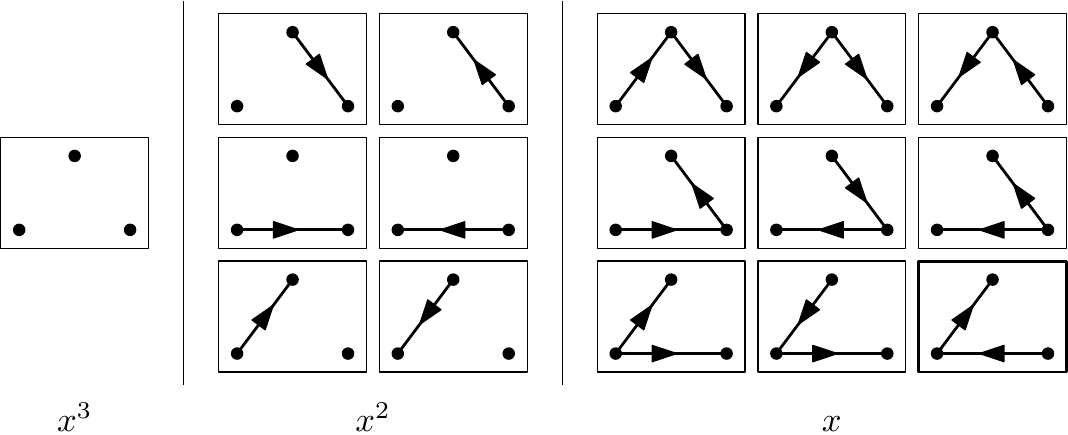}
    \caption{Rooted $k$-forests produce the coefficients of $\chi(\mathbf{L}_{K_3},x)$.}
    \label{fig:MTT1}
\end{figure}
\end{example}

Sachs' Theorem determines all the coefficients of the characteristic polynomial of the adjacency matrix of a graph by counting reduced cycle-covers of the graph \cite{SGBook}, and has its applications rooted in applications to molecular orbitals \cite{SachsChem}. A \emph{cycle-cover} of a graph $G$ is a disjoint collection of cycle-subgraphs, single edges, and isolated vertices. Let $\mathscr{U}_{k}$ be the set of all cycle covers of $G$ with exactly $k$ isolated vertices. Let $p(U)$ denote the number of components of in cycle cover $U$, and $c(U)$ denote the number of cycles in $U$.

\begin{theorem}[Sachs' Theorem]
\label{Sachs} Given a graph $G$ and adjacency matrix $\mathbf{A}_G$, the characteristic polynomial is
\begin{equation*}
\chi(\mathbf{A}_G,x)=\sum\limits_{k=1}^{\lvert V(G) \rvert}\left( \sum\limits_{U\in \mathscr{U}%
_{k}}(-1)^{p(U)}(2)^{c(U)}\right) x^{k}\text{.}
\end{equation*}
\end{theorem}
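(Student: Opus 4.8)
The plan is to reduce the characteristic polynomial to its principal-minor expansion and then apply the Leibniz formula to each minor, reading cycle covers directly off the surviving permutations. Write $A = \mathbf{A}_G$ and $n = \lvert V(G)\rvert$. The first step records the standard expansion of the coefficients of the characteristic polynomial in terms of principal minors: the coefficient of $x^{k}$ is $(-1)^{\,n-k}$ times the sum of the $(n-k)\times(n-k)$ principal minors $\det(A[S])$ taken over all $S \subseteq V(G)$ with $\lvert S\rvert = n-k$. I would fix the sign convention for $\chi(\mathbf{A}_G,x)$ at the very outset, since the global factor it contributes is exactly what must later be absorbed into the factor $(-1)^{p(U)}$.

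Next I would expand each principal minor by the Leibniz formula, $\det(A[S]) = \sum_{\sigma}\sgn(\sigma)\prod_{i\in S} A_{i\sigma(i)}$, summed over permutations $\sigma$ of $S$. Because $A$ is a symmetric $0$--$1$ matrix with zero diagonal, the monomial $\prod_{i\in S}A_{i\sigma(i)}$ vanishes unless $\sigma$ is fixed-point-free and $\{i,\sigma(i)\}$ is an edge of $G$ for every $i \in S$. The disjoint-cycle decomposition of such a $\sigma$ then reads off as a cycle cover of the induced subgraph $G[S]$: each transposition is a single edge and each cycle of length at least $3$ is a cycle-subgraph. Adjoining the $k = n - \lvert S\rvert$ vertices outside $S$ as isolated components produces an element $U \in \mathscr{U}_k$, and this passage from admissible permutations to cycle covers is the correspondence to be enumerated.

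The enumeration then needs two counts. First, each cycle-subgraph of length at least $3$ arises from exactly two permutation-cycles --- its two cyclic orientations --- which carry the same monomial and the same sign, whereas a single edge arises from a unique transposition; summing over these orientations contributes the factor $2^{c(U)}$, where $c(U)$ is the number of cycle-subgraphs. Second, a fixed-point-free permutation of $S$ whose decomposition has $q$ cycles satisfies $\sgn(\sigma) = (-1)^{\lvert S\rvert - q} = (-1)^{(n-k)-q}$, and $q$ is precisely the number of non-isolated components of $U$.

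The main obstacle, and the only genuinely delicate point, is the sign bookkeeping. I would combine the coefficient sign $(-1)^{\,n-k}$, the permutation sign $(-1)^{(n-k)-q}$, and the global sign dictated by the chosen convention for $\chi$, and then invoke the identity $p(U) = q + k$ relating the total number of components of $U$ to the number $q$ of its non-isolated components. Collapsing the accumulated powers of $-1$ should leave exactly $(-1)^{p(U)}$, so that regrouping the surviving terms over all $S$ and all admissible $\sigma$ yields $\sum_{U\in\mathscr{U}_k}(-1)^{p(U)}2^{c(U)}$ as the coefficient of $x^k$, which is the claim. The parity arithmetic must be carried out against the fixed sign convention for the characteristic polynomial, since that convention is what determines whether the isolated vertices are to be counted among the components tallied by $p(U)$.
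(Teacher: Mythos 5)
The paper states Sachs' Theorem without proof, citing \cite{SGBook}, so your proposal must be judged against the standard literature argument, which it follows faithfully: the principal-minor expansion of $\det(x\mathbf{I}-\mathbf{A}_G)$, the Leibniz formula on each minor $\det(\mathbf{A}_G[S])$, the observation that only fixed-point-free permutations supported on edges of $G$ survive, the correspondence between permutation cycles and the components of a cycle cover with the $k=n-\lvert S\rvert$ outside vertices adjoined as isolated points, the factor $2^{c(U)}$ from the two orientations of each cycle of length at least $3$, and $\sgn(\sigma)=(-1)^{(n-k)-q}$ with $q$ the number of non-isolated components. All of this is sound.

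The gap is in your final parity collapse. The accumulated sign is $(-1)^{n-k}\cdot(-1)^{(n-k)-q}=(-1)^{q}$, so your expansion yields the coefficient of $x^{k}$ as $\sum_{U\in\mathscr{U}_{k}}(-1)^{q}\,2^{c(U)}$. Your proposed identity $p(U)=q+k$ would turn this into $(-1)^{k}\sum_{U\in\mathscr{U}_{k}}(-1)^{p(U)}2^{c(U)}$, which differs from the claim by $(-1)^{k}$, and your suggested escape --- that the sign convention for $\chi$ decides whether isolated vertices are counted --- cannot work: switching to $\det(\mathbf{A}_G-x\mathbf{I})$ multiplies every coefficient by the fixed sign $(-1)^{n}$, which can never cancel a $k$-dependent factor. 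The paper's own example settles the intended reading: for $K_3$ at $k=1$ the three covers are each an edge plus an isolated vertex and the coefficient is $-3$, forcing $(-1)^{p(U)}=-1$, i.e.\ $p(U)=q=1$ rather than $q+k=2$; likewise at $k=3$ the all-isolated cover must contribute $+1$, forcing $p(U)=0$. So $p(U)$ counts only the edge and cycle components --- the components of the \emph{reduced} cover, as in Figure \ref{fig:Sachs1} --- and once you replace $p(U)=q+k$ by $p(U)=q$, your computation closes with no further identity needed. (Separately, your argument correctly produces the $k=0$ constant term, which the stated lower limit $k=1$ omits; since the paper's example computes exactly that term, the limit is a typo in the statement rather than a flaw in your proof.)
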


\begin{example}
\label{ex:Sachs1}
Again, consider the complete graph on $3$ vertices $K_3$. It has the following adjacency matrix and characteristic polynomial.
\begin{align*}
\mathbf{A}_{K_3}=\left[
\begin{array}{ccc}
0 & 1 & 1 \\ 
1 & 0 & 1 \\ 
1 & 1 & 0
\end{array}
\right]\\
\chi(\mathbf{A}_{K_3}, x) = x^3 - 3x - 2 
\end{align*}

The coefficients of $\chi(\mathbf{A}_{K_3}, x)$ are determined by cycle-covers. Figure \ref{fig:Sachs1} depicts all the figures for each coefficient of $\chi(\mathbf{A}_{K_3}, x)$.

\begin{figure}[!ht]
    \centering
    \includegraphics{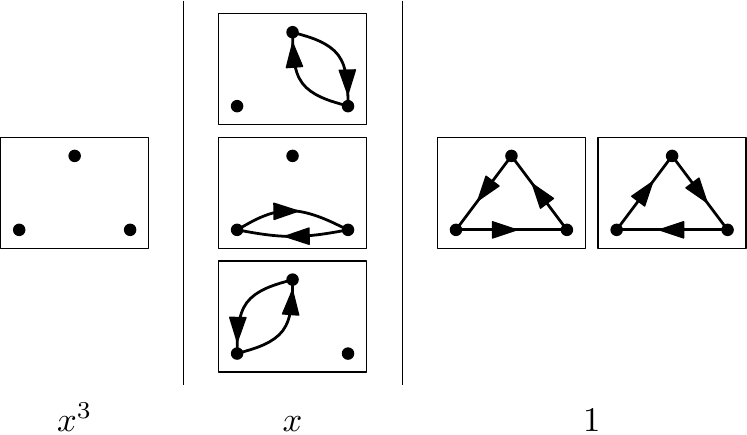}
    \caption{Reduced cycle-covers with $k$ isolated vertices.}
    \label{fig:Sachs1}
\end{figure}

The $3$-cycles are grouped along the reorientation count $(2)^{c(U)}$ calculation. However, the $2$-cycles are actually a closed walk along a single edge and are equivalent to their own reversal, hence are only counted once each.
\end{example}

\subsection{Incidence Hypergraphs}
\label{ssec:IHbackground}

The categorical foundation for incidence theory introduced in \cite{IH1} demonstrated the deficiencies of other graph-like categories \cite{Berge2,dorfler1980, GandH}, culminating with a characterization of (directed) graph exponentials as homomorphisms in the category of incidence structures. A categorical description is critical for study of (hyper)graph homomorphisms as the weak-walk interpretation of the adjacency and Laplacian matrices \cite{AH1,OHHar} is a path homomorphism theorem which derives the combinatorial aspects of the adjacency and Laplacian matrices. Moreover, traditional graph-theoretic or set-system hypergraphic approaches generally fail due to a lack of exponential objects, subobject classifiers, and injective envelopes that do not represent matrix algebra; the injective envelopes for graphs and set-systems are complete graphs and simplicial sets, respectively \cite{IH1, Grill2, Grill1}. 

An incidence hypergraph is a quintuple $G=(\check{V}, \check{E}, I, \varsigma, \omega)$ consisting of a set of vertices $\check{V}$, a set of edges $\check{E}$, a set of incidences $I$, and two incidence maps $\varsigma:I\to\check{V}$ and $\omega:I\to\check{E}$. Note, this notation is from \cite{IH1}, where it was demonstrated that the ``vertex set'' is a functor into $\Set$ relative to each category. The set decorations distinguish between such functors; for example, $\check{V}(G)$ is the set of vertices of an incidence hypergraph, while $V(G)$ is the set of vertices of a graph. While there is no appreciable difference on the objects, the differences in the morphisms and adjoints are in \cite{IH1}.

\begin{figure}[!ht]
    \centering
    \includegraphics{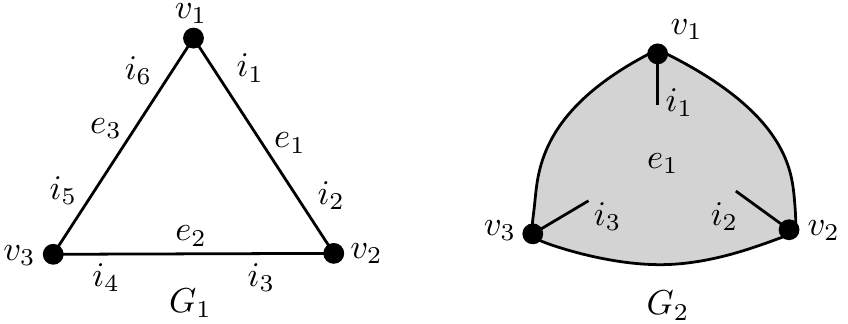}
    \caption{Example objects in the category of incidence hypergraphs: a $K_3$ graph regarded as an incidence structure and a single $3$-edge.}
    \label{fig:K3Graph}
\end{figure}

Formally, an incidence hypergraph (from \cite[p.\ 17]{IH1}) is defined as follows:
Let $\cat{D}$ be the finite category
\[\xymatrix{
0	&	2\ar[l]_{y}\ar[r]^{z}	&	1\\
}\]
and the category of incidence hypergraphs is $\cat{R}:=\Set^{\cat{D}}$ with evaluation functors $\xymatrix{\Set & \cat{R}\ar@/_/[l]_(0.4){\check{V}}\ar@/^/[l]^(0.4){\check{E}}\ar[r]^(0.4){I} & \Set}$ at $0$, $1$, and $2$, respectively.  As discussed in \cite{IH1}, the category of incidence hypergraphs $\cat{R}$ is most closely related to the category of quivers $\cat{Q}$, as there is an atomic geometric surjection between them. An object $G$ of $\cat{R}$ consists of the following:  a set $\check{V}(G)$, a set $\check{E}(G)$, a set $I(G)$, a function $\varsigma_{G}:I(G)\to\check{V}(G)$, and a function $\omega_{G}:I(G)\to\check{E}(G)$. Note that the standard $(v,e)$ incidence function $\iota_{G}: I(G)\rightarrow\check{V}(G)\times\check{E}(G)$ used in \cite{OHSachs,OHMTT} is the unique factorization of $\varsigma_{G}$ and $\omega_{G}$ through the cartesian product.

A \emph{directed path of length $n/2$} is a non-repeating sequence 
\begin{equation*}
\overrightarrow{P}_{n/2}=(a_{0},i_{1},a_{1},i_{2},a_{2},i_{3},a_{3},...,a_{n-1},i_{n},a_{n})
\end{equation*}
of vertices, edges, and incidences, where $\{a_{\ell }\}$ is an alternating sequence of vertices and edges, and $i_{j}$ is an incidence between $a_{j-1}$ and $a_{j}$. The \emph{tail} of a path is $a_0$ and the \emph{head} of a path is $a_n$. In terms of paths, the generators of $\cat{R}$ are the path of length zero consisting of a single vertex, the path of length zero consisting of a single edge, and the $1$-edge. The $1$-edge generator is critical to the structure theorems as it is also terminal, and allows for the formation of weak walks.

A \emph{directed weak walk of $G$} is the image of an incidence-preserving map of a directed path into $G$. A \emph{backstep of }$G$ is a non-incidence-monic map of $\overrightarrow{P}_{1}$ into $G$; a \emph{loop of }$G$ is an incidence-monic map of $\overrightarrow{P}_{1}$ into $G$ that is not vertex-monic; and a \emph{directed adjacency of }$G$ is a map of $ \overrightarrow{P}_{1}$ into $G$ that is incidence-monic. Observe that loops are considered adjacencies while backsteps are not, and can respectively be regarded as orientable and non-orientable $1$-cycles. A \emph{contributor of $G$} is an incidence preserving map from a disjoint union of $\overrightarrow{P}_{1}$'s with tail $t$ and head $h$ into $G$ defined by $c:\dcoprod \limits_{v\in V}\overrightarrow{P}_{1}\rightarrow G$ such that $c(t_{v})=v$ and $\{c(h_{v})\mid v\in V\}=V$. A contributor can be regarded as a permutation clone that lives in the incidence hypergraph, with backsteps and loops playing the role of fixed points. Since there are often many contributors whose head-tail mapping produce the same permutation, two contributors that correspond to the same permutation are called \emph{permutomorphic}. Let $\mathcal{C}(G)$ denote the set of contributors of $G$. A \emph{strong contributor} is an incidence-monic contributor, and let $\mathcal{S}(G)$ denote the set of strong contributors. Two contributors are said to be \emph{tail-equivalent} is their tail images agree.

\begin{example}
Figure \ref{Fig:Hypergraph3} depicts the set of all contributors for both $G_1$ and $G_2$ from Figure \ref{fig:K3Graph}. Backsteps are depicted as a half-edge whose incidence is traversed twice. The backstep-free contributors are the strong contributors, and are analogous to cycle-covers for the adjacency matrix. However, the set of all contributors are analogous to cycle-covers for the Laplacian matrix.

\begin{figure}[H]
    \centering
    \includegraphics{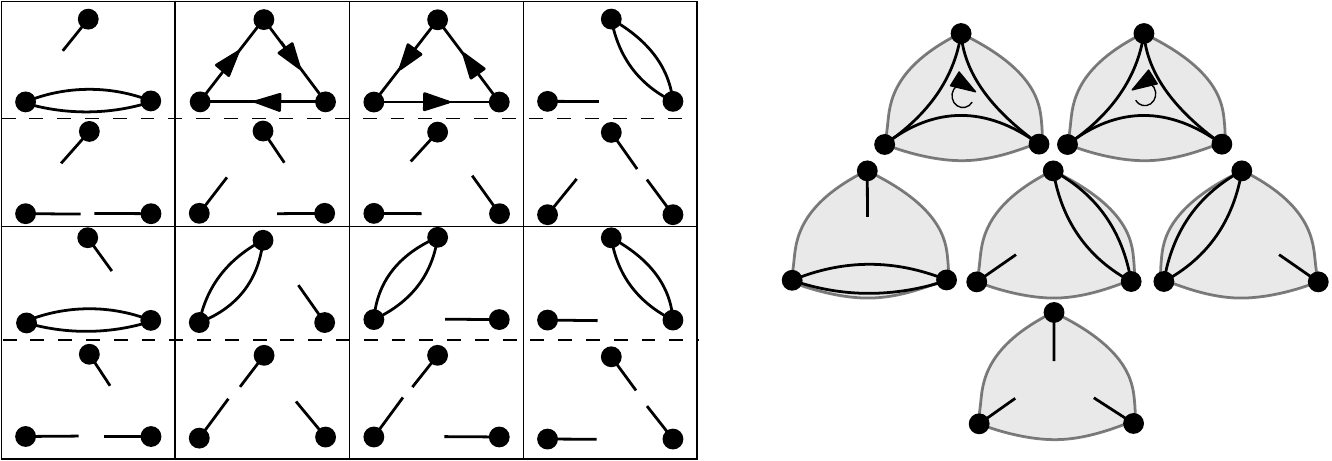}
    \caption{Contributors of $G_1$ and $G_2$ from Figure \ref{fig:K3Graph}, grouped by their tail-equivalent families.}
    \label{Fig:Hypergraph3}
\end{figure}

The contributors are collected into their tail-equivalent classes, with the identity permutation clone as the bottom element (containing only backsteps). The grouped elements are separated by a dashed line in the left figure, while the right figure only has one class.
\end{example}

\subsection{Oriented Hypergraphs}
\label{ssec:OHBackground}

An \emph{orientation of an incidence hypergraph G} is a signing function $\sigma:I\rightarrow\{+1,-1\}$. The \emph{sign of a weak walk} $W$ is 
\begin{equation*}
\sgn(W)=(-1)^{\lfloor n/2\rfloor }\prod_{h=1}^{n}\sigma (i_{h})\text{,}
\end{equation*}
which is equivalent to taking the product of the signed adjacencies if $W$ is a vertex-walk. Extroverted/introverted adjacencies are negative while two incidences that compatibly traverse an adjacency are positive; see \cite{MR0267898, SG, OSG} for bidirected graphs as orientations of signed graphs.

\begin{figure}[ht!]
    \centering
    \includegraphics{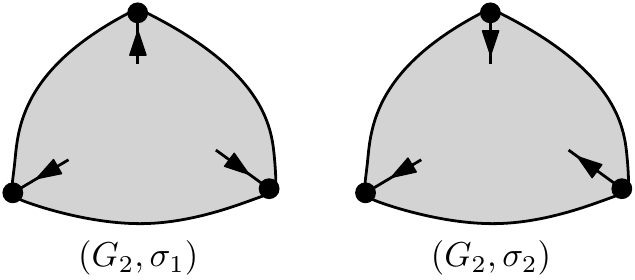}
    \caption{Two incidence orientations of the $3$-edge graph $G_2$ from Figure \ref{fig:K3Graph}, with $\sigma_1$ having all adjacencies negative.}
    \label{fig:2Orientations}
\end{figure}

The \emph{incidence matrix} of an oriented hypergraph $G$ is the $V \times E$ matrix $\mathbf{H}_{G}$ where the $(v,e)$-entry is the sum of $\sigma
(i)$ for each $i \in I$ such that $\varsigma (i)=v$ and $\omega (i)=e$. The \emph{adjacency matrix} $\mathbf{A}_{G} $ of an oriented hypergraph $G$ is the $V\times V$ matrix whose $(u,w)$-entry is the sum of $\sgn(q(\overrightarrow{P}_{1}))$ for all incidence monic maps $q:\overrightarrow{P}_{1}\rightarrow G$ with $q(\varsigma(i_1))=u$ and $q(\varsigma(i_2))=w$. The \emph{%
degree matrix} of an oriented hypergraph $G$ is the $V\times V$ diagonal
matrix whose $(v,v)$ -entry is the sum of all non-incidence-monic maps $p:\overrightarrow{P}_{1}\rightarrow G$ with $p(\varsigma(i_1))=p(\varsigma(i_2)=v$.
The \emph{Laplacian matrix of $G$} is defined as $\mathbf{L}_{G}:=\mathbf{H}_{G} \mathbf{H}_{G}^{T}=\mathbf{D}_{G}-\mathbf{A}_{G}$ for all oriented hypergraphs, see \cite{AH1} for the result that the Laplacian is the 1-weak-walk matrix.

The Laplacians of the two oriented hypergraphs in Figure \ref{fig:2Orientations} are
$$\mathbf{L}_{(G_2,\sigma_1)}=\left[
\begin{array}{ccc}
1 & 1 & 1 \\ 
1 & 1 & 1 \\ 
1 & 1 & 1 %
\end{array}
\right], \hspace{1em}
\mathbf{L}_{(G_2,\sigma_2)}=\left[
\begin{array}{ccc}
1 & 1 & -1 \\ 
1 & 1 & -1 \\ 
-1 & -1 & 1 %
\end{array}
\right],$$
where introverted/extroverted adjacencies are regarded as negative, and $\sigma_1$ corresponding the the signless Laplacian. Since incidence hypergraphs can be regarded as an oriented hypergraph with a constant orientation functions, incidence hypergraphs alone naturally model the signless Laplacian, see \cite{OHMTT}.

Generalizations of Sachs' Theorem and the permanental polynomial to signed graphs appear in \cite{Sim1}, and Theorem \ref{OHSachsT} below is from \cite[Theorem 4.2.1]{OHSachs} and generalizes these results to oriented hypergraphs and integer matrices. Let $\chi ^{D}(\mathbf{M},x):=\det (x\mathbf{I-M)}$ be the determinant-based
characteristic polynomial and $\chi ^{P}(\mathbf{M},x):=\mathrm{perm}(x%
\mathbf{I-M)}$ be the permanent-based characteristic polynomial. 
\begin{theorem}[\cite{OHSachs}, Theorem 4.2.1]
\label{OHSachsT}
Let $G$ be an oriented hypergraph with adjacency matrix $\mathbf{A}_{G}$ and Laplacian matrix $\mathbf{L}_{G}$, then

\begin{enumerate}
\item $\chi ^{P}(\mathbf{A}_{G},x)=\dsum\limits_{k=0}^{\left\vert
V\right\vert }\left( \dsum\limits_{c\in \hat{\mathcal{C}}_{=k}(G)}(-1)^{oc(c)+nc(c)}\right) x^{k}$,

\item $\chi ^{D}(\mathbf{A}_{G},x)=\dsum\limits_{k=0}^{\left\vert
V\right\vert }\left( \dsum\limits_{c\in \hat{\mathcal{C}}%
_{=k}(G)}(-1)^{pc(c)}\right) x^{k}$,

\item $\chi ^{P}(\mathbf{L}_{G},x)=\dsum\limits_{k=0}^{\left\vert
V\right\vert }\left( \dsum\limits_{c\in \hat{\mathcal{C}}_{\geq
k}(G)}(-1)^{nc(c)+bs(c)}\right) x^{k}$,

\item $\chi ^{D}(\mathbf{L}_{G},x)=\dsum\limits_{k=0}^{\left\vert
V\right\vert }\left( \dsum\limits_{c\in \hat{\mathcal{C}}_{\geq
k}(G)}(-1)^{ec(c)+nc(c)+bs(c)}\right) x^{k}$.
\end{enumerate}
Where $bs(c)$ is the number of backsteps in contributor $c$, $oc(c)/ ec(c)/nc(c)/pc(c)$ is the number of odd/even/posititve/negative circles in $c$, $\hat{\mathcal{C}}_{=k}(G)$ is the set of contributors with exactly $k$ backsteps and $k$ removed, and $\hat{\mathcal{C}}_{\geq k}(G)$ is the set of contributors with $k$ or more backsteps and $k$ removed.
\end{theorem}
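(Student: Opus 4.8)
The plan is to reduce everything to the Leibniz-type expansions of the determinant and permanent of $x\mathbf{I}-\mathbf{M}$ and then read off each monomial coefficient as a signed count of contributors. For a $V\times V$ matrix $\mathbf{M}$ one has $\det(x\mathbf{I}-\mathbf{M})=\sum_{\pi}\sgn(\pi)\prod_{v}(x\mathbf{I}-\mathbf{M})_{v,\pi(v)}$ and $\perm(x\mathbf{I}-\mathbf{M})=\sum_{\pi}\prod_{v}(x\mathbf{I}-\mathbf{M})_{v,\pi(v)}$, the two differing only by the factor $\sgn(\pi)$. First I would expand the product over $v$: each off-diagonal factor is a single matrix entry, while each fixed point of $\pi$ contributes the diagonal factor $x-\mathbf{M}_{vv}$, from which we select either the bookkeeping variable $x$ (a \emph{removed} vertex) or the entry $-\mathbf{M}_{vv}$. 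Since every entry of $\mathbf{A}$ is itself a signed sum over incidence-monic maps $\overrightarrow{P}_1\to G$ (adjacencies, and loops on the diagonal), while the diagonal of $\mathbf{L}=\mathbf{D}-\mathbf{A}$ additionally records the backstep count $\mathbf{D}_{vv}$, distributing the product over all these sums turns each surviving term into a choice of one $\overrightarrow{P}_1$-map per non-removed vertex whose head-assignment realizes $\pi$ --- that is, a contributor of $G$ on the non-removed vertices. Thus the coefficient of $x^k$ becomes a signed sum over contributors with $k$ vertices deleted, and it remains only to match the signs and to pin down which contributors occur.

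For the sign calculation I would first record three local facts. A single adjacency $q(\overrightarrow{P}_1)$ carries $\sgn(q)=(-1)^{\lfloor 2/2\rfloor}\sigma(i_1)\sigma(i_2)=-\sigma(i_1)\sigma(i_2)$, a backstep carries $(-1)\sigma(i)^2=-1$, and an $\ell$-cycle of $\pi$ carries $\sgn=(-1)^{\ell-1}$. Writing $P(C)=\prod_{i\in C}\sigma(i)$ for a circle $C$ of length $\ell$ and declaring $C$ positive or negative according as its adjacency product $(-1)^{\ell}P(C)$ is $+1$ or $-1$, one gets the per-circle identity $P(C)=(-1)^{[\ell\ \mathrm{odd}]+[C\ \mathrm{neg}]}$, whence $\prod_j P(C_j)=(-1)^{oc(c)+nc(c)}$; similarly $\sgn(\pi)=\prod_j(-1)^{\ell_j-1}=(-1)^{ec(c)}$, since only even circles contribute $-1$ and every fixed point contributes $+1$. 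Multiplying these against the explicit signs carried by the matrix entries then yields each exponent in turn: for $\mathbf{A}$ the off-diagonal entry of $x\mathbf{I}-\mathbf{A}$ is $-\mathbf{A}_{vw}$, so a circle contributes $P(C)$ and the permanent gives $(-1)^{oc+nc}$, while the extra $\sgn(\pi)=(-1)^{ec}$ in the determinant converts this to $(-1)^{pc}$; for $\mathbf{L}$ the off-diagonal entry is $+\mathbf{A}_{vw}$, each backstep contributes an extra $-1$, and the same accounting produces $(-1)^{nc+bs}$ for the permanent and $(-1)^{ec+nc+bs}$ for the determinant. The two arithmetic identities $\sum_j\ell_j\equiv oc\pmod 2$ and $oc+ec=pc+nc$ are exactly what collapse the stray length-parities, and I would isolate them as the lemmas doing all the real work.

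The final ingredient is identifying the admissible family of contributors attached to each power of $x$, which is where adjacency and Laplacian part ways. Since every entry of $\mathbf{A}$ counts only incidence-monic maps, the diagonal $x-\mathbf{A}_{vv}$ offers only loops or removal, so the only backsteps in a term are the removed vertices themselves; the coefficient of $x^k$ therefore ranges over $\hat{\mathcal{C}}_{=k}(G)$, contributors with exactly $k$ backsteps all of which are removed. For the Laplacian the diagonal is $x-\mathbf{L}_{vv}=x-\mathbf{D}_{vv}+\mathbf{A}_{vv}$, and since $-\mathbf{D}_{vv}=\sum_{\beta}\sgn(\beta)$ sums the signed backsteps at $v$, each non-removed fixed point may independently be realized as a loop or a backstep; adjoining these to the $k$ removed backstep-vertices forces at least $k$ backsteps in all, so the coefficient of $x^k$ ranges over $\hat{\mathcal{C}}_{\geq k}(G)$. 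I would make this precise as a bijection between the surviving expansion terms and pairs consisting of a contributor together with a choice of $k$ of its backsteps to delete.

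I expect the main obstacle to be the sign bookkeeping rather than the expansion itself: keeping the permutation sign, the per-adjacency signs $-\sigma(i_1)\sigma(i_2)$, the explicit off-diagonal signs (which differ between $\mathbf{A}$ and $\mathbf{L}$), and the backstep signs all aligned, and then showing they collapse --- via $\sum_j\ell_j\equiv oc$ and $oc+ec=pc+nc$ --- to the clean exponents $oc+nc$, $pc$, $nc+bs$, and $ec+nc+bs$. A secondary subtlety is the exact combinatorial meaning of the deletion defining $\hat{\mathcal{C}}_{=k}$ and $\hat{\mathcal{C}}_{\geq k}$, in particular ensuring the removed vertices are counted as backsteps without over- or under-counting the loop-versus-backstep choices on the remaining fixed points.
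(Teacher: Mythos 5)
Your proposal is correct and follows essentially the same route as the paper: the paper cites this result from \cite{OHSachs} and reuses exactly this argument (Leibniz expansion of $\det/\perm(x\mathbf{I}-\mathbf{M})$, the diagonal choice of $x$ versus entry, recognition of the surviving terms as contributors, and the sign collapse via factoring $-1$ per adjacency to get $(-1)^{oc}$, per negative circle to get $(-1)^{nc}$, per backstep for $\mathbf{L}$, and $\sgn(\pi)=(-1)^{ec}$ for the determinant) in its proof of the generalization, Theorem \ref{t:Main}. Your two bookkeeping identities $\sum_j \ell_j \equiv oc \pmod 2$ and $oc+ec=pc+nc$, and the pairing of expansion terms with contributors carrying $k$ marked backsteps, match the paper's accounting precisely.
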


We improve upon this theorem and prove a multivariate all-minor generalization that unifies Sachs' theorem and the Matrix-tree theorem to integer incidence matrices through the locally signed graphic contributors of their associated oriented hypergraph. Moreover, we exhibit that these types of theorems are a result of the category of incidence hypergraphs being a topos and tied intimately to the subobject classifier and the injective envelope --- leaving open the possibility of having a purely algebraic formulation of matrix-tree-like theorems.

\section{Subobjects \& Injective Envelopes}
\label{sec:Cat}

\subsection{Partial Morphism Representer}

As a presheaf topos, $\cat{R}$ possesses many well-documented categorical properties:  completeness, cocompleteness \cite[Corollary I.2.15.4]{borceux}, a subobject classifier \cite[Lemma A1.6.6]{elephant1}, and partial morphism representers \cite[Proposition A2.4.7]{elephant1}.  Most important for this discussion will be the partial morphism representer, which will be used to characterize injective objects and the injective envelope.  Since the partial morphism representer is a generalization of the subobject classifier, both will be developed together.

The subobject classifier $\Omega_{\cat{R}}$ acts like a generalization of the 2-element set $\{0,1\}$, where 1 serves as ``true'' and 0 as ``false''.  General constructions involve sieves \cite[p.\ 37-39]{maclane2} or subfunctors \cite[Example III.5.2.5]{borceux}, but the following construction will be done concretely with sets.  The first component of this structure is the terminal object of the category, which will generates the ``true'' components for the subobject classifier.  Using the adjoints of $I$ \cite[p.\ 18]{IH1}, both the terminal and initial object can be identified immediately.

\begin{defn}[Initial \& terminal]
Let $\mathbb{0}_{\cat{R}}:=I^{\diamond}(\emptyset)$ and $\mathbb{1}_{\cat{R}}:=I^{\star}\left(\{1\}\right)$.  As $I^{\diamond}$ is cocontinuous and $\emptyset$ is initial in $\Set$, $\mathbb{0}_{\cat{R}}$ is initial in $\cat{R}$.  As $I^{\star}$ is continuous and $\{1\}$ is terminal in $\Set$, $\mathbb{1}_{\cat{R}}$ is terminal in $\cat{R}$.
\end{defn}

Next, the partial morphism representer of an incidence hypergraph $G$ will be built by extending the structure of $G$.  The original structure of $G$ will serve as the ``true'' values, while the extended structure will serve as the ``false'' values:  a new vertex, a new edge, and new incidences between every vertex and edge.  This process is functorial, and application of this process to $\mathbb{1}_{\cat{R}}$ produces the ``false'' components for the subobject classifier.

\begin{defn}[Partial morphism representer construction]
For an incidence hypergraph $G$, define an incidence hypergraph $\tilde{G}$ by
\begin{enumerate}
\item $\check{V}\left(\tilde{G}\right):=\left(\{1\}\times\check{V}(G)\right)\cup\left\{(0,0)\right\}$, $\check{E}\left(\tilde{G}\right):=\left(\{1\}\times\check{E}(G)\right)\cup\left\{(0,0)\right\}$,
\item $I\left(\tilde{G}\right):=\left(\{1\}\times I(G)\right)\cup\left(\{0\}\times\check{V}\left(\tilde{G}\right)\times\check{E}\left(\tilde{G}\right)\right)$,
\item $\varsigma_{\tilde{G}}(a):=\left\{\begin{array}{cc}
\left(1,\varsigma_{G}(i)\right),	&	a=(1,i),\\
v,	&	a=(0,v,e),\\
\end{array}\right.$
$\omega_{\tilde{G}}(a):=\left\{\begin{array}{cc}
\left(1,\omega_{G}(i)\right),	&	a=(1,i),\\
e,	&	a=(0,v,e).\\
\end{array}\right.$
\end{enumerate}
Define the incidence hypergraph homomorphism $\xymatrix{G\ar[r]^{\eta_{G}} & \tilde{G}}$ by $\check{V}\left(\eta_{G}\right)(v):=(1,v)$, $\check{E}\left(\eta_{G}\right)(e):=(1,e)$, and $I\left(\eta_{G}\right)(i):=(1,i)$.  Note that $\eta_{G}$ is monic by \cite[Corollary I.2.15.3]{borceux}.
\end{defn}

\begin{theorem}[Partial morphism representer characterization]
If $\xymatrix{K & \textrm{ }H\ar@{>->}[l]_{\phi}\ar[r]^{\psi} & G}$ are incidence hypergraph homomorphisms where $\phi$ is monic, there is a unique incidence hypergraph homomorphism $\xymatrix{K\ar[r]^{\hat{\psi}} & \tilde{G}}$ such that $\xymatrix{K & \textrm{ }H\ar@{>->}[l]_{\phi}\ar[r]^{\psi} & G}$ is a pullback of $\xymatrix{K\ar[r]^{\hat{\psi}} & \tilde{G} & \textrm{ }G\ar@{>->}[l]_(0.4){\eta_{G}}}$.  Consequently, $\tilde{G}$ equipped with $\eta_{G}$ is a partial morphism representer of $G$.
\end{theorem}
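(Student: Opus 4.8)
The plan is to work componentwise, exploiting that $\cat{R}=\Set^{\cat{D}}$ is a presheaf topos, so that both monomorphisms and pullbacks are computed at each of the three evaluation functors $\check{V}$, $\check{E}$, $I$ in $\Set$. Since $\phi$ is monic, the three component maps $\check{V}(\phi)$, $\check{E}(\phi)$, $I(\phi)$ are injective, so I may treat $H$ as a subobject of $K$ at each level and partition each of $\check{V}(K)$, $\check{E}(K)$, $I(K)$ into the part lying in the image of $\phi$ and its complement. This reduces the statement to the concrete componentwise realization of the standard presheaf partial-map classifier.

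First I would construct $\hat{\psi}$ explicitly. On the image of $\phi$ the map must land in the ``true'' copy $\{1\}\times(-)$ of $G$ inside $\tilde{G}$, transported by $\psi$: send $\check{V}(\phi)(w)\mapsto(1,\check{V}(\psi)(w))$ and likewise for edges and incidences (well defined by injectivity of $\phi$). On the complement, send every vertex to the false vertex $(0,0)$, every edge to the false edge $(0,0)$, and each incidence $i$ to the false incidence $(0,\check{V}(\hat{\psi})(\varsigma_{K}(i)),\check{E}(\hat{\psi})(\omega_{K}(i)))$. The reason $\tilde{G}$ is built with the complete set of false incidences $\{0\}\times\check{V}(\tilde{G})\times\check{E}(\tilde{G})$ is precisely that this last assignment always has a (unique) target, regardless of where the endpoints of $i$ are sent.

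Next I would check that $\hat{\psi}$ is a homomorphism and that the resulting square is a pullback. Homomorphy is a routine case split: on the image of $\phi$ it follows from $\phi$ and $\psi$ being homomorphisms, and on the complement it holds by the very definition of the false incidences. For the pullback, since pullbacks are computed componentwise in $\Set$, I compute the fiber product of $\check{V}(\hat{\psi})$ with $\check{V}(\eta_{G})$ (and similarly at $\check{E}$ and $I$); the ``true'' copy of $G$ forces the pulled-back elements to be exactly those of $K$ in the image of $\phi$, identifying the fiber product with $\check{V}(H)$ compatibly with the two projections $\check{V}(\phi),\check{V}(\psi)$. The incidence level is the only one needing care, but the false incidences never lie in the image of $\eta_{G}$, so the fiber product again collapses onto the image of $I(\phi)$.

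Finally, uniqueness. Given any competitor $\hat{\psi}'$ making the square a pullback, the pullback property forces an element of $K$ to map into the image of $\eta_{G}$ precisely when it lies in the image of $\phi$, and commutativity of the square then fixes its value via $\psi$; everything else must map into the ``false'' region. For vertices and edges this determines $\hat{\psi}'$ outright, since there is a single false vertex and a single false edge. For incidences, the homomorphism conditions $\varsigma_{\tilde{G}}\circ I(\hat{\psi}')=\check{V}(\hat{\psi}')\circ\varsigma_{K}$ and its $\omega$-analogue pin down which false incidence $(0,v',e')$ must be used, because the false incidences are indexed bijectively by pairs $(v',e')$. Hence $\hat{\psi}'=\hat{\psi}$. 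I expect this incidence-level bookkeeping --- ensuring that the homomorphism constraints uniquely select a false incidence --- to be the main obstacle, and it is exactly what the completeness of the false-incidence set is engineered to resolve.
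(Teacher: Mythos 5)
Your proposal is correct and takes essentially the same approach as the paper: you construct $\hat{\psi}$ by exactly the paper's case split (transporting the image of $\phi$ into the ``true'' copy $\{1\}\times G$ via $\psi$, and sending everything else to the false vertex, false edge, or the false incidence $\left(0,\check{V}(\hat{\psi})(\varsigma_{K}(i)),\check{E}(\hat{\psi})(\omega_{K}(i))\right)$). The componentwise pullback computation and the uniqueness argument via the bijective indexing of false incidences by vertex--edge pairs are precisely the ``routine checks'' the paper leaves implicit, so your write-up supplies those details correctly rather than diverging from the paper's method.
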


\begin{proof}

Define $\xymatrix{K\ar[r]^{\hat{\psi}} & \tilde{G}}\in\cat{R}$ by
\begin{itemize}
\item $\check{V}\left(\hat{\psi}\right)(v):=\left\{\begin{array}{cc}
\left(1,\check{V}(\psi)(w)\right),	&	v=\check{V}(\phi)(w),\\
\left(0,0\right),	&	\textrm{otherwise},\\
\end{array}\right.$
$\check{E}\left(\hat{\psi}\right)(e):=\left\{\begin{array}{cc}
\left(1,\check{E}(\psi)(f)\right),	&	e=\check{E}(\phi)(f),\\
\left(0,0\right),	&	\textrm{otherwise},\\
\end{array}\right.$
\item $I\left(\hat{\psi}\right)(i):=\left\{\begin{array}{cc}
\left(1,I(\psi)(j)\right),	&	i=I(\phi)(j),\\
\left(0,\check{V}\left(\hat{\psi}\right)\left(\varsigma_{K}(i)\right),\check{E}\left(\hat{\psi}\right)\left(\omega_{K}(i)\right)\right),	&	\textrm{otherwise}.\\
\end{array}\right.$
\end{itemize}
As $\phi$ is monic, $\hat{\psi}$ is well-defined.  Routine checks show the pullback condition and uniqueness of $\hat{\psi}$.\qed

\end{proof}

\begin{corollary}[Functor $\tilde{\Box}$]
If $\xymatrix{G\ar[r]^{\phi} & H}$ is an incidence hypergraph homomorphism, then the incidence hypergraph homomorphism $\xymatrix{\tilde{G}\ar[r]^{\tilde{\phi}} & \tilde{H}}$ is given by
\begin{enumerate}
\item $\check{V}\left(\tilde{\phi}\right)(n,x):=\left\{\begin{array}{cc}
\left(1,\check{V}(\phi)(x)\right),	&	n=1,\\
(0,0),	&	n=0,\\
\end{array}\right.$
$\check{E}\left(\tilde{\phi}\right)(n,y):=\left\{\begin{array}{cc}
\left(1,\check{E}(\phi)(y)\right),	&	n=1,\\
(0,0),	&	n=0,\\
\end{array}\right.$
\item $I\left(\tilde{\phi}\right)(n,z):=\left\{\begin{array}{cc}
\left(1,I(\phi)(z)\right),	&	n=1,\\
\left(0,\check{V}\left(\tilde{\phi}\right)\left(\varsigma_{\tilde{G}}(z)\right),\check{E}\left(\tilde{\phi}\right)\left(\omega_{\tilde{G}}(z)\right)\right),	&	n=0.\\
\end{array}\right.$
\end{enumerate}
\end{corollary}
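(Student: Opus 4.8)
The statement is really the assertion that the partial morphism representer construction is functorial, together with an explicit formula for its action on arrows, so the plan is to \emph{derive} $\tilde{\phi}$ from the universal property just established rather than to verify the homomorphism and functor axioms by hand. Given $\phi:G\to H$, observe that the span $\tilde{G}\xleftarrow{\eta_{G}}G\xrightarrow{\phi}H$ is a partial morphism from $\tilde{G}$ to $H$, since $\eta_{G}$ is monic. Applying the Partial morphism representer characterization with $K:=\tilde{G}$, with the monic $\eta_{G}$ playing the role of $\phi$, and with $\phi$ playing the role of $\psi$, yields a \emph{unique} homomorphism $\tilde{G}\to\tilde{H}$ whose pullback along $\eta_{H}$ recovers this span. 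I would take this classifying map to be the definition of $\tilde{\phi}$; because it is produced by the theorem, it is automatically a morphism of $\cat{R}$, so no separate compatibility check with $\varsigma_{\tilde{H}}$ and $\omega_{\tilde{H}}$ is required.

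To confirm that this classifying map is the one written in items (1) and (2), I would simply substitute $\eta_{G}$ for $\phi$ and $\phi$ for $\psi$ into the formula for $\hat{\psi}$ from the theorem's proof. Since $\check{V}(\eta_{G})(w)=(1,w)$, the condition ``$v=\check{V}(\phi)(w)$'' in that formula becomes ``$v=(1,w)$'', i.e.\ the $n=1$ clause, and the complementary ``otherwise'' becomes the single remaining vertex $(0,0)$, i.e.\ the $n=0$ clause; this is exactly item (1), and the computation for edges is identical. For incidences, the recursive ``otherwise'' branch of $\hat{\psi}$ is verbatim the $n=0$ branch of item (2), now reading $\varsigma_{\tilde{G}},\omega_{\tilde{G}}$ in place of $\varsigma_{K},\omega_{K}$. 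In passing this also records the naturality square $\tilde{\phi}\circ\eta_{G}=\eta_{H}\circ\phi$, which follows by evaluating the formula on $(1,x)$.

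For the functor axioms I would invoke uniqueness in the representer theorem twice. Preservation of identities is immediate: the span $\tilde{G}\xleftarrow{\eta_{G}}G\xrightarrow{\mathrm{id}_{G}}G$ is classified both by $\widetilde{\mathrm{id}_{G}}$ by definition and by $\mathrm{id}_{\tilde{G}}$ (as $\eta_{G}$ is its own pullback along the identity), so the two agree. For composition, given $\phi:G\to H$ and $\psi:H\to K$, I would paste pullbacks: pulling $\eta_{K}$ back along $\tilde{\psi}$ returns the span $\tilde{H}\xleftarrow{\eta_{H}}H\xrightarrow{\psi}K$ by the defining property of $\tilde{\psi}$, and then pulling $\eta_{H}$ back along $\tilde{\phi}$ returns $\tilde{G}\xleftarrow{\eta_{G}}G\xrightarrow{\phi}H$; the pasted rectangle exhibits $\tilde{\psi}\circ\tilde{\phi}$ as classifying $\tilde{G}\xleftarrow{\eta_{G}}G\xrightarrow{\psi\circ\phi}K$, which is precisely the span classified by $\widetilde{\psi\circ\phi}$, so the two coincide. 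The main obstacle is this pasting step: one must set up the two pullback squares in the correct orientation and confirm that the composite right-hand leg is $\psi\circ\phi$, since a misalignment there is the only place the argument can fail. A fully explicit alternative, checking the three component formulas against $\varsigma_{\tilde{H}},\omega_{\tilde{H}}$ and then computing the composite directly, is available but must handle the self-referential $n=0$ incidence clause with care.
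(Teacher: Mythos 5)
Your proposal is correct and matches the paper's (implicit) argument: the corollary is obtained exactly by applying the partial morphism representer characterization to the span $\tilde{G}\xleftarrow{\eta_{G}}G\xrightarrow{\phi}H$ and reading off the formula for $\hat{\psi}$ with $\eta_{G}$ in the role of the monic, which yields items (1) and (2) verbatim. Your additional verification of the functor axioms via uniqueness and pullback pasting is sound and simply makes explicit what the paper leaves to the reader in labeling $\tilde{\Box}$ a functor.
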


\begin{corollary}[Subobject classifier $\Omega_{\cat{R}}$]
The incidence hypergraph $\Omega_{\cat{R}}:=\tilde{\mathbb{1}}_{\cat{R}}$ equipped with $t_{\cat{R}}:=\eta_{\mathbb{1}_{\cat{R}}}$ is a subobject classifier for $\cat{R}$.
\begin{figure}[ht!]
    \centering
    \includegraphics{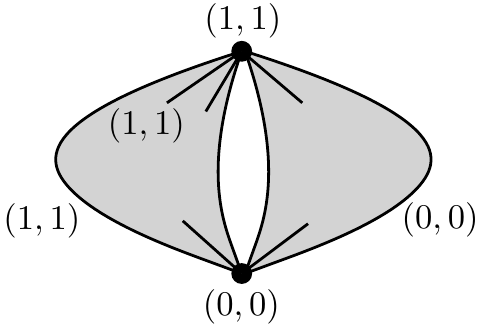}
    \caption{The subobject classifier $\Omega_{\cat{R}}$, with all variants of ``true'' and ``false'' labelled.}
    \label{fig:SubObClass}
\end{figure}
\end{corollary}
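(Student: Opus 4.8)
The plan is to recognize that a subobject classifier is nothing but the partial morphism representer of the terminal object, so that this corollary becomes an immediate specialization of the Partial morphism representer characterization theorem. The key observation is that partial morphisms into $\mathbb{1}_{\cat{R}}$ carry the same data as subobjects: a partial morphism from an object $K$ into $\mathbb{1}_{\cat{R}}$ is a span consisting of a monomorphism $\phi \colon H \hookrightarrow K$ and a morphism $\psi \colon H \to \mathbb{1}_{\cat{R}}$, but since $\mathbb{1}_{\cat{R}}$ is terminal the leg $\psi$ is forced to be the unique morphism $!_{H}$, leaving only the choice of the subobject $\phi$.

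First I would recall from the partial morphism representer construction that $\eta_{G}$ is always monic; in particular $t_{\cat{R}} = \eta_{\mathbb{1}_{\cat{R}}}$ is a monomorphism whose domain $\mathbb{1}_{\cat{R}}$ is terminal, which is exactly the shape of data demanded of a truth arrow. Next I would instantiate the characterization theorem at $G = \mathbb{1}_{\cat{R}}$. Given any monomorphism $\phi \colon H \hookrightarrow K$, set $\psi := !_{H}$; the theorem then produces a unique morphism $\hat{\psi} \colon K \to \tilde{\mathbb{1}}_{\cat{R}} = \Omega_{\cat{R}}$ for which the given span is the pullback of $\hat{\psi}$ against $\eta_{\mathbb{1}_{\cat{R}}} = t_{\cat{R}}$. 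Taking $\hat{\psi}$ as the characteristic morphism of $\phi$, this says precisely that the square with legs $\phi$, $!_{H}$, $t_{\cat{R}}$, and $\hat{\psi}$ is a pullback, which is the existence half of the subobject classifier axiom.

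The uniqueness half I would read off directly from the theorem: because $!_{H}$ is the only morphism $H \to \mathbb{1}_{\cat{R}}$, the uniqueness of $\hat{\psi}$ asserted there is verbatim the uniqueness of the characteristic morphism. Hence every subobject of every object of $\cat{R}$ is classified by a unique map into $\Omega_{\cat{R}}$, so $(\Omega_{\cat{R}}, t_{\cat{R}})$ is a subobject classifier.

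The main obstacle has in effect already been discharged by the preceding theorem, whose proof verified both the pullback condition and the uniqueness of $\hat{\psi}$. The only genuinely new point here is the bookkeeping identification of partial maps into the terminal object with subobjects --- that is, checking that collapsing the $\psi$-leg to the forced terminal map $!_{H}$ converts the partial-morphism universal property into the subobject-classifier universal property with no data lost or gained. I expect no computation beyond confirming that the equality $\psi = !_{H}$ is consistent with the explicit formula for $\hat{\psi}$ given in the proof of the characterization theorem.
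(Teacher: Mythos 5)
Your proposal is correct and matches the paper's reasoning: the corollary is stated without a separate proof precisely because it is the standard specialization of the partial morphism representer theorem at $G=\mathbb{1}_{\cat{R}}$, where terminality forces $\psi=!_{H}$ and the unique $\hat{\psi}$ becomes the characteristic morphism. Your bookkeeping identification of partial maps into the terminal object with subobjects, together with the observation that $\eta_{\mathbb{1}_{\cat{R}}}$ is monic with terminal domain, is exactly the intended argument.
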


Please note that $\{0,1\}$ gives a binary choice between ``true'' and ``false'' for $\Set$, but $\Omega_{\cat{R}}$ has many different types of ``true'' and ``false'' for $\cat{R}$.  There is a unique ``true'' vertex, edge, and incidence indictated by the 1's above.  However, while there is a ``false'' vertex and a ``false'' edge, there are four distinct ``false'' incidences.

\subsection{Subobjects}

This section takes a moment to develop the properties of $\Omega_{\cat{R}}$ further before proceeding to injectivity.  As its name implies, the subobject classifier $\Omega_{\cat{R}}$ characterizes substructures of an incidence hypergraph $G$ via homomorphisms from $G$ into $\Omega_{\cat{R}}$ or, equivalently, as global elements of the exponential ${\Omega_{\cat{R}}}^{G}$.  For an incidence hypergraph, a ``global element'' translates to an incidence of the hypergraph.

\begin{lemma}[Global elements]
For an incidence hypergraph $G$, the global elements of $G$ correspond to the elements of $I(G)$.
\end{lemma}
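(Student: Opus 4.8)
The plan is to use the definition of a global element as a morphism out of the terminal object and to unwind this directly in the functor category $\cat{R}=\Set^{\cat{D}}$. A global element of $G$ is a morphism $\mathbb{1}_{\cat{R}}\to G$, so the entire task reduces to computing $\mathrm{Hom}_{\cat{R}}(\mathbb{1}_{\cat{R}},G)$ and exhibiting a natural bijection with $I(G)$.

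First I would make the terminal object explicit. Since limits in $\Set^{\cat{D}}$ are computed pointwise, $\mathbb{1}_{\cat{R}}=I^{\star}(\{1\})$ is the constant-singleton diagram: each of $\check{V}(\mathbb{1}_{\cat{R}})$, $\check{E}(\mathbb{1}_{\cat{R}})$, and $I(\mathbb{1}_{\cat{R}})$ is a one-element set, say $\{v_{0}\}$, $\{e_{0}\}$, and $\{i_{0}\}$, with $\varsigma_{\mathbb{1}_{\cat{R}}}(i_{0})=v_{0}$ and $\omega_{\mathbb{1}_{\cat{R}}}(i_{0})=e_{0}$ being the only available maps. Thus the terminal object is a single incidence joining a single vertex to a single edge.

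Next I would unwind what a morphism $\phi\colon\mathbb{1}_{\cat{R}}\to G$ is. As an arrow in $\Set^{\cat{D}}$ it is a triple of set maps $\check{V}(\phi)$, $\check{E}(\phi)$, $I(\phi)$ subject to the two naturality conditions $\varsigma_{G}\circ I(\phi)=\check{V}(\phi)\circ\varsigma_{\mathbb{1}_{\cat{R}}}$ and $\omega_{G}\circ I(\phi)=\check{E}(\phi)\circ\omega_{\mathbb{1}_{\cat{R}}}$. Because $I(\mathbb{1}_{\cat{R}})=\{i_{0}\}$ is a singleton, the incidence component of $\phi$ is encoded by the single datum $i:=I(\phi)(i_{0})\in I(G)$. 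Evaluating the two naturality conditions at $i_{0}$ forces $\check{V}(\phi)(v_{0})=\varsigma_{G}(i)$ and $\check{E}(\phi)(e_{0})=\omega_{G}(i)$, so the vertex and edge components are not free but are determined by $i$. Conversely, for any $i\in I(G)$ these same formulas define maps out of the singletons, and the naturality squares commute by construction, producing a genuine morphism $\mathbb{1}_{\cat{R}}\to G$. I would conclude that $\phi\mapsto I(\phi)(i_{0})$ is a bijection $\mathrm{Hom}_{\cat{R}}(\mathbb{1}_{\cat{R}},G)\cong I(G)$, with injectivity from the fact that $i$ reconstructs all three components and surjectivity from the explicit converse.

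The only delicate point, and hence the main (mild) obstacle, is pinning down the structure maps of $\mathbb{1}_{\cat{R}}$ and verifying that naturality genuinely determines the vertex and edge components rather than leaving them free; once it is seen that the terminal object has a singleton incidence set whose unique incidence maps onto the unique vertex and edge, both injectivity and surjectivity of the correspondence are immediate.
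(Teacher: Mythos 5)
Your proof is correct, but it takes a different route from the paper's. The paper's proof is a two-line adjunction argument: it invokes the fact (from the earlier categorical paper \cite{IH1}) that $\mathbb{1}_{\cat{R}}=I^{\star}(\{1\})=I^{\diamond}(\{1\})$, i.e.\ the terminal object coincides with the \emph{free} incidence hypergraph on one incidence, and then reads off $\cat{R}\left(\mathbb{1}_{\cat{R}},G\right)=\cat{R}\left(I^{\diamond}(\{1\}),G\right)\cong\Set\left(\{1\},I(G)\right)\cong I(G)$ directly from the adjunction $I^{\diamond}\dashv I$. You instead compute $\mathrm{Hom}_{\cat{R}}(\mathbb{1}_{\cat{R}},G)$ by hand: identify the terminal object pointwise as the one-vertex, one-edge, one-incidence hypergraph, and check that the two naturality squares force the vertex and edge components of any morphism to be $\varsigma_{G}(i)$ and $\omega_{G}(i)$ once the incidence component $i$ is chosen. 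Your approach is more elementary and self-contained --- it does not require knowing the nontrivial coincidence $I^{\star}(\{1\})=I^{\diamond}(\{1\})$, and it makes transparent exactly why the vertex and edge data are redundant. The paper's approach buys brevity and a conceptual punchline: the $1$-edge generator is simultaneously terminal (a fact the paper flags as ``critical to the structure theorems''), so the incidence functor $I$ is representable by $\mathbb{1}_{\cat{R}}$; your bijection is in fact natural in $G$ for the same reason, though you only assert it objectwise. Both arguments are complete and compatible.
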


\begin{proof}
From \cite[p.\ 18]{IH1}, note that $\mathbb{1}_{\cat{R}}=I^{\star}\left(\{1\}\right)=I^{\diamond}\left(\{1\}\right)$.  Thus,
\[
\cat{R}\left(\mathbb{1}_{\cat{R}},G\right)
=\cat{R}\left(I^{\diamond}\left(\{1\}\right),G\right)
\cong\Set\left(\{1\},I(G)\right)
\cong I(G).
\]\qed
\end{proof}

On the other hand, one has the natural notion of ``subhypergraph,'' which is defined formally in accordance with ``subgraph'' and ``subdigraph'' \cite[Definitions 1.3.1 \& 2.2.3.1]{balakrishnan}.

\begin{defn}[Subhypergraph]
Given an incidence hypergraph $G$, a \emph{subhypergraph} of $G$ is an incidence hypergraph $K$ such that the following conditions hold:
\begin{enumerate}
\item $\check{V}(K)\subseteq\check{V}(G)$,
$\check{E}(K)\subseteq\check{E}(G)$,
$I(K)\subseteq I(G)$,
\item $\varsigma_{K}(i)=\varsigma_{G}(i)$, $\omega_{K}(i)=\omega_{G}(i)$ for all $i\in I(K)$.
\end{enumerate}
The \emph{canonical inclusion} $\xymatrix{K\ar[r]^{\iota_{K}} & G}$ is the incidence hypergraph homomorphism such that $\check{V}\left(\iota_{K}\right)$, $\check{E}\left(\iota_{K}\right)$, and $I\left(\iota_{K}\right)$ are the set-theoretic inclusions.
\end{defn}

In order to connect subhypergraphs concretely to maps into $\Omega_{\cat{R}}$, the following notion of generation is lifted from abstract algebra.  Recall that a subgroup can be generated from a collection of elements within a group.  For incidence hypergraphs, one can generate the least subhypergraph containing a collection of components within an existing incidence hypergraph.

\begin{defn}[Generated subhypergraph]
Given an incidence hypergraph $G$, let $S_{1}\subseteq\check{V}(G)$, $S_{2}\subseteq\check{E}(G)$, and $S_{3}\subseteq I(G)$.  Let $\xymatrix{S_{1}\ar[r]^(0.4){j_{1}} & \check{V}(G)},\xymatrix{S_{2}\ar[r]^(0.4){j_{2}} & \check{E}(G)}$, $\xymatrix{S_{3}\ar[r]^(0.4){j_{3}} & I(G)}$ be the set-theoretic inclusion maps.  There are unique incidence hypergraph homomorphisms $\xymatrix{\check{V}^{\diamond}\left(S_{1}\right)\ar[r]^(0.6){\hat{j}_{1}} & G},\xymatrix{\check{E}^{\diamond}\left(S_{2}\right)\ar[r]^(0.6){\hat{j}_{2}} & G},\xymatrix{I^{\diamond}\left(S_{3}\right)\ar[r]^(0.6){\hat{j}_{3}} & G}$ such that $\check{V}\left(\hat{j}_{1}\right)=j_{1}$, $\check{E}\left(\hat{j}_{2}\right)=j_{2}$, $I\left(\hat{j}_{3}\right)=j_{3}$.  Let $\varpi_{n}$ be the canonical inclusions into $\check{V}^{\diamond}\left(S_{1}\right)\coprod\check{E}^{\diamond}\left(S_{2}\right)\coprod I^{\diamond}\left(S_{3}\right)$ for $n=1,2,3$.  There is a unique incidence hypergraph homomorphism $\xymatrix{\check{V}^{\diamond}\left(S_{1}\right)\coprod\check{E}^{\diamond}\left(S_{2}\right)\coprod I^{\diamond}\left(S_{3}\right)\ar[r]^(0.8){\phi} & G}$ such that $\phi\circ\varpi_{n}=\hat{j}_{n}$ for $n=1,2,3$.  Define the subhypergraph $\gen_{G}\left(S_{1},S_{2},S_{3}\right)$ of $G$ by
\begin{enumerate}
\item $\check{V}\gen_{G}\left(S_{1},S_{2},S_{3}\right):=\Ran\left(\check{V}(\phi)\right)$,
\item $\check{E}\gen_{G}\left(S_{1},S_{2},S_{3}\right):=\Ran\left(\check{E}(\phi)\right)$,
\item $I\gen_{G}\left(S_{1},S_{2},S_{3}\right):=\Ran\left(I(\phi)\right)$.
\end{enumerate}
\end{defn}

When generating a subgroup from a subset of a group, more elements arise as products of the generating elements and their inverses.  Likewise for incidence hypergraphs, an incidence used for generation of a subhypergraph forces its corresponding vertex and edge to appear.

\begin{proposition}[Structure of generated subhypergraph]
Given an incidence hypergraph $G$, let $S_{1}\subseteq\check{V}(G)$, $S_{2}\subseteq\check{E}(G)$, and $S_{3}\subseteq I(G)$.  Then, one has
\begin{enumerate}
\item $\check{V}\gen_{G}\left(S_{1},S_{2},S_{3}\right)=S_{1}\cup\mathcal{P}\left(\varsigma_{G}\right)\left(S_{3}\right)$,
\item $\check{E}\gen_{G}\left(S_{1},S_{2},S_{3}\right)=S_{2}\cup\mathcal{P}\left(\omega_{G}\right)\left(S_{3}\right)$,
\item $I\gen_{G}\left(S_{1},S_{2},S_{3}\right)=S_{3}$.
\end{enumerate}
\end{proposition}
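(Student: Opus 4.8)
The plan is to push the defining coproduct through the three evaluation functors and then read off the ranges summand by summand. Since $\cat{R}=\Set^{\cat{D}}$ is a presheaf topos, colimits are computed pointwise, so each evaluation functor $\check{V}$, $\check{E}$, $I$ preserves the coproduct $\check{V}^{\diamond}(S_{1})\coprod\check{E}^{\diamond}(S_{2})\coprod I^{\diamond}(S_{3})$ that is the domain of $\phi$. Because coproducts in $\Set$ are disjoint unions and $\phi\circ\varpi_{n}=\hat{j}_{n}$, the range of $\check{V}(\phi)$ is the union of the ranges of the three maps $\check{V}(\hat{j}_{n})$, and likewise for $\check{E}$ and $I$. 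So it suffices to evaluate each generator and then identify how each $\hat{j}_{n}$ acts.

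First I would record the explicit form of the left adjoints to the evaluation functors. Writing $d^{\diamond}$ for the left adjoint of evaluation at $d$, one has $d^{\diamond}(S)(c)\cong\cat{D}(d,c)\times S$. Reading off the hom-sets of $\cat{D}$ (whose only nonidentity arrows are $y:2\to 0$ and $z:2\to 1$) gives: $\check{V}^{\diamond}(S_{1})$ has vertex set $S_{1}$ and no edges or incidences; $\check{E}^{\diamond}(S_{2})$ has edge set $S_{2}$ and no vertices or incidences; and $I^{\diamond}(S_{3})$ has vertex set, edge set, and incidence set each a copy of $S_{3}$, with $\varsigma_{I^{\diamond}(S_{3})}$ and $\omega_{I^{\diamond}(S_{3})}$ the canonical bijections onto the vertex- and edge-copies. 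Hence $I$ of the coproduct is $S_{3}$, $\check{V}$ of the coproduct is $S_{1}\sqcup S_{3}$, and $\check{E}$ of the coproduct is $S_{2}\sqcup S_{3}$.

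Next I would identify each $\hat{j}_{n}$ through its defining adjunction transpose. On incidences only the $I^{\diamond}(S_{3})$ summand contributes, and $I(\hat{j}_{3})=j_{3}$ is the set-theoretic inclusion, so $\Ran\left(I(\phi)\right)=S_{3}$, which is (3). On vertices the $\check{V}^{\diamond}(S_{1})$ summand maps by $j_{1}$, contributing range $S_{1}$, while the $I^{\diamond}(S_{3})$ summand maps by $\check{V}(\hat{j}_{3})$. Naturality of $\hat{j}_{3}$ against $y:2\to 0$ forces $\check{V}(\hat{j}_{3})\circ\varsigma_{I^{\diamond}(S_{3})}=\varsigma_{G}\circ j_{3}$; since $\varsigma_{I^{\diamond}(S_{3})}$ is surjective, this summand has range $\varsigma_{G}(S_{3})=\mathcal{P}(\varsigma_{G})(S_{3})$, and taking the union yields (1). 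Statement (2) is the mirror image, using naturality against $z:2\to 1$ and surjectivity of $\omega_{I^{\diamond}(S_{3})}$.

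The main obstacle is the vertex/edge computation for the incidence summand, i.e.\ showing that a generating incidence forces exactly its $\varsigma$-vertex and $\omega$-edge into the subhypergraph. This is the formal content of ``an incidence drags in its endpoints,'' and it is pinned down precisely by the naturality of $\hat{j}_{3}$ together with the fact that the structure maps of $I^{\diamond}(S_{3})$ are bijections; everything else is routine bookkeeping with pointwise colimits and the explicit adjoint formula.
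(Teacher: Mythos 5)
Your proposal is correct and takes essentially the same route as the paper's proof: decompose $\Ran\left(\check{V}(\phi)\right)$ over the coproduct summands via $\phi\circ\varpi_{n}=\hat{j}_{n}$, discard the empty contribution from $\check{E}^{\diamond}\left(S_{2}\right)$, and convert the incidence summand's contribution into $\mathcal{P}\left(\varsigma_{G}\right)\left(S_{3}\right)$ by naturality of $\hat{j}_{3}$ against $y:2\to 0$ together with the bijectivity of $\varsigma_{I^{\diamond}\left(S_{3}\right)}$, exactly as in the paper's chain of equalities. Your explicit computation of the generators from $d^{\diamond}(S)(c)\cong\cat{D}(d,c)\times S$ just spells out the adjoint formulas the paper imports from its reference, so no further comparison is needed.
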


\begin{proof}
Peeling away the universal constructions, the vertex set arises from the following calculation.
\[\begin{array}{rcl}
\check{V}\gen_{G}\left(S_{1},S_{2},S_{3}\right)
&   =   &   \Ran\left(\check{V}(\phi)\right)
=\mathcal{P}\check{V}(\phi)\left(\{1\}\times S_{1}\right)\cup\emptyset\cup\mathcal{P}\check{V}(\phi)\left(\{3\}\times S_{3}\right)\\
&   =   &   \mathcal{P}\check{V}\left(\phi\circ\varpi_{1}\right)\left(S_{1}\right)\cup\mathcal{P}\check{V}\left(\phi\circ\varpi_{3}\right)\left(S_{3}\right)
=\mathcal{P}\check{V}\left(\hat{j}_{1}\right)\left(S_{1}\right)\cup\mathcal{P}\check{V}\left(\hat{k}\right)\left(S_{3}\right)\\
&   =   &   \mathcal{P}\left(j_{1}\right)\left(S_{1}\right)\cup\mathcal{P}\check{V}\left(\hat{k}\right)\left(S_{3}\right)
=S_{1}\cup\mathcal{P}\left(\check{V}\left(\hat{k}\right)\circ\varsigma_{I^{\diamond}\left(S_{3}\right)}\right)\left(S_{3}\right)
=S_{1}\cup\mathcal{P}\left(\varsigma_{G}\circ I\left(\hat{k}\right)\right)\left(S_{3}\right)\\
&   =   &   S_{1}\cup\mathcal{P}\left(\varsigma_{G}\circ k\right)\left(S_{3}\right)
=S_{1}\cup\mathcal{P}\left(\varsigma_{G}\right)\left(S_{3}\right)\\
\end{array}\]
Similar calculations yield the edge and incidence sets.\qed
\end{proof}

With this notion of generation in hand, the intuitional notion of ``subhypergraph'' captures the subobjects in $\cat{R}$ via the unique characteristic map into $\Omega_{\cat{R}}$.

\begin{theorem}[Subobject characterization]\label{subobject}
For incidence hypergraph $G$, the subobjects of $G$ correspond precisely to subhypergraphs of $G$.
\end{theorem}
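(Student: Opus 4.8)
The plan is to rely on the standard description of monomorphisms in the presheaf topos $\cat{R}=\Set^{\cat{D}}$: a homomorphism $\xymatrix{H\ar[r]^{\phi} & G}$ is monic precisely when each of its three components $\check{V}(\phi)$, $\check{E}(\phi)$, $I(\phi)$ is an injection in $\Set$ \cite[Corollary I.2.15.3]{borceux} (the same fact already used to see that $\eta_{G}$ is monic). Recall that a subobject of $G$ is an equivalence class of such monics, where two monics $\phi,\phi'$ into $G$ are equivalent when an isomorphism of their domains commutes with them. I would prove the theorem by exhibiting mutually inverse assignments between subhypergraphs of $G$ and these equivalence classes.

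First I would send each subhypergraph to a subobject. For a subhypergraph $K$, the canonical inclusion $\xymatrix{K\ar[r]^{\iota_{K}} & G}$ has all three components equal to set-theoretic inclusions, hence injective, so $\iota_{K}$ is monic and determines a subobject $[\iota_{K}]$ of $G$.

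For the reverse assignment, given a monic $\phi$ into $G$, I would define $K$ by taking the three ranges $\Ran\bigl(\check{V}(\phi)\bigr)$, $\Ran\bigl(\check{E}(\phi)\bigr)$, $\Ran\bigl(I(\phi)\bigr)$ as subsets of the corresponding components of $G$, with $\varsigma_{K},\omega_{K}$ the restrictions of $\varsigma_{G},\omega_{G}$. The one substantive point is that this data genuinely forms a subhypergraph, i.e.\ that $\varsigma_{G},\omega_{G}$ carry $I(K)$ into $\check{V}(K),\check{E}(K)$; this is exactly the homomorphism (naturality) condition on $\phi$, since for $i=I(\phi)(j)\in I(K)$ one has $\varsigma_{G}(i)=\check{V}(\phi)\bigl(\varsigma_{H}(j)\bigr)\in\Ran\bigl(\check{V}(\phi)\bigr)$ and likewise $\omega_{G}(i)\in\Ran\bigl(\check{E}(\phi)\bigr)$. (Equivalently, $K=\gen_{G}\bigl(\Ran(\check{V}(\phi)),\Ran(\check{E}(\phi)),\Ran(I(\phi))\bigr)$, so the structure proposition applies directly.) Because $\phi$ is componentwise injective, corestricting each component to its range yields bijections onto the components of $K$, which, inheriting naturality from $\phi$, assemble into an isomorphism $\xymatrix{H\ar[r]^{\theta} & K}$ with $\iota_{K}\circ\theta=\phi$; hence $[\phi]=[\iota_{K}]$. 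This assignment descends to equivalence classes, since replacing $\phi$ by an equivalent monic $\phi'=\phi\circ\alpha$ with $\alpha$ an isomorphism leaves each range unchanged.

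Finally I would check the two assignments are mutually inverse. Passing from a subobject to $K$ and back returns $[\iota_{K}]=[\phi]$ by the factorization just produced, so that composite is the identity on subobjects. Conversely, starting from a subhypergraph $K$, the canonical inclusion $\iota_{K}$ has ranges equal to the component sets of $K$, so the return trip recovers $K$ exactly. It follows that distinct subhypergraphs yield distinct subobjects, and the correspondence is a bijection. I expect the only real content to be the naturality computation confirming that the image of a homomorphism is closed under $\varsigma_{G}$ and $\omega_{G}$; everything else is routine bookkeeping with the presheaf characterization of monics. A more topos-theoretic alternative would identify both sides with characteristic maps $\xymatrix{G\ar[r] & \Omega_{\cat{R}}}$, but the direct componentwise argument is the most transparent.
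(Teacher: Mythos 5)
Your proof is correct, but it takes a genuinely different route from the paper's. You work directly with the definition of a subobject as an equivalence class of monomorphisms, using the pointwise characterization of monics in the presheaf category $\cat{R}=\Set^{\cat{D}}$, and you establish the bijection by an image factorization: every monic $\phi$ into $G$ factors as $\iota_{K}\circ\theta$ with $\theta$ an isomorphism onto the subhypergraph $K$ of component ranges, whose closure under $\varsigma_{G}$ and $\omega_{G}$ is exactly the naturality condition --- you correctly flag this as the only substantive check, and your parenthetical identification $K=\gen_{G}\left(\Ran(\check{V}(\phi)),\Ran(\check{E}(\phi)),\Ran(I(\phi))\right)$ is consistent with the structure proposition. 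The paper instead routes through the subobject classifier it has just constructed: by \cite[Proposition III.5.1.6]{borceux}, subobjects of $G$ biject with $\cat{R}\left(G,\Omega_{\cat{R}}\right)$ (equivalently with $I\left({\Omega_{\cat{R}}}^{G}\right)$, the global elements of the exponential), and the paper's map $\Phi$ sends a subhypergraph $K$ to its classifying map $\chi_{K}$ from the pullback property; injectivity of $\Phi$ comes from the uniqueness of the pullback isomorphism combined with the inclusions being literal set inclusions, and surjectivity from recovering $K$ as $\gen_{G}$ applied to the preimages of the ``true'' components $(1,1)$. Your argument is more elementary and portable --- it works verbatim in any presheaf category and never needs $\Omega_{\cat{R}}$ at all --- whereas the paper's proof deliberately exercises the machinery it will reuse: the identification of subobjects with $I\left({\Omega_{\cat{R}}}^{G}\right)$ directly motivates defining $I\Pwr(G)$ as the set of subhypergraphs in the very next definition, the generated-subhypergraph construction reappears in the power characterization, and the characteristic maps underlie the subobject indicator $\delta_{G}$ used in Section 3. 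Your closing remark anticipates exactly the alternative the paper chose.
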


\begin{proof}

By \cite[Proposition III.5.1.6]{borceux}, the subobjects of $G$ correspond bijectively to the elements of the following set:
\[
I\left({\Omega_{\cat{R}}}^{G}\right)
\cong\cat{R}\left(\mathbb{1}_{\cat{R}},{\Omega_{\cat{R}}}^{G}\right)
\cong\cat{R}\left(G\prod\mathbb{1}_{\cat{R}},\Omega_{\cat{R}}\right)
\cong\cat{R}\left(G,\Omega_{\cat{R}}\right).
\]
Let $\mathscr{S}:=\left\{K\in\ob(\cat{R}):K\textrm{ is a subhypergraph of }G\right\}$.  Given $K\in\mathscr{S}$, then $\iota_{K}$ is monic, so there is a unique $\xymatrix{G\ar[r]^{\chi_{K}} & \Omega_{\cat{R}}}\in\cat{R}$ such that $\xymatrix{G & K\ar[r]^{\mathbf{1}_{K}}\ar[l]_{\iota_{K}} & \mathbb{1}_{\cat{R}}}$ is a pullback of $\xymatrix{G\ar[r]^{\chi_{K}} & \Omega_{\cat{R}} & \mathbb{1}_{\cat{R}}\ar[l]_{t_{\cat{R}}}}$.  Define $\Phi:\mathscr{S}\to\cat{R}\left(G,\Omega_{\cat{R}}\right)$ by $\Phi(K):=\chi_{K}$.

Say $K,L\in\mathscr{S}$ satisfy that $\Phi(K)=\Phi(L)$.  Then, $\chi_{K}=\chi_{L}$, so both $\xymatrix{G & K\ar[r]^{\mathbf{1}_{K}}\ar[l]_{\iota_{K}} & \mathbb{1}_{\cat{R}}}$ and $\xymatrix{G & L\ar[r]^{\mathbf{1}_{L}}\ar[l]_{\iota_{L}} & \mathbb{1}_{\cat{R}}}$ are pullbacks of $\xymatrix{G\ar[rr]^{\chi_{K}=\chi_{L}} & & \Omega_{\cat{R}} & \mathbb{1}_{\cat{R}}\ar[l]_{t_{\cat{R}}}}$.  There is a unique isomorphism $\xymatrix{L\ar[r]^{\alpha} & K}\in\cat{R}$ such that $\iota_{K}\circ\alpha=\iota_{L}$ and $\mathbf{1}_{K}\circ\alpha=\mathbf{1}_{L}$.  For $v\in\check{V}(L)$, one has
\[
v
=\check{V}\left(\iota_{L}\right)(v)
=\check{V}\left(\iota_{K}\right)\left(\check{V}(\alpha)(v)\right)
=\check{V}(\alpha)(v)
\in\check{V}(K),
\]
showing $\check{V}(L)\subseteq\check{V}(K)$.  A dual argument shows equality.  Likewise, one has $\check{E}(L)=\check{E}(K)$ and $I(L)=I(K)$, giving $L=K$.

Let $\chi\in\cat{R}\left(G,\Omega_{\cat{R}}\right)$.  Define $K:=\gen_{G}\left(\check{V}(\chi)^{-1}(1,1),\check{E}(\chi)^{-1}(1,1),I(\chi)^{-1}(1,1)\right)$.  Then, $K\in\mathscr{S}$, and a calculation shows that $\Phi(K)=\chi$.\qed

\end{proof}

To conclude this section, a concrete representation of the ``power hypergraph'' $\Pwr(G):={\Omega_{\cat{R}}}^{G}$ is established.  While the power hypergraph can be represented in terms of homomorphisms \cite[Definition 3.43]{IH1}, the following representation immediately and intuitively connects to notions of power objects and subobjects.  Observe that this power hypergraph is contravariant and is deeply connected to the preimage operation of sets.  Moreover, the ``element'' map of the adjunction encodes membership for subhypergraphs using the different notions of ``true'' and ``false'' discussed in the previous section.

\begin{defn}[Power hypergraph]
Given an incidence hypergraph $G$, define the incidence hypergraph $\Pwr(G)$ by
\begin{enumerate}
\item $\check{V}\Pwr(G):=\mathcal{P}\check{V}(G)$, $\check{E}\Pwr(G):=\mathcal{P}\check{E}(G)$,
\item $I\Pwr(G):=\left\{K\in\ob(\cat{R}):K\textrm{ is a subhypergraph of }G\right\}$,
\item $\varsigma_{\Pwr(G)}(K):=\check{V}(K)$, $\omega_{\Pwr(G)}(K):=\check{E}(K)$.
\end{enumerate}
Define the incidence hypergraph homomorphism $\xymatrix{G\prod\Pwr(G)\ar[r]^(0.7){\mathrm{elem}_{G}} & \Omega_{\cat{R}}}$ by
\begin{enumerate}
\item $\check{V}\left(\mathrm{elem}_{G}\right)(v,S)=\left\{\begin{array}{cc}
(1,1),	&	v\in S,\\
(0,0),	&	v\not\in S,\\
\end{array}\right.$,
$\check{E}\left(\mathrm{elem}_{G}\right)(e,T)=\left\{\begin{array}{cc}
(1,1),	&	e\in T,\\
(0,0),	&	e\not\in T,\\
\end{array}\right.$,
\item $I\left(\mathrm{elem}_{G}\right)(i,K)=\left\{\begin{array}{ll}
(1,1),	&	i\in I(K),\\
\left(0,(1,1),(1,1)\right),	&	i\not\in I(K),\varsigma_{G}(i)\in\check{V}(K),\omega_{G}(i)\in\check{E}(K),\\
\left(0,(1,1),(0,0)\right),	&	\varsigma_{G}(i)\in\check{V}(K),\omega_{G}(i)\not\in\check{E}(K),\\
\left(0,(0,0),(1,1)\right),	&	\varsigma_{G}(i)\not\in\check{V}(K),\omega_{G}(i)\in\check{E}(K),\\
\left(0,(0,0),(0,0)\right),	&	\varsigma_{G}(i)\not\in\check{V}(K),\omega_{G}(i)\not\in\check{E}(K).\\
\end{array}\right.$
\end{enumerate}
\end{defn}

\begin{theorem}[Power characterization]
Given an incidence hypergraph homomorphism $\xymatrix{G\prod K\ar[r]^(0.6){\phi} & \Omega_{\cat{R}}}$, there is a unique incidence hypergraph homomorphism $\xymatrix{K\ar[r]^(0.4){\hat{\phi}} & \Pwr(G)}$ such that $\mathrm{elem}_{G}\circ \left(G\prod\hat{\phi}\right)=\phi$.
\end{theorem}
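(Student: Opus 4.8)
The plan is to exploit that products in the presheaf topos $\cat{R}=\Set^{\cat{D}}$ are computed componentwise, so that $\check{V}(G\prod K)=\check{V}(G)\times\check{V}(K)$, $\check{E}(G\prod K)=\check{E}(G)\times\check{E}(K)$, $I(G\prod K)=I(G)\times I(K)$, and $G\prod\hat{\phi}$ acts as the identity on the $G$-coordinate and as $\hat{\phi}$ on the $K$-coordinate. Under this identification a homomorphism $\phi:G\prod K\to\Omega_{\cat{R}}$ is precisely a choice, for each component of $K$, of a ``true/false''-valued function on the corresponding component of $G$, subject to the single constraint that $\phi$ commute with $\varsigma$ and $\omega$. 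I would first record the structure of $\Omega_{\cat{R}}=\tilde{\mathbb{1}}_{\cat{R}}$ explicitly: its vertex and edge objects are $\{(1,1),(0,0)\}$, its incidence object consists of the true incidence $(1,1)$ together with the four false incidences $(0,a,b)$ with $a,b\in\{(1,1),(0,0)\}$, and $\varsigma_{\Omega_{\cat{R}}},\omega_{\Omega_{\cat{R}}}$ simply read off $a$ and $b$ (returning $(1,1)$ on the true incidence).

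Then I would define $\hat{\phi}$ by turning each characteristic function into the subset it names: set $\check{V}(\hat{\phi})(w):=\{v\in\check{V}(G):\check{V}(\phi)(v,w)=(1,1)\}$ and $\check{E}(\hat{\phi})(f):=\{e\in\check{E}(G):\check{E}(\phi)(e,f)=(1,1)\}$, and send an incidence $j\in I(K)$ to the candidate subhypergraph $K_{j}$ with incidence set $\{i\in I(G):I(\phi)(i,j)=(1,1)\}$, vertex set $\check{V}(\hat{\phi})(\varsigma_{K}(j))$, and edge set $\check{E}(\hat{\phi})(\omega_{K}(j))$. The first thing to check is that $K_{j}$ is genuinely a subhypergraph, i.e.\ that every $i\in I(K_{j})$ satisfies $\varsigma_{G}(i)\in\check{V}(K_{j})$ and $\omega_{G}(i)\in\check{E}(K_{j})$; this is exactly where the hypothesis that $\phi$ is a homomorphism is used, via $\varsigma_{\Omega_{\cat{R}}}(I(\phi)(i,j))=\check{V}(\phi)(\varsigma_{G}(i),\varsigma_{K}(j))$. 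When $I(\phi)(i,j)=(1,1)$ this forces $\check{V}(\phi)(\varsigma_{G}(i),\varsigma_{K}(j))=(1,1)$, i.e.\ $\varsigma_{G}(i)\in\check{V}(\hat{\phi})(\varsigma_{K}(j))=\check{V}(K_{j})$, and dually for edges. That $\hat{\phi}$ is itself a homomorphism is then immediate from the definitions of $\varsigma_{\Pwr(G)},\omega_{\Pwr(G)}$, since $\varsigma_{\Pwr(G)}(K_{j})=\check{V}(K_{j})=\check{V}(\hat{\phi})(\varsigma_{K}(j))$ and symmetrically for $\omega$.

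Next I would verify $\mathrm{elem}_{G}\circ(G\prod\hat{\phi})=\phi$ componentwise. On vertices and edges this is the tautology $v\in\check{V}(\hat{\phi})(w)\iff\check{V}(\phi)(v,w)=(1,1)$ matched against the two-case definition of $\mathrm{elem}_{G}$. The crux of the whole argument is the incidence component: evaluating $\mathrm{elem}_{G}$ at $(i,K_{j})$ selects among its five cases according to whether $i\in I(K_{j})$, whether $\varsigma_{G}(i)\in\check{V}(K_{j})$, and whether $\omega_{G}(i)\in\check{E}(K_{j})$, and I must show this reproduces $I(\phi)(i,j)$ exactly. When $i\in I(K_{j})$ both sides give the true incidence. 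When $i\notin I(K_{j})$, the homomorphism conditions on $\phi$ identify the two coordinates of the false incidence $I(\phi)(i,j)=(0,a,b)$ as $a=\check{V}(\phi)(\varsigma_{G}(i),\varsigma_{K}(j))$ and $b=\check{E}(\phi)(\omega_{G}(i),\omega_{K}(j))$, and these equal $(1,1)$ precisely when $\varsigma_{G}(i)\in\check{V}(K_{j})$, resp.\ $\omega_{G}(i)\in\check{E}(K_{j})$; hence the four false cases of $\mathrm{elem}_{G}$ line up one-for-one with the four false incidences. This matching is the one genuinely delicate point, and it is exactly why $\mathrm{elem}_{G}$ was defined with four separate false-incidence cases.

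Finally, for uniqueness, suppose $\psi:K\to\Pwr(G)$ also satisfies $\mathrm{elem}_{G}\circ(G\prod\psi)=\phi$. The vertex equation forces $v\in\check{V}(\psi)(w)\iff\check{V}(\phi)(v,w)=(1,1)$ for all $v$, pinning down $\check{V}(\psi)=\check{V}(\hat{\phi})$, and dually $\check{E}(\psi)=\check{E}(\hat{\phi})$. The incidence equation forces $I(\psi)(j)$ to have incidence set $\{i:I(\phi)(i,j)=(1,1)\}$, while its vertex and edge sets are determined by $\varsigma_{\Pwr(G)}(I(\psi)(j))=\check{V}(\psi)(\varsigma_{K}(j))$ and $\omega_{\Pwr(G)}(I(\psi)(j))=\check{E}(\psi)(\omega_{K}(j))$ because $\psi$ is a homomorphism, whence $I(\psi)(j)=K_{j}$. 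Therefore $\psi=\hat{\phi}$, completing the argument. I expect the incidence-component verification in the third paragraph to be the main obstacle, the vertex and edge parts being routine subset-versus-characteristic-function bookkeeping.
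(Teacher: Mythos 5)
Your proposal is correct and follows essentially the same route as the paper: you construct the identical map $\hat{\phi}$, sending $w\mapsto\{v:\check{V}(\phi)(v,w)=(1,1)\}$ on vertices and edges and each $j\in I(K)$ to the subhypergraph with incidence set $\{i:I(\phi)(i,j)=(1,1)\}$, the only cosmetic difference being that the paper packages this incidence component as $\gen_{G}\bigl(\check{V}(\hat{\phi})(\varsigma_{K}(j)),\check{E}(\hat{\phi})(\omega_{K}(j)),T_{j}\bigr)$, which collapses to your directly-defined $K_{j}$ once your well-definedness check ($\varsigma_{G}(i)\in\check{V}(K_{j})$, etc.) is made. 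The composition and uniqueness verifications you spell out, including the one-for-one matching of the four false-incidence cases of $\mathrm{elem}_{G}$ with the four false incidences of $\Omega_{\cat{R}}$, are exactly the checks the paper declares routine.
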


\begin{proof}
For $i\in I(K)$, let $T_{i}:=\left\{j\in I(G):I(\phi)(j,i)=(1,1)\right\}$ and define $\xymatrix{K\ar[r]^(0.4){\hat{\phi}} & \Pwr(G)}\in\cat{R}$ by
\begin{itemize}
\item $\check{V}\left(\hat{\phi}\right)(v):=\left\{w\in\check{V}(G):\check{V}(\phi)(w,v)=(1,1)\right\}$,
\item $\check{E}\left(\hat{\phi}\right)(e):=\left\{f\in\check{E}(G):\check{E}(\phi)(f,e)=(1,1)\right\}$,
\item $I\left(\hat{\phi}\right)(i):=\gen_{G}\left(\check{V}\left(\hat{\phi}\right)\left(\varsigma_{K}(i)\right),\check{E}\left(\hat{\phi}\right)\left(\omega_{K}(i)\right),T_{i}\right)$.
\end{itemize}
The proof of the composition condition and uniqueness are routine.
\qed
\end{proof}

\begin{corollary}[Power map]
Let $\xymatrix{G\ar[r]^{\phi} & H}$ be an incidence hypergraph homomorphism.  The power map $\xymatrix{\Pwr(H)\ar[r]^{\Pwr(\phi)} & \Pwr(G)}$ is given by
\begin{enumerate}
\item $\check{V}\Pwr(\phi)(S)=\check{V}(\phi)^{-1}(S)$,
$\check{E}\Pwr(\phi)(T)=\check{E}(\phi)^{-1}(T)$,
\item $I\Pwr(\phi)(K)=\gen_{G}\left(\check{V}(\phi)^{-1}\left(\check{V}(K)\right),\check{E}(\phi)^{-1}\left(\check{E}(K)\right),I(\phi)^{-1}\left(I(K)\right)\right)$.
\end{enumerate}
\end{corollary}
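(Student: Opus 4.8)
The plan is to realize $\Pwr(\phi)$ as an exponential transpose and then read off each component from the explicit transpose formula already established in the Power characterization theorem. Since $\Pwr(G)={\Omega_{\cat{R}}}^{G}$, the contravariant action of the power functor on a morphism $\xymatrix{G\ar[r]^{\phi} & H}$ is $\Pwr(\phi)=\Omega_{\cat{R}}^{\phi}$, which under the adjunction $\cat{R}\left(K,\Pwr(G)\right)\cong\cat{R}\left(G\prod K,\Omega_{\cat{R}}\right)$ with counit $\mathrm{elem}_{G}$ is the unique map classified by the composite $\psi:=\mathrm{elem}_{H}\circ\left(\phi\prod\mathrm{id}_{\Pwr(H)}\right):G\prod\Pwr(H)\to\Omega_{\cat{R}}$. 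First I would justify this identification: unwinding the exponential adjunction, $\Pwr(\phi)$ is by definition the unique morphism whose transpose is the evaluation $\mathrm{elem}_{H}$ precomposed with $\phi$ in the first coordinate, i.e.\ $\Pwr(\phi)=\hat{\psi}$ in the notation of the Power characterization theorem applied to $K=\Pwr(H)$.

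Next I would substitute $\psi$ into the transpose formula for the vertex and edge components. Because products in the presheaf topos $\cat{R}=\Set^{\cat{D}}$ are computed componentwise, $\check{V}(\psi)(w,S)=\check{V}\left(\mathrm{elem}_{H}\right)\left(\check{V}(\phi)(w),S\right)$ for $w\in\check{V}(G)$ and $S\in\check{V}\Pwr(H)=\mathcal{P}\check{V}(H)$. By the definition of $\mathrm{elem}_{H}$ on vertices this value is $(1,1)$ exactly when $\check{V}(\phi)(w)\in S$, so the transpose formula $\check{V}(\hat{\psi})(S)=\left\{w:\check{V}(\psi)(w,S)=(1,1)\right\}$ collapses to $\check{V}\Pwr(\phi)(S)=\check{V}(\phi)^{-1}(S)$. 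The edge component is obtained by the identical argument with $\omega$, $\check{E}$, and $T$ replacing $\varsigma$, $\check{V}$, and $S$.

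For the incidence component I would compute the set $T_{K}=\left\{j\in I(G):I(\psi)(j,K)=(1,1)\right\}$ attached to a subhypergraph $K$ of $H$. Using componentwise products once more, $I(\psi)(j,K)=I\left(\mathrm{elem}_{H}\right)\left(I(\phi)(j),K\right)$, and the first clause of the incidence definition of $\mathrm{elem}_{H}$ shows this equals $(1,1)$ precisely when $I(\phi)(j)\in I(K)$; hence $T_{K}=I(\phi)^{-1}(I(K))$. Feeding this, together with $\varsigma_{\Pwr(H)}(K)=\check{V}(K)$, $\omega_{\Pwr(H)}(K)=\check{E}(K)$, and the two preimage formulas already obtained, into the incidence clause $I(\hat{\psi})(i)=\gen_{G}\left(\check{V}(\hat{\psi})\left(\varsigma_{\Pwr(H)}(i)\right),\check{E}(\hat{\psi})\left(\omega_{\Pwr(H)}(i)\right),T_{i}\right)$ yields exactly $\gen_{G}\left(\check{V}(\phi)^{-1}\left(\check{V}(K)\right),\check{E}(\phi)^{-1}\left(\check{E}(K)\right),I(\phi)^{-1}\left(I(K)\right)\right)$, as claimed.

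The only genuinely delicate point is the first one: confirming that the canonical contravariant power functor is transpose-classified by $\mathrm{elem}_{H}\circ\left(\phi\prod\mathrm{id}_{\Pwr(H)}\right)$ and not by some variant, which amounts to checking the defining square $\mathrm{elem}_{G}\circ\left(\mathrm{id}_{G}\prod\Omega_{\cat{R}}^{\phi}\right)=\mathrm{elem}_{H}\circ\left(\phi\prod\mathrm{id}_{\Pwr(H)}\right)$ for the exponential. Once that identification is pinned down, the remaining work is a direct substitution into the Power characterization formula; I expect no obstruction from the $\gen_{G}$ term, since a generated subhypergraph returns its incidence argument unchanged, so the incidence component requires no further simplification.
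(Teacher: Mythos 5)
Your proposal is correct and is essentially the paper's own (implicit) argument: the paper states this result as an unproved corollary of the Power characterization theorem, and your derivation --- identifying $\Pwr(\phi)$ as the transpose of $\mathrm{elem}_{H}\circ\left(\phi\prod\mathrm{id}_{\Pwr(H)}\right)$ and substituting into the explicit transpose formulas, with $T_{K}=I(\phi)^{-1}\left(I(K)\right)$ falling out of the first clause of $I\left(\mathrm{elem}_{H}\right)$ --- is exactly the omitted verification. The ``delicate point'' you flag is in fact the standard definition of the contravariant power/exponential functor in a topos, so nothing beyond the substitution you perform is required.
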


\subsection{Injectivity}

With understanding of the subhypergraphs and partial morphism representers, discussion returns to injectivity.  Using $\tilde{\Box}$, the injective objects of $\cat{R}$ can be readily identified, and manifest much like \cite[Proposition 3.2.1]{Grill2}.  An incidence hypergraph is injective essentially when every edge is incident to every vertex.  To ease the exposition, the following notation is introduced to refer to the set of incidences between a specified vertex and edge.

\begin{defn}
If $G$ is an incidence hypergraph, define $\inc_{G}(v,e):=\varsigma_{G}^{-1}(v)\cap\omega_{G}^{-1}(e)$ for $v\in\check{V}(G)$, $e\in\check{E}(G)$.
\end{defn}

\begin{proposition}[Injective incidence hypergraphs]\label{injective}
An incidence hypergraph $G$ is injective with respect to monomorphisms in $\cat{R}$ if and only if the following conditions hold:
\begin{enumerate}
\item $\check{V}(G)\neq\emptyset$;
$\check{E}(G)\neq\emptyset$;
\item $\inc_{G}(v,e)\neq\emptyset$ for all $v\in\check{V}(G)$ and $e\in\check{E}(G)$.
\end{enumerate}
\end{proposition}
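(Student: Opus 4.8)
\section*{Proof proposal}

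The plan is to characterize injectivity through the partial morphism representer $\tilde{G}$, exactly as the functor $\tilde{\Box}$ was set up to permit. The governing principle is the standard topos fact that an object is injective precisely when it is a retract of its representer: I claim that $G$ is injective with respect to monomorphisms in $\cat{R}$ if and only if $\eta_{G}:G\to\tilde{G}$ is a split monomorphism. For the forward implication, $\eta_{G}$ is monic, so if $G$ is injective the identity $\mathrm{id}_{G}:G\to G$ extends along $\eta_{G}$ to some $r:\tilde{G}\to G$ with $r\circ\eta_{G}=\mathrm{id}_{G}$, exhibiting a retraction. For the reverse implication I would use that $\tilde{G}$ is itself injective together with the fact that retracts of injectives are injective: given a monomorphism $\phi:H\to K$ and $\psi:H\to G$, lift $\eta_{G}\circ\psi$ to $\bar{\psi}:K\to\tilde{G}$ with $\bar{\psi}\circ\phi=\eta_{G}\circ\psi$, and then $r\circ\bar{\psi}$ solves the extension problem for $G$ since $r\circ\bar{\psi}\circ\phi=r\circ\eta_{G}\circ\psi=\psi$. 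This reduces the proposition to translating ``$\eta_{G}$ splits'' into the two combinatorial conditions, using the explicit description of the ``false'' components of $\tilde{G}$.

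For necessity, suppose a retraction $r=\left(\check{V}(r),\check{E}(r),I(r)\right)$ exists. The representer $\tilde{G}$ always carries the false vertex $(0,0)$ and false edge $(0,0)$, and $\check{V}(r)$, $\check{E}(r)$ must send these to elements of $\check{V}(G)$ and $\check{E}(G)$; hence both are nonempty, giving condition (1). For condition (2), fix $v\in\check{V}(G)$ and $e\in\check{E}(G)$ and consider the false incidence $z:=\left(0,(1,v),(1,e)\right)\in I(\tilde{G})$, for which $\varsigma_{\tilde{G}}(z)=(1,v)$ and $\omega_{\tilde{G}}(z)=(1,e)$. Setting $i:=I(r)(z)$, the homomorphism laws give $\varsigma_{G}(i)=\check{V}(r)(1,v)$ and $\omega_{G}(i)=\check{E}(r)(1,e)$; but $\check{V}(r)(1,v)=\check{V}(r\circ\eta_{G})(v)=v$ and likewise $\check{E}(r)(1,e)=e$, so $i\in\inc_{G}(v,e)$ and this set is nonempty.

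For sufficiency, assume conditions (1) and (2) and build the retraction. Choose $v_{0}\in\check{V}(G)$ and $e_{0}\in\check{E}(G)$, and choose for each pair a member $\kappa(v,e)\in\inc_{G}(v,e)$, which is possible by (2). Define $r$ by $\check{V}(r)(1,v):=v$, $\check{V}(r)(0,0):=v_{0}$, and dually on edges; set $I(r)(1,i):=i$; and for a false incidence $(0,a,b)$ put $v:=\check{V}(r)(a)$, $e:=\check{E}(r)(b)$ and $I(r)(0,a,b):=\kappa(v,e)$. Checking that $r$ is an incidence hypergraph homomorphism is routine: on ``true'' incidences it is forced by the structure of $\eta_{G}$, while on a false incidence $(0,a,b)$ one has $\varsigma_{G}\left(\kappa(v,e)\right)=v=\check{V}(r)\left(\varsigma_{\tilde{G}}(0,a,b)\right)$ and symmetrically for $\omega$, precisely because $\kappa(v,e)\in\varsigma_{G}^{-1}(v)\cap\omega_{G}^{-1}(e)$. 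By construction $r\circ\eta_{G}=\mathrm{id}_{G}$, so $\eta_{G}$ splits and $G$ is injective.

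The step I expect to be the genuine obstacle is the topos-theoretic input that $\tilde{G}$ is injective for every $G$, since the retract characterization in the first paragraph rests on it. I would establish this from the partial morphism representer characterization already proved: given a monomorphism $\phi:H\to K$ and a map $f:H\to\tilde{G}$, pull $f$ back along $\eta_{G}$ to obtain a partial map $\left(D\to H,\ D\to G\right)$ with monic first leg, compose that leg with $\phi$ to obtain a partial map out of $K$, and apply the representer theorem to produce the desired $\bar{f}:K\to\tilde{G}$. The delicate part is verifying $\bar{f}\circ\phi=f$ via the pullback pasting lemma rather than the construction itself; alternatively one may simply cite the general result for partial map classifiers in a topos (as in \cite{elephant1}). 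Everything downstream — the reading-off in the necessity direction and the explicit retraction in the sufficiency direction — is then bookkeeping with the concrete formulas for $\tilde{G}$, $\eta_{G}$, and $\inc_{G}$.
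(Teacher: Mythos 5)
Your proposal is correct and follows essentially the same route as the paper: both directions reduce injectivity to the splitting of $\eta_{G}$, read off conditions (1)--(2) from the images of the ``false'' vertex, edge, and incidences $\left(0,(1,v),(1,e)\right)$ under a retraction, and conversely build the retraction explicitly from chosen $v_{0}$, $e_{0}$, and $k_{v,e}\in\inc_{G}(v,e)$. The only difference is bookkeeping: where you sketch a pullback-pasting proof that $\tilde{G}$ is injective, the paper simply cites \cite[Proposition III.5.6.1]{borceux}, so your argument is, if anything, slightly more self-contained.
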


\begin{proof}

$(\Rightarrow)$ As $\eta_{G}$ is monic and $G$ is injective, there is $\xymatrix{\tilde{G}\ar[r]^{\psi} & G}\in\cat{R}$ such that $\psi\circ\eta_{G}=id_{G}$.
\[\xymatrix{
G\\
G\ar[u]^{id_{G}}\ar[r]_{\eta_{G}}	&	\tilde{G}\ar@{..>}[ul]_{\exists\psi}\\
}\]
A calculation shows the following for $v\in\check{V}(G)$ and $e\in\check{E}(G)$:  $\check{V}(\psi)(0,0)\in\check{V}(G)$, $\check{E}(\psi)(0,0)\in\check{E}(G)$, and $I(\psi)\left(0,(1,v),(1,e)\right)\in\inc_{G}(v,e)$.

$(\Leftarrow)$ Fix $u_{0}\in\check{V}(G)$, $g_{0}\in\check{E}(G)$, and $k_{v,e}\in\inc_{G}(v,e)$ for $v\in\check{V}(G)$ and $e\in\check{E}(G)$.  Define $\xymatrix{\tilde{G}\ar[r]^{\psi} & G}\in\cat{R}$ by
\begin{itemize}
\item $\check{V}(\psi)(w):=\left\{\begin{array}{cc}
v,	&	w=(1,v),\\
u_{0},	&	w=(0,0),\\
\end{array}\right.$
$\check{E}(\psi)(f):=\left\{\begin{array}{cc}
e,	&	f=(1,e),\\
g_{0},	&	f=(0,0),\\
\end{array}\right.$
\item $I(\psi)(j):=\left\{\begin{array}{cc}
i,	&	j=(1,i),\\
k_{\left(\check{V}(\psi)\circ\varsigma_{\tilde{G}}\right)(j),\left(\check{E}(\psi)\circ\omega_{\tilde{G}}\right)(j)},	&	\textrm{otherwise}.\\
\end{array}\right.$
\end{itemize}
A calculation shows that $\psi\circ\eta_{G}=id_{G}$, meaning that $G$ is a retract of $\tilde{G}$, and $\tilde{G}$ is injective by \cite[Proposition III.5.6.1]{borceux}.\qed

\end{proof}

The category $\cat{R}$ has enough injectives as $\xymatrix{G\ar[r]^{\eta_{G}} & \tilde{G}}$ is a monomorphism into an injective object for every incidence hypergraph $G$, but this will sadly not be a minimal injective embedding, i.e.\ the injective envelope.  To identify the injective envelope, the essential monomorphisms are characterized as in \cite[Propositions 3.3.1 \& 3.3.2]{Grill2}.  Much like the quiver case, an essential monomorphism only appends vertices, edges, or incidences if none already exist.  By this characterization, $\eta_{G}$ will only be essential in the trivial case when $G=\mathbb{0}_{\cat{R}}$.

\begin{proposition}[Essential monic]\label{essential-mono}
An incidence hypergraph monomorphism $\xymatrix{G\textrm{ }\ar@{>->}[r]^{\phi} & H}$ is essential if and only if the following conditions hold:
\begin{enumerate}
\item if $\check{V}(G)\neq\emptyset$, then $\check{V}(\phi)$ is bijective;
\item if $\check{V}(G)=\emptyset$, then $\card\left(\check{V}(H)\right)\leq 1$;
\item if $\check{E}(G)\neq\emptyset$, then $\check{E}(\phi)$ is bijective;
\item if $\check{E}(G)=\emptyset$, then $\card\left(\check{E}(H)\right)\leq 1$;
\item if $v\in\check{V}(G)$ and $e\in\check{E}(G)$ satisfy $\inc_{G}(v,e)\neq\emptyset$, then
\[
\mathcal{P}I(\phi)\left(\inc_{G}(v,e)\right)
=
\inc_{H}\left(\check{V}(\phi)(v),\check{E}(\phi)(e)\right);
\]
\item if $x\in\check{V}(H)$ and $y\in\check{E}(H)$ satisfy
\[
\left(\left(\varsigma_{H}\circ I(\phi)\right)(i),\left(\omega_{H}\circ I(\phi)\right)(i)\right)\neq(x,y)
\]
for all $i\in I(G)$, then $\card\left(\inc_{H}(x,y)\right)\leq1$.
\end{enumerate}
\end{proposition}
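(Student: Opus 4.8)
The plan is to exploit that $\cat{R}=\Set^{\cat{D}}$ is a presheaf topos, so a homomorphism is monic exactly when each of its three components $\check{V}(\cdot)$, $\check{E}(\cdot)$, $I(\cdot)$ is an injection, and that $\phi$ is \emph{essential} precisely when every $\xymatrix{H\ar[r]^{g} & K}$ with $g\circ\phi$ monic is itself monic. Thus the statement reduces to the assertion that for every $g$, if all three composites $\check{V}(g)\circ\check{V}(\phi)$, $\check{E}(g)\circ\check{E}(\phi)$, $I(g)\circ I(\phi)$ are injective, then all three of $\check{V}(g)$, $\check{E}(g)$, $I(g)$ are injective. I would prove the two implications separately, using explicit collapsing quotients for necessity and a coincidence-tracking case analysis for sufficiency.

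For necessity I argue by contraposition: if one of the six conditions fails, I build a non-monic $\xymatrix{H\ar[r]^{g} & K}$ with $g\circ\phi$ monic. Since $\cat{R}$ is cocomplete, each such $g$ is the quotient of $H$ that fuses exactly one pair of parallel components and is the identity elsewhere. If (1) fails there is a vertex $x_{0}\in\check{V}(H)$ outside $\Ran(\check{V}(\phi))$ while an image vertex $w=\check{V}(\phi)(v_{0})$ exists; fusing $x_{0}$ with $w$ is non-injective on vertices, yet leaves the vertex-composite injective because $x_{0}$ is not an image vertex, and keeps edges and incidences untouched. If (2) fails ($\check{V}(G)=\emptyset$ but $\card(\check{V}(H))\geq 2$), fuse any two vertices; the vertex-composite is vacuously injective. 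Conditions (3) and (4) are the edge-dual constructions. If (5) fails, there is an incidence $j\in\inc_{H}(\check{V}(\phi)(v),\check{E}(\phi)(e))$ not in $\mathcal{P}I(\phi)(\inc_{G}(v,e))$ while $\inc_{G}(v,e)\neq\emptyset$ supplies an image incidence $I(\phi)(i)$ with the same endpoints; fusing $j$ with $I(\phi)(i)$ is non-injective on incidences but leaves $I(g)\circ I(\phi)$ injective since $j$ is not an image incidence. If (6) fails, a pair $(x,y)$ missed by every image incidence carries two incidences $j_{1},j_{2}\in\inc_{H}(x,y)$, neither an image incidence, so fusing them keeps $I(g)\circ I(\phi)$ injective. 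In every case $g$ is not monic although $g\circ\phi$ is, so $\phi$ is not essential.

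For sufficiency, assume (1)--(6) and fix $\xymatrix{H\ar[r]^{g} & K}$ with $g\circ\phi$ monic. First, $\check{V}(g)$ is injective: if $\check{V}(G)\neq\emptyset$ then (1) makes $\check{V}(\phi)$ surjective, so any coincidence $\check{V}(g)(x_{1})=\check{V}(g)(x_{2})$ pulls back along $\check{V}(\phi)$ and is killed by injectivity of $\check{V}(g)\circ\check{V}(\phi)$; if $\check{V}(G)=\emptyset$ then (2) makes $\check{V}(H)$ a singleton and injectivity is automatic. The same argument with (3),(4) gives $\check{E}(g)$ injective. For $I(g)$, suppose $I(g)(j_{1})=I(g)(j_{2})$. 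Applying $\varsigma_{K},\omega_{K}$ and using that $g$ is a homomorphism together with the injectivity of $\check{V}(g),\check{E}(g)$ forces $\varsigma_{H}(j_{1})=\varsigma_{H}(j_{2})=:x$ and $\omega_{H}(j_{1})=\omega_{H}(j_{2})=:y$, so $j_{1},j_{2}\in\inc_{H}(x,y)$. If some $i\in I(G)$ has $I(\phi)(i)\in\inc_{H}(x,y)$, set $v:=\varsigma_{G}(i)$ and $e:=\omega_{G}(i)$; then $\inc_{G}(v,e)\neq\emptyset$ and (5) gives $\inc_{H}(x,y)=\mathcal{P}I(\phi)(\inc_{G}(v,e))$, so $j_{1}=I(\phi)(i_{1})$, $j_{2}=I(\phi)(i_{2})$ and injectivity of $I(g)\circ I(\phi)$ yields $i_{1}=i_{2}$, hence $j_{1}=j_{2}$. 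Otherwise $(x,y)$ meets the hypothesis of (6), so $\card(\inc_{H}(x,y))\leq 1$ and again $j_{1}=j_{2}$. Thus $g$ is monic and $\phi$ is essential.

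The main obstacle will be the necessity direction: I must verify for each failed condition that the chosen identification is a legitimate homomorphism (parallel incidences share endpoints, so fusing them needs no further collapsing) and, crucially, that composing with $\phi$ stays injective on all three components --- which hinges on the fused element never being an image of $G$, exactly the scenario each condition is designed to forbid. The sufficiency direction is comparatively mechanical once the vertex and edge components are dispatched, since the incidence coincidence is routed to either (5) or (6) according to whether $(x,y)$ is covered by the image of $\phi$.
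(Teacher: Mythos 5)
Your proposal is correct and takes essentially the same route as the paper's own proof: sufficiency by componentwise coincidence-tracking (vertices via (1)/(2), edges via (3)/(4), incidences routed through (5) or (6)), and necessity by contraposition, building for each failed condition a collapsing quotient $\xymatrix{H\ar[r] & K}$ that fuses one pair of non-image (or vacuously safe) components so the composite with $\phi$ stays monic. The only cosmetic difference is that in the sufficiency incidence case you split on whether $(x,y)$ is covered by some image incidence, whereas the paper splits on whether one of the coincident incidences is itself an image and reaches the hypothesis of (6) by a short contradiction --- the two case analyses are logically equivalent.
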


\begin{proof}

$(\Leftarrow)$ Say $\xymatrix{H\ar[r]^{\alpha} & K}\in\cat{R}$ satisfies that $\alpha\circ\phi$ is monic.  Then, all of $\check{V}(\alpha)\circ\check{V}(\phi)$, $\check{E}(\alpha)\circ\check{E}(\phi)$, and $I(\alpha)\circ I(\phi)$ are one-to-one.

If $\check{V}(G)=\emptyset$, then $\card\left(\check{V}(H)\right)\leq 1$, so $\check{V}(\alpha)$ is automatically one-to-one.  If $\check{V}(G)\neq\emptyset$, $\check{V}(\alpha)$ is one-to-one as $\check{V}(\phi)$ is bijective.  By a similar argument, $\check{E}(\alpha)$ is also one-to-one.

Say $i,j\in I(H)$ satisfy that $I(\alpha)(i)=I(\alpha)(j)$.  Let $v:=\varsigma_{H}(i)$ and $e:=\omega_{H}(i)$.  A calculation shows that
\[\begin{array}{ccc}
\check{V}(\alpha)(v)=\check{V}(\alpha)\left(\varsigma_{H}(j)\right)	&	\textrm{and}	&	\check{E}(\alpha)(e)=\check{E}(\alpha)\left(\omega_{H}(j)\right).
\end{array}\]
As $\check{V}(\alpha)$ and $\check{E}(\alpha)$ are one-to-one, $v=\varsigma_{H}(j)$ and $e=\omega_{H}(j)$, giving $i,j\in\inc_{H}(v,e)$.  If there is $k\in I(G)$ such that $i=I(\phi)(k)$, then a calculation shows
\[\begin{array}{ccc}
v=\check{V}(\phi)\left(\varsigma_{G}(k)\right),	&	\textrm{and}	&	e=\check{E}(\phi)\left(\omega_{G}(k)\right),\\
\end{array}\]
which gives that
\[
j
\in\inc_{H}\left(\check{V}(\phi)\left(\varsigma_{G}(k)\right),\check{E}(\phi)\left(\omega_{G}(k)\right)\right)
=\mathcal{P}I(\phi)\left(\inc_{G}\left(\varsigma_{G}(k),\omega_{G}(k)\right)\right).
\]
Then, there is $l\in I(G)$ such that $I(\phi)(l)=j$, so
\[
I(\alpha\circ\phi)(k)
=I(\alpha)\left(i\right)
=I(\alpha)\left(j\right)
=I(\alpha\circ\phi)(l).
\]
As $I(\alpha\circ\phi)$ is one-to-one, $k=l$, giving $i=j$.

Say $i\neq I(\phi)(k)$ for all $k\in I(G)$.  If there was $k\in I(G)$ such that $\left(\left(\varsigma_{H}\circ I(\phi)\right)(k),\left(\omega_{H}\circ I(\phi)\right)(k)\right)=(v,e)$, then a calculation shows
\[
i
\in\inc_{H}\left(\left(\varsigma_{H}\circ I(\phi)\right)(k),\left(\omega_{H}\circ I(\phi)\right)(k)\right)
=\mathcal{P}I(\phi)\left(\inc_{G}\left(\varsigma_{G}(k),\omega_{G}(k)\right)\right),
\]
contradicting that $i\neq I(\phi)(k)$ for all $k\in I(G)$.  Thus, $\left(\left(\varsigma_{H}\circ I(\phi)\right)(k),\left(\omega_{H}\circ I(\phi)\right)(k)\right)\neq(v,e)$ for all $k\in I(G)$.  Thus, $\card\left(\{i,j\}\right)\leq\card\left(\inc_{H}(v,e)\right)\leq1$, so $i=j$.  Therefore, $I(\alpha)$ is one-to-one.

$(\neg\Leftarrow\neg)$ In each case, an appropriate $\xymatrix{H\ar[r]^{\alpha} & K}\in\cat{R}$ is constructed such that $\alpha\circ\phi$ is monic, but $\alpha$ is not monic.
\begin{enumerate}

\item\label{axiom1} Choose $w\in\check{V}(G)$ and $z\in\check{V}(H)\setminus\Ran\left(\check{V}(\phi)\right)$.  Let $\sim$ be the equivalence relation on $\check{V}(H)$ that associates $\check{V}(\phi)(w)$ and $z$, and is equality otherwise.  Let $q:\check{V}(H)\to\check{V}(H)/\sim$ be the quotient map, $K:=\left(\check{V}(H)/\sim,\check{E}(H),I(H),q\circ\varsigma_{H},\omega_{H}\right)$, and $\alpha:=\left(q,id_{\check{E}(H)},id_{I(H)}\right)$.

\item\label{axiom2} Assume $\check{V}(G)=\emptyset$ and $\card\left(\check{V}(H)\right)\geq2$.  Let $x,y\in\check{V}(H)$ satisfy that $x\neq y$.  Let $\sim$ be the equivalence relation on $\check{V}(H)$ that associates $x$ and $y$, and is equality otherwise.  Let $q:\check{V}(H)\to\check{V}(H)/\sim$ be the quotient map, $K:=\left(\check{V}(H)/\sim,\check{E}(H),I(H),q\circ\varsigma_{H},\omega_{H}\right)$, and $\alpha:=\left(q,id_{\check{E}(H)},id_{I(H)}\right)$.

\item This case is dual to case \ref{axiom1}.
\item This case is dual to case \ref{axiom2}.

\item Assume there are $v\in\check{V}(G)$, $e\in\check{E}(G)$, $j\in\inc_{G}(v,e)$, and $z\in\inc_{H}\left(\check{V}(\phi)(v),\check{E}(\phi)(e)\right)\setminus\mathcal{P}I(\phi)\left(\inc_{G}(v,e)\right)$.  Let $\sim$ be the equivalence relation on $I(H)$ that associates $j$ and $z$, and is equality otherwise.  Let $q:I(H)\to I(H)/\sim$ be the quotient map.  Define $\varsigma_{K}:I(H)/\sim\to\check{V}(H)$ and $\omega_{K}:I(H)/\sim\to\check{E}(H)$ by $\varsigma_{K}(q(i)):=\varsigma_{H}(i)$ and $\omega_{K}(q(i)):=\omega_{H}(i)$, which are well-defined by a quick calculation.  Let $K:=\left(\check{V}(H),\check{E}(H),I(H)/\sim,\varsigma_{K},\omega_{K}\right)$ and $\alpha:=\left(id_{\check{V}(H)},id_{\check{E}(H)},q\right)$.

\item Assume that $x\in\check{V}(H)$ and $z\in\check{E}(H)$ satisfy that $(x,z)\neq\left(\left(\varsigma_{H}\circ I(\phi)\right)(i),\left(\omega_{H}\circ I(\phi)\right)(i)\right)$ for all $i\in I(G)$, but $\card\left(\inc_{H}(x,z)\right)\geq2$.  Let $g,h\in\inc_{H}(x,z)$ satisfy that $g\neq h$.  Let $\sim$ be the equivalence relation on $I(H)$ that associates $g$ and $h$, and is equality otherwise.  Let $q:I(H)\to I(H)/\sim$ be the quotient map.  Define $\varsigma_{K}:I(H)/\sim\to\check{V}(H)$ and $\omega_{K}:I(H)/\sim\to\check{E}(H)$ by $\varsigma_{K}(q(i)):=\varsigma_{H}(i)$ and $\omega_{K}(q(i)):=\omega_{H}(i)$, which are well-defined by a quick calculation.  Let $K:=\left(\check{V}(H),\check{E}(H),I(H)/\sim,\varsigma_{K},\omega_{K}\right)$ and $\alpha:=\left(id_{\check{V}(H)},id_{\check{E}(H)},q\right)$.\qed

\end{enumerate}

\end{proof}

\begin{corollary}[Essential $\eta_{G}$]
For an incidence hypergraph $G$, $\eta_{G}$ is essential if and only if $G=\mathbb{0}_{\cat{R}}$.
\end{corollary}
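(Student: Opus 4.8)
The plan is to invoke the essential-monic characterization of Proposition~\ref{essential-mono} directly with $\phi=\eta_{G}$ and $H=\tilde{G}$, and then read off which of its six conditions can hold. The key structural observation is that, by the partial morphism representer construction, $\eta_{G}$ always adjoins exactly one fresh vertex $(0,0)$ and one fresh edge $(0,0)$ (together with a full complement of new ``false'' incidences); the maps $\check{V}(\eta_{G})$ and $\check{E}(\eta_{G})$ are the inclusions $v\mapsto(1,v)$ and $e\mapsto(1,e)$, which are injective but never hit $(0,0)$.

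First I would dispose of the direction $G=\mathbb{0}_{\cat{R}}\Rightarrow\eta_{G}$ essential. If $G=\mathbb{0}_{\cat{R}}$, then $\check{V}(G)=\check{E}(G)=I(G)=\emptyset$, so the construction collapses $\tilde{G}$ to the single vertex $(0,0)$, the single edge $(0,0)$, and the single incidence $\left(0,(0,0),(0,0)\right)$; that is, $\tilde{G}$ is the terminal object $\mathbb{1}_{\cat{R}}$. With both $\check{V}(G)$ and $\check{E}(G)$ empty, conditions (1), (3), and (5) of Proposition~\ref{essential-mono} have empty premises and hold vacuously, while conditions (2), (4), and (6) ask precisely that $\tilde{G}$ carry at most one vertex, at most one edge, and at most one incidence over the unique vertex--edge pair. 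All three inequalities hold by the displayed cardinalities, so $\eta_{\mathbb{0}_{\cat{R}}}$ is essential.

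For the converse I would argue by contraposition. If $G\neq\mathbb{0}_{\cat{R}}$, then since any incidence forces a vertex through $\varsigma_{G}$ and an edge through $\omega_{G}$, at least one of $\check{V}(G)$ or $\check{E}(G)$ is nonempty. Should $\check{V}(G)\neq\emptyset$, condition (1) demands that $\check{V}(\eta_{G})$ be bijective; but its image is $\{1\}\times\check{V}(G)$, which omits $(0,0)\in\check{V}(\tilde{G})$, so surjectivity fails and condition (1) is violated. The case $\check{E}(G)\neq\emptyset$ is identical using condition (3) and the omitted edge $(0,0)$. In either case Proposition~\ref{essential-mono} certifies that $\eta_{G}$ is not essential, completing the equivalence.

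I do not expect a genuine obstacle here: the result is an immediate corollary of the essential-monic characterization once one notes that $\eta_{G}$ unconditionally introduces a new vertex and a new edge. The only point requiring a moment's care is the reduction ``$G\neq\mathbb{0}_{\cat{R}}$ implies $\check{V}(G)\neq\emptyset$ or $\check{E}(G)\neq\emptyset$,'' which rests on the fact that $I(G)\neq\emptyset$ cannot occur while both $\check{V}(G)$ and $\check{E}(G)$ are empty.
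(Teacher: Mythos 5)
Your proof is correct and follows the same route as the paper: both directions are read off from Proposition~\ref{essential-mono}, with the forward implication checked condition-by-condition for $G=\mathbb{0}_{\cat{R}}$ and the converse obtained by noting that $\check{V}(\eta_{G})$ (or $\check{E}(\eta_{G})$) misses $(0,0)$ and so fails bijectivity whenever $\check{V}(G)$ (or $\check{E}(G)$) is nonempty. Your write-up is simply a more detailed version of the paper's two-line argument, including the worthwhile observation that an incidence forces a vertex and an edge, which the paper leaves implicit.
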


\begin{proof}

$(\Leftarrow)$ A quick check of the conditions in Proposition \ref{essential-mono} proves this case.

$(\neg\Leftarrow\neg)$ If $\check{V}(G)\neq\emptyset$, then $\check{V}\left(\eta_{G}\right)$ is not bijective.  Dually, if $\check{E}(G)\neq\emptyset$, then $\check{E}\left(\eta_{G}\right)$ is not bijective.\qed

\end{proof}

Consequently, the construction of $\tilde{G}$ will be streamlined, much like \cite[Definition 3.3.3]{Grill2}, only adding what is necessary to satisfy the criteria for injectivity.  Equivalently,  this construction uniquely isolates the least injective subhypergraph of $\tilde{G}$ containing the image of $G$.

\begin{defn}[Loading]
Given an incidence hypergraph $G$, define the \emph{loading} of $G$ as the incidence hypergraph $L_{\cat{R}}(G)$ constructed as follows:
\begin{enumerate}
\item $\check{V}L_{\cat{R}}(G):=\left\{\begin{array}{cc}
\check{V}(G),	&	\check{V}(G)\neq\emptyset,\\
\{0\},	&	\check{V}(G)=\emptyset;\\
\end{array}\right.$
$\check{E}L_{\cat{R}}(G):=\left\{\begin{array}{cc}
\check{E}(G),	&	\check{E}(G)\neq\emptyset,\\
\{0\},	&	\check{E}(G)=\emptyset;\\
\end{array}\right.$
\item $IL_{\cat{R}}(G):=\begin{array}{l}
\left(\{1\}\times I(G)\right)
\cup
\left(\{0\}\times\left\{(v,e):\inc_{G}(v,e)=\emptyset\right\}\right)
\end{array},$
\item $\varsigma_{L_{\cat{R}}(G)}(a):=\left\{\begin{array}{cc}
\varsigma_{G}(i),	&	a=(1,i),\\
v,	&	a=(0,v,e),\\
\end{array}\right.$
$\omega_{L_{\cat{R}}(G)}(a):=\left\{\begin{array}{cc}
\omega_{G}(i),	&	a=(1,i),\\
e,	&	a=(0,v,e).\\
\end{array}\right.$
\end{enumerate}
Likewise, define an incidence hypergraph homomorphism $\xymatrix{G\ar[r]^(0.4){j_{G}} & L_{\cat{R}}(G)}$ by $\check{V}\left(j_{G}\right)(v):=v$, $\check{E}\left(j_{G}\right)(e):=e$, and $I\left(j_{G}\right)(i):=(1,i)$.
\end{defn}

\begin{figure}[!ht]
    \centering
    \includegraphics{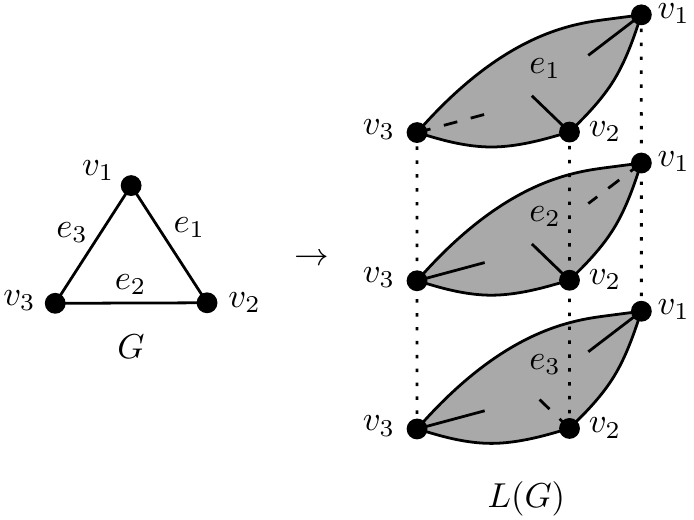}
    \caption{The incidence loading of $K_3$ to produce a uniform hypergraph. New incidences appear dashed within each hyperedge, and the vertices are identified along the dotted vertical lines.}
    \label{fig:LoadingK3}
\end{figure}

\begin{theorem}[Injective envelope]
For an incidence hypergraph $G$, $j_{G}$ is an essential monomorphism, and $L_{\cat{R}}(G)$ is injective with respect to incidence hypergraph monomorphisms.  Thus, $L_{\cat{R}}(G)$ equipped with $j_{G}$ is an injective envelope of $G$.  Moreover, $L_{\cat{R}}(G)$ is isomorphic to the unique minimal injective subhypergraph of $\tilde{G}$ containing the image of $G$ under $\eta_{G}$.
\end{theorem}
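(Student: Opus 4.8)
The plan is to play the loading construction against the two characterizations just proved, Proposition~\ref{injective} for injectivity and Proposition~\ref{essential-mono} for essentiality, and then to invoke the standard fact that an essential monomorphism into an injective object is an injective envelope, unique up to isomorphism \cite{borceux}. First I would note that $j_{G}$ is monic, since each of $\check{V}(j_{G})$, $\check{E}(j_{G})$, and $I(j_{G}):i\mapsto(1,i)$ is one-to-one, so $j_{G}$ is monic by \cite[Corollary I.2.15.3]{borceux}. For injectivity of $L_{\cat{R}}(G)$ I would verify the three conditions of Proposition~\ref{injective}: the vertex and edge sets are nonempty because the point $0$ is adjoined exactly when the corresponding set of $G$ is empty; and for the uniformity condition I would split on whether $\inc_{G}(v,e)=\emptyset$. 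If $\inc_{G}(v,e)\neq\emptyset$ the relabelled incidences $(1,i)$ already witness $\inc_{L_{\cat{R}}(G)}(v,e)\neq\emptyset$, while if $\inc_{G}(v,e)=\emptyset$ the loading adjoins the single incidence $(0,v,e)$. Hence every vertex--edge pair of $L_{\cat{R}}(G)$ carries an incidence, so $L_{\cat{R}}(G)$ is injective.

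Next I would check that $j_{G}$ satisfies the six conditions of Proposition~\ref{essential-mono}. Conditions (1)--(4) fall out immediately from the case split defining $\check{V}L_{\cat{R}}(G)$ and $\check{E}L_{\cat{R}}(G)$. The content lies in (5) and (6), and both rest on the defining feature of the loading: incidences are adjoined \emph{only} over pairs with $\inc_{G}(v,e)=\emptyset$, and \emph{exactly one} is adjoined there. For (5), a pair with $\inc_{G}(v,e)\neq\emptyset$ receives no false incidence, so $\inc_{L_{\cat{R}}(G)}(v,e)=\{(1,i):i\in\inc_{G}(v,e)\}=\mathcal{P}I(j_{G})\left(\inc_{G}(v,e)\right)$. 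For (6), a pair $(x,y)$ missed by every original incidence is precisely one with empty $\inc_{G}$, so it carries the single adjoined incidence $(0,x,y)$, giving $\card\left(\inc_{L_{\cat{R}}(G)}(x,y)\right)=1$. Thus $j_{G}$ is an essential monomorphism, and combined with the injectivity of $L_{\cat{R}}(G)$ this makes $L_{\cat{R}}(G)$ equipped with $j_{G}$ an injective envelope of $G$.

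For the identification inside $\tilde{G}$ I would exhibit the evident subhypergraph embedding $m:L_{\cat{R}}(G)\to\tilde{G}$ sending a vertex to $(1,v)$ or $(0,0)$, an edge to $(1,e)$ or $(0,0)$, a relabelled incidence $(1,i)$ to $(1,i)$, and a false incidence $(0,v,e)$ to $(0,m(v),m(e))$; this is monic and satisfies $m\circ j_{G}=\eta_{G}$, so its image $M$ is a subhypergraph of $\tilde{G}$ containing $\eta_{G}(G)$ and isomorphic to $L_{\cat{R}}(G)$. It remains to show $M$ is the unique minimal injective subhypergraph of $\tilde{G}$ containing $\eta_{G}(G)$. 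Here I would argue that any injective subhypergraph $K$ of $\tilde{G}$ containing $\eta_{G}(G)$ must, by Proposition~\ref{injective}, carry an incidence over every vertex--edge pair it spans; since $K$ already contains all the $(1,i)$, the only pairs needing new incidences are those with empty $\inc_{G}$, and the sole incidences $\tilde{G}$ supplies over such a pair are the unique false incidences $(0,\cdot,\cdot)$, forcing $K$ to contain every component of $M$.

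The step demanding the most care is this final minimality-and-uniqueness argument. Its subtlety is that one must use that $\tilde{G}$ furnishes \emph{exactly one} false incidence per vertex--edge pair, and no spare vertices or edges beyond $(0,0)$: this is what removes all freedom from the construction, so that there is a genuinely unique minimal injective subhypergraph rather than merely a minimal one up to choices of which false incidences to retain. Once minimality is pinned down, uniqueness up to isomorphism is consistent with, and reconfirmed by, the uniqueness of injective envelopes, closing the argument.
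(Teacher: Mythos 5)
Your proof is correct, and its first half --- verifying the conditions of Proposition~\ref{injective} for $L_{\cat{R}}(G)$ and of Proposition~\ref{essential-mono} for $j_{G}$ --- is exactly the paper's approach (the paper compresses these verifications to ``a quick check,'' which you carry out in detail; your case splits on $\inc_{G}(v,e)=\emptyset$ are the right ones). Where you genuinely diverge is the ``moreover'' clause. The paper does not construct the embedding into $\tilde{G}$ by hand: it obtains a map $\psi:L_{\cat{R}}(G)\to\tilde{G}$ with $\psi\circ j_{G}=\eta_{G}$ abstractly from the injectivity of $\tilde{G}$, deduces that $\psi$ is monic because $j_{G}$ is essential and $\eta_{G}$ is monic, and then computes that $\psi$ is forced on every added component ($\check{V}(\psi)(0)=(0,0)$ when $\check{V}(G)=\emptyset$, dually for edges, and $I(\psi)(0,v,e)$ must be the unique false incidence since $\inc_{\tilde{G}}$ of the image pair is a singleton), concluding that $\psi$ \emph{and hence its image} are uniquely determined --- a rigidity statement about the map itself, with minimality left to general injective-envelope theory. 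You instead exhibit the explicit embedding $m$ and prove a least-element statement: every injective subhypergraph $K$ of $\tilde{G}$ containing $\eta_{G}(G)$ must contain your image $M$, because Proposition~\ref{injective} forces $K$ to carry an incidence over each pair it spans, and over pairs with $\inc_{G}(v,e)=\emptyset$ the only incidence $\tilde{G}$ offers is the single false one (and when $\check{V}(G)=\emptyset$ the only available vertex is $(0,0)$, which you correctly flag). Your route buys a fully explicit proof that $M$ is the \emph{minimum} injective subhypergraph --- so uniqueness of the minimal one is immediate rather than inherited from envelope generalities --- while the paper's route buys the stronger observation that even the embedding $L_{\cat{R}}(G)\to\tilde{G}$ compatible with $\eta_{G}$ admits no freedom. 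Both arguments hinge on the same structural fact you isolate at the end: $\tilde{G}$ supplies exactly one false incidence per vertex--edge pair and no spare vertices or edges.
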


\begin{proof}

A quick check shows that $L_{\cat{R}}(G)$ satisfies Proposition \ref{injective}, and that $j_{G}$ satisfies Proposition \ref{essential-mono}.  As $\tilde{G}$ is injective and $j_{G}$ is monic, there is $\xymatrix{L_{\cat{R}}(G)\ar[r]^(0.6){\psi} & \tilde{G}}\in\cat{R}$ such that $\psi\circ j_{G}=\eta_{G}$.
\[\xymatrix{
\tilde{G}\\
G\textrm{ }\ar@{>->}[r]_{j_{G}}\ar[u]^{\eta_{G}}    &   L_{\cat{R}}(G)\ar@{..>}[ul]_{\exists\psi}\\
}\]
As $j_{G}$ is essential monic and $\eta_{G}$ is monic, $\psi$ is monic.  Thus, $L_{\cat{R}}(G)$ equipped with $\psi$ is a subobject of $\tilde{G}$.  By Theorem \ref{subobject}, $L_{\cat{R}}(G)$ with $\psi$ corresponds to a subhypergraph of $\tilde{G}$ via isomorphism.  For $v\in\check{V}(G)$, $e\in\check{E}(G)$, and $i\in I(G)$, one has
\begin{itemize}
\item $(1,v)=\check{V}\left(\eta_{G}\right)(v)=\check{V}\left(\psi\circ j_{G}\right)(v)=\check{V}\left(\psi\right)\left(\check{V}\left(j_{G}\right)(v)\right)=\check{V}\left(\psi\right)\left(v\right)\in\check{V}(\psi)\left(\check{V}\left(L_{\cat{R}}(G)\right)\right)$,
\item $(1,e)=\check{E}\left(\eta_{G}\right)(e)=\check{E}\left(\psi\circ j_{G}\right)(e)=\check{E}\left(\psi\right)\left(\check{E}\left(j_{G}\right)(e)\right)=\check{E}\left(\psi\right)\left(e\right)\in\check{E}(\psi)\left(\check{E}\left(L_{\cat{R}}(G)\right)\right)$,
\item $(1,i)=I\left(\eta_{G}\right)(i)=I\left(\psi\circ j_{G}\right)(i)=I\left(\psi\right)\left(I\left(j_{G}\right)(i)\right)=I\left(\psi\right)\left(1,i\right)\in I(\psi)\left(I\left(L_{\cat{R}}(G)\right)\right)$.
\end{itemize}
Hence, the image of $G$ under $\eta_{G}$ is contained within the image of $L_{\cat{R}}(G)$ under $\psi$.  If $\check{V}(G)=\emptyset$, then $\check{V}(\psi)(0)=(0,0)$.  Dually, $\check{E}(\psi)(0)=(0,0)$ if $\check{E}(G)=\emptyset$.  If $v\in\check{V}L_{\cat{R}}(G)$ and $e\in\check{E}L_{\cat{R}}(G)$ satisfy that $\inc_{G}(v,e)=\emptyset$, then,
\begin{itemize}
\item $\varsigma_{\tilde{G}}\left(I(\psi)\left(0,v,e\right)\right)=\check{V}(\psi)\left(\varsigma_{L_{\cat{R}}(G)}\left(0,v,e\right)\right)=\check{V}(\psi)(v)$,
\item $\omega_{\tilde{G}}\left(I(\psi)\left(0,v,e\right)\right)=\check{E}(\psi)\left(\omega_{L_{\cat{R}}(G)}\left(0,v,e\right)\right)=\check{E}(\psi)(e)$.
\end{itemize}
A calculation shows $\inc_{\tilde{G}}\left(\check{V}(\psi)(v),\check{E}(\psi)(e)\right)=\left\{\left(0,\check{V}(\psi)(v),\check{E}(\psi)(e)\right)\right\}$, which gives that $I(\psi)\left(0,v,e\right)=\left(0,\check{V}(\psi)(v),\check{E}(\psi)(e)\right)$.  Thus, $\psi$ and, consequently, its image are uniquely determined.\qed

\end{proof}

\section{Oriented Hypergraphic Total Minor Polynomials}
\label{sec:MTT}

\subsection{General Coefficient Theorems}

We demonstrate an oriented hypergraphic generalization of Chaiken's all-minors matrix-tree theorem \cite{Seth1} to all integer matrices using the injective envelope of the underlying incidence hypergraph, and the sign/monomial pair in the total minor polynomial generalizes Sachs' theorem \cite{Sim1, SGBook}. This is a strengthening of the results of \cite{OHSachs} while simultaneously providing insight on the connection between the boolean ideals of graph contributors and Tutte's arborescence theorem discussed in \cite{OHMTT}. On the other hand, a natural unifying theorem is unlikely to exist for the traditional theory of set systems and simple graphs as they lack two key qualities:  (1) mapping within an edge (i.e.\ locally graphic behavior) and (2) an injective envelope that forms a uniform hypergraph.  The locally  graphic property was key to the characterization given in \cite{AH1}, and the uniform hypergraph structure will be used in the present work below. Since $\cat{R}$ possesses a subobject classifier define $\delta_G(H)$ to be the $G$-subobject indicator that is $1$ if $H$ is a subobject of $G$ and $0$ otherwise. The \emph{$0$-loading of an oriented hypergraph $G$} is the oriented hypergraph $L^{0}(G)$ that is obtained by taking the loading of the underlying incidence hypergraph and extending the orientation function $\sigma$ to $\sigma_{0}$ where $\sigma_{0}\big|_{I_0} = 0$, where $I_0$ is the set of newly created incidence in the loading. 

Let $U, W \subseteq V$ such that $|U| = |W|$, and consider two total orderings of $U$ and $W$, denoted $\mathbf{u}$ and $\mathbf{w}$. The map $u_i \rightarrow w_i$ between these total orderings forms the \emph{$[\mathbf{u},\mathbf{w}]$-equivalence-class} of contributors, let ${\mathcal{C}}(G;\mathbf{u},\mathbf{w})$ be the set of contributors in $G$ where $c(u_i)=w_i$. Let $\widehat{\mathcal{C}}(G;\mathbf{u},\mathbf{w})$ be the set obtained by removing the $\mathbf{u} \rightarrow \mathbf{w}$ mappings from ${\mathcal{C}}(G;\mathbf{u},\mathbf{w})$, the elements of $\widehat{\mathcal{C}}(G;\mathbf{u},\mathbf{w})$ are called the \emph{reduced} $[\mathbf{u},\mathbf{w}]$-equivalent contributors. It is important to note some $[\mathbf{u},\mathbf{w}]$-equivalency classes may be empty for a given oriented hypergraph $G$, this is rectified in $L^{0}(G)$ where no class is empty, and the non-zero contributors correspond to the evaluations of the subobject indicator.

\begin{figure}[ht!]
    \centering
    \includegraphics[scale=1]{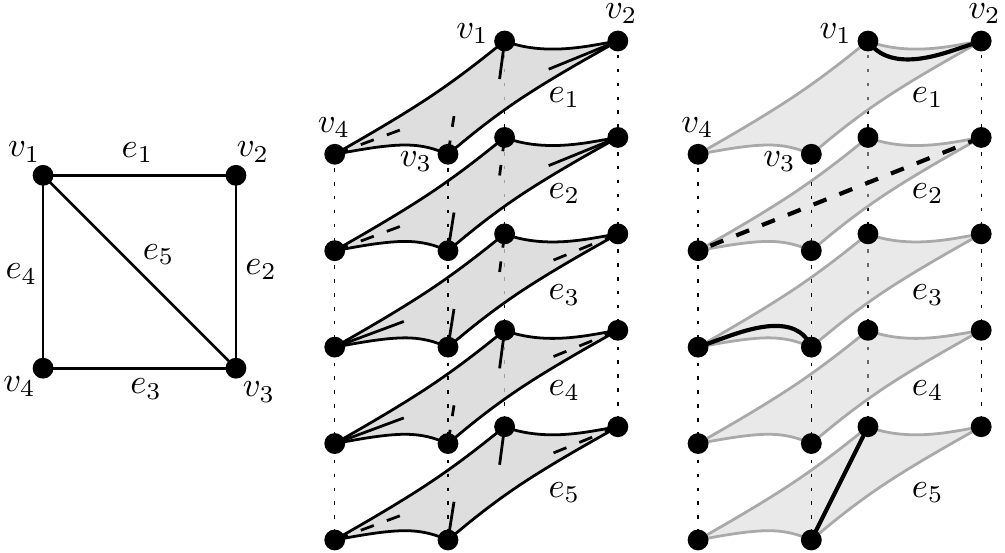}
    \caption{Left: A graph $G$; Middle: The loading of $G$ ($0$-incidences appear dashed); Right: An example where the contributor for permutation $(1243)$ only exists in the loading.}
    \label{fig:4vert}
\end{figure}

Reconstructing a reduced $[\mathbf{u},\mathbf{w}]$-equivalent contributor to a contributor, while not unique, always produces a contributor associated to the same permutation.

\begin{lemma}
\label{PermLemma}
For each $c \in \widehat{\mathcal{C}}(G;\mathbf{u},\mathbf{w})$, the set of all $\check{c} \in \mathcal{C}(G;\mathbf{u},\mathbf{w})$ formed by reintroducing $\mathbf{u} \rightarrow \mathbf{w}$ to $c$ are permutomorphic.
\end{lemma}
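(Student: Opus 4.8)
The plan is to show that the permutation carried by any reconstruction of $c$ is completely determined by $c$ together with the orderings $\mathbf{u}$ and $\mathbf{w}$, and in particular is insensitive to the edges and incidences used to realize the reintroduced paths. To make this precise, associate to a contributor $c:\coprod_{v\in V}\overrightarrow{P}_{1}\to G$ the permutation $\pi_{c}$ of $V$ given by $\pi_{c}(v):=c(h_{v})$, where $t_{v}$ and $h_{v}$ are the tail and head of the $v$-indexed copy of $\overrightarrow{P}_{1}$ (so $c(t_{v})=v$). Two contributors are permutomorphic exactly when their permutations agree, so it suffices to prove that every reconstruction $\check{c}$ yields one and the same $\pi_{\check{c}}$.

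First I would record what the reduction has already fixed. By definition $c\in\widehat{\mathcal{C}}(G;\mathbf{u},\mathbf{w})$ arises from some $c_{0}\in\mathcal{C}(G;\mathbf{u},\mathbf{w})$ by deleting the copies of $\overrightarrow{P}_{1}$ whose tails map into $U$; thus $c$ is an incidence-preserving map on $\coprod_{v\in V\setminus U}\overrightarrow{P}_{1}$ with $c(t_{v})=v$, and since the class condition $c_{0}(u_{i})=w_{i}$ forces $\pi_{c_{0}}$ to carry $U$ bijectively onto $W$, the surviving heads satisfy $\{c(h_{v}):v\in V\setminus U\}=\pi_{c_{0}}(V\setminus U)=V\setminus W$.

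Next I would analyze an arbitrary reconstruction. Reintroducing the $\mathbf{u}\to\mathbf{w}$ mapping means choosing, for each $i$, an incidence-preserving map of a copy of $\overrightarrow{P}_{1}$ into $G$ whose tail image is $u_{i}$ and whose head image is $w_{i}$, and adjoining these to $c$. Any such $\check{c}$ restricts to $c$ on $\coprod_{v\in V\setminus U}\overrightarrow{P}_{1}$ and sends the new tails bijectively onto $U$ and the new heads bijectively onto $W$; combined with the previous paragraph, the tails of $\check{c}$ cover $(V\setminus U)\cup U=V$ and the heads cover $(V\setminus W)\cup W=V$, each bijectively, so $\check{c}$ is genuinely a contributor lying in $\mathcal{C}(G;\mathbf{u},\mathbf{w})$. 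Its permutation agrees with $\pi_{c}$ on $V\setminus U$ and satisfies $\pi_{\check{c}}(u_{i})=w_{i}$ for every $i$, and these latter equalities are forced purely by the tail/head data, independent of which edge and incidences the chosen $\overrightarrow{P}_{1}$ traverses. Hence $\pi_{\check{c}}$ depends only on $c$, $\mathbf{u}$, and $\mathbf{w}$, so all reconstructions share a single permutation and are pairwise permutomorphic.

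The work here is bookkeeping rather than anything deep: the one point demanding care is verifying that reintroducing the forced head/tail data truly returns a contributor, i.e.\ that the head assignment remains a bijection of $V$. This is exactly where one uses that deleting the $U$-tailed paths removes precisely the heads in $W$, which in turn rests on the class condition $c_{0}(u_{i})=w_{i}$ and on the injectivity of the orderings $\mathbf{u}$ and $\mathbf{w}$.
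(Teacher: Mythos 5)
Your argument is correct. The paper in fact states Lemma \ref{PermLemma} without any proof, treating it as immediate from the definitions, and your tail/head bookkeeping is exactly the implicit justification: the permutation of any reconstruction is forced to agree with $c$ on $V\setminus U$ and to send $u_{i}\mapsto w_{i}$ regardless of which edges and incidences realize the reintroduced paths, with your one substantive check --- that the heads of the reduced contributor cover precisely $V\setminus W$, so every reconstruction is again a contributor in $\mathcal{C}(G;\mathbf{u},\mathbf{w})$ --- being the detail the paper leaves unstated.
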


Let $\mathbf{X}$ be the $V \times V$ matrix whose $ij$-entry is $x_{ij}$. Let $\chi ^{D}(\mathbf{M},\mathbf{x}):=\det (\mathbf{X-M)}$ be the determinant-based multivariable characteristic polynomial and $\chi ^{P}(%
\mathbf{M},x):=\mathrm{perm}(\mathbf{X-M)}$ be the permanent-based multivariable characteristic polynomial. Also, let $ec(c)$, $oc(c)$, $pc(c)$ and $nc(c)$ be the number of even, odd, positive, and negative components in a (sub-)contributor $c$, respectively. While $bs(c)$ denotes the number of backsteps in contributor $c$. It is worth noting that backsteps are technically negative weak walks that do not arise from adjacencies, but we choose to leave the count separate to illustrate the difference between Laplacian and adjacency matrix formulations.

\begin{theorem}[Total-minor Polynomial]
\label{t:Main}
Let $G$ be an oriented hypergraph with adjacency matrix $\mathbf{A}_{G}$ and Laplacian matrix $\mathbf{L}_{G}$, then 

\begin{enumerate}
\item $\chi ^{P}(\mathbf{A}_{G},\mathbf{x})=\dsum\limits_{ [\mathbf{u},\mathbf{w}]}
\left(
\dsum\limits_{\substack{s \in \widehat{\mathcal{S}}(L^{0}(G);\mathbf{u},\mathbf{w}) \\ \sgn(s) \neq 0}}
(-1)^{oc(s)+nc(s)}
\right)
\dprod\limits_{i}x_{u_{i},w_{i}}$,

\item $\chi ^{D}(\mathbf{A}_{G},\mathbf{x})=\dsum\limits_{ [\mathbf{u},\mathbf{w}]}
\left(
\dsum\limits_{\substack{s \in \widehat{\mathcal{S}}(L^{0}(G);\mathbf{u},\mathbf{w}) \\ \sgn(s) \neq 0}}
(-1)^{ec(\check{s})+oc(s)+nc(s)}
\right)
\dprod\limits_{i}x_{u_{i},w_{i}}$,

\item $\chi ^{P}(\mathbf{L}_{G},\mathbf{x})=\dsum\limits_{ [\mathbf{u},\mathbf{w}]}
\left(
\dsum\limits_{\substack{c \in \widehat{\mathcal{C}}(L^{0}(G);\mathbf{u},\mathbf{w}) \\ \sgn(c) \neq 0}}
(-1)^{nc(c)+bs(c)}
\right)
\dprod\limits_{i}x_{u_{i},w_{i}}$,

\item $\chi ^{D}(\mathbf{L}_{G},\mathbf{x})=\dsum\limits_{ [\mathbf{u},\mathbf{w}]}
\left(
\dsum\limits_{\substack{c \in \widehat{\mathcal{C}}(L^{0}(G);\mathbf{u},\mathbf{w}) \\ \sgn(c) \neq 0}}
(-1)^{ec(\check{c})+nc(c)+bs(c)}
\right)
\dprod\limits_{i}x_{u_{i},w_{i}}$.
\end{enumerate}
\end{theorem}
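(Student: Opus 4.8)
The plan is to verify all four identities simultaneously by fixing a monomial $\prod_i x_{u_i,w_i}$ and comparing its coefficient on each side. Since $\chi^{D}(\mathbf{M},\mathbf{x})$ and $\chi^{P}(\mathbf{M},\mathbf{x})$ are multilinear in the independent entries of $\mathbf{X}$, I would first use the Leibniz expansion of $\det(\mathbf{X}-\mathbf{M})$ (respectively the sign-free expansion of $\perm(\mathbf{X}-\mathbf{M})$) and split each factor as $(\mathbf{X}-\mathbf{M})_{v,w}=x_{v,w}-\mathbf{M}_{v,w}$. For a permutation $\pi$, the indices where the $x$-factor is chosen form a set $U$ with $\pi(U)=W$, producing the monomial indexed by the pair $(\mathbf{u},\mathbf{w})$, while the complementary indices $V\setminus U$ contribute a product of $-\mathbf{M}$-entries taken from rows $V\setminus U$ and columns $V\setminus W$. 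Thus each coefficient is, up to the cofactor sign, the determinant or permanent of the complementary minor of $-\mathbf{M}$, which is the all-minors reduction.

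Next I would read that minor combinatorially. Expanding each entry of $\mathbf{A}_{G}$ as the sum of $\sgn(q)$ over incidence-monic maps $q\colon\overrightarrow{P}_{1}\to G$, and for the Laplacian using $-\mathbf{L}_{G}=\mathbf{A}_{G}-\mathbf{D}_{G}$ so that diagonal factors contribute backsteps through non-incidence-monic maps, each surviving term assembles one $\overrightarrow{P}_{1}$-image per index of $V\setminus U$. Together with the deleted $\mathbf{u}\to\mathbf{w}$ assignments this is precisely a reduced $[\mathbf{u},\mathbf{w}]$-equivalent contributor: strong contributors $\widehat{\mathcal{S}}(G;\mathbf{u},\mathbf{w})$ in the adjacency cases and all contributors $\widehat{\mathcal{C}}(G;\mathbf{u},\mathbf{w})$ in the Laplacian cases. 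Lemma \ref{PermLemma} guarantees that reintroducing the $\mathbf{u}\to\mathbf{w}$ assignments yields a full contributor $\check{c}$ with a well-defined underlying permutation, so the minor-term to contributor assignment is consistent and the cofactor sign can be absorbed into $\check{c}$.

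The sign accounting is the heart of the argument and the step I expect to be most delicate. The weight of each adjacency is $\sgn(q)=-\sigma(i_{1})\sigma(i_{2})$, so the weak-walk sign of any component of the reduced contributor with $m$ adjacencies is $(-1)^{m}\prod\sigma$, which collapses to the sign of that component and yields the $(-1)^{nc(c)}$ factors, while each backstep carries weight $-1$ and yields $(-1)^{bs(c)}$. In the adjacency cases the factor $-\mathbf{A}_{G}$ supplies an extra $-1$ per adjacency, so the residual $(-1)^{m}$ per component contributes $(-1)^{oc(s)}$, producing the additional factor absent from the Laplacian formulas; in the Laplacian cases this sign instead migrates onto the backsteps via $-\mathbf{D}_{G}$. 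For the permanents there is no permutation sign, whereas for the determinants one multiplies by $\sgn(\pi)$, which factors over the circles of $\check{c}$ into $(-1)^{\ell-1}$ per circle and hence equals $(-1)^{ec(\check{c})}$; this is exactly why the even circles are counted in the reconstructed $\check{c}$, where each broken path has closed into a circle, while the odd, negative, and backstep invariants are read from the reduced $c$. A consistency check is that specializing to $x_{ii}=x$ and $x_{ij}=0$ for $i\neq j$ must recover Theorem \ref{OHSachsT}.

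Finally I would transfer from $G$ to its $0$-loading $L^{0}(G)$. Every $[\mathbf{u},\mathbf{w}]$-class is nonempty in $L^{0}(G)$, but any contributor using a loading incidence carries the factor $\sigma_{0}=0$ and hence has $\sgn(c)=0$; restricting to $\sgn(c)\neq 0$ therefore selects exactly the contributors supported on $G$, namely those for which the subobject indicator $\delta_{G}$ equals $1$. Consequently the sign-restricted sums over $L^{0}(G)$ equal the contributor sums computing the minors of $\mathbf{M}$, completing the coefficient-by-coefficient identification. The principal obstacle throughout is executing this sign bookkeeping uniformly across the four cases while respecting the reduced-versus-reconstructed distinction forced by Lemma \ref{PermLemma}, and verifying that the cofactor sign of the minor expansion is absorbed precisely by the even-circle count of $\check{c}$.
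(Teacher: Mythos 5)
Your proposal is correct and follows essentially the same route as the paper's proof: a Leibniz/permanent expansion of $\det(\mathbf{X}-\mathbf{M})$ or $\perm(\mathbf{X}-\mathbf{M})$ with each entry split into its variable and matrix parts, grouping by the set $U$ of variable-chosen indices, identification of the complementary products with reduced $[\mathbf{u},\mathbf{w}]$-equivalent (strong) contributors, the identical sign bookkeeping yielding $(-1)^{oc}$, $(-1)^{nc}$, $(-1)^{bs}$, and $(-1)^{ec(\check{c})}$ via Lemma \ref{PermLemma}, and the passage to $L^{0}(G)$ where the $\sgn\neq 0$ restriction plays the role of the subobject indicator $\delta_{G}$. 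The only cosmetic deviations --- organizing by a fixed monomial's cofactor-minor rather than distributing choice functions over $S_{V}$, and skipping the paper's intermediate step through $L(G)$ with arbitrarily extended orientations before specializing to the $0$-loading --- do not change the substance of the argument.
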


\begin{proof}
The first half of the proof is an adaptation of the author's work in \cite[Theorem 4.2.1]{OHSachs}, before utilizing the injective closure and the zero-loading of the incidence hypergraph.

Let $p : \overrightarrow{P}_1 \rightarrow G$, and let $q$ denote an incidence-monic map from $\overrightarrow{P}_1 \rightarrow G$. For a given permutation $\pi \in S_{V}$, let $\mathcal{P}_{\pi} = \{p \mid p(t)=v \text{ and } p(h)=\pi(v)\}$, and $\mathcal{Q}_{\pi}$ be defined similarly for incidence monic maps.

\textit{Proof of 1.} 
For a given permutation $\pi$ and vertex $v$ let $\alpha :v\rightarrow \left\{ x_{v,\pi(v)},-\dsum\limits_{q \in \mathcal{Q}_{\pi}}\sgn(q(\overrightarrow{P}_{1})) \right\}$ be the function that chooses either the variable or the value at coordinate $(v,\pi(v))$. Let $\mathcal{A}_{\pi }$ be the set of all $\alpha$ for a given $\pi$.

Thus, $\chi ^{P}(\mathbf{A}_{G},\mathbf{x})$ can be written as

\begin{eqnarray*}
\chi ^{P}(\mathbf{A}_{G},\mathbf{x}) &=&\mathrm{perm}(\mathbf{X-A}_{G}) \\
&=&\sum\limits_{\pi \in S_{V}}\prod\limits_{v\in V}\sum\limits_{\alpha \in 
\mathcal{A}_{\pi }}\alpha (v)\text{.}
\end{eqnarray*}
Distributing we get
\begin{equation*}
=\sum\limits_{\pi \in S_{V}}\sum\limits_{\beta \in \mathcal{B}_{\pi
}}\prod\limits_{v\in V}\beta (v)\text{,}
\end{equation*}
where $\mathcal{B}_{\pi }$ is the set of all functions $\beta :V\rightarrow \left\{ x_{v,\pi(v)},
-\dsum\limits_{q \in \mathcal{Q}_{\pi}}\sgn(q(\overrightarrow{P}_{1}))\right\} $. This can be recognized as passing to the $\Set$ exponential. For each $\beta \in \mathcal{B}_{\pi }$ let $U_{\beta }\subseteq V$ 
be the set of vertices mapped to an $x_{v,\pi(v)}$.

This gives:
\begin{eqnarray*}
=\sum\limits_{\pi \in S_{V}}\sum\limits_{\beta \in 
\mathcal{B}_{\pi }}\left[ \left(\prod\limits_{u\in\overline{U}_{\beta }}\beta (v)\right)  
\prod\limits_{u\in {U}_{\beta }}x_{u,\pi(u)}
\right] \text{.}
\end{eqnarray*}
Evaluating $\beta (v)$ we have:
\begin{eqnarray*}
=\sum\limits_{\pi \in S_{V}}\sum\limits_{\beta \in 
\mathcal{B}_{\pi }}\left[ \left(\prod\limits_{u\in\overline{U}_{\beta }}\dsum\limits_{q \in \mathcal{Q}_{\pi}(G|\overline{U}_{\beta})}-\sgn(q(\overrightarrow{P}_{1}))\right)  
\prod\limits_{u\in {U}_{\beta }}x_{u,\pi(u)}
\right] \text{.}
\end{eqnarray*}
Where $\mathcal{Q}_{\pi}(G|\overline{U}_{\beta})$ is the set of maps $q$ whose tail-set is $\overline{U}_{\beta}$ and head-set is $\pi(\overline{U}_{\beta})$.  Distributing again produces:
\begin{eqnarray*}
=\sum\limits_{\pi \in S_{V}}
\sum\limits_{U\subseteq V }
\left[
\sum\limits_{s \in {\mathcal{S}}_{\pi }(G|\overline{U}))}
\left(
\prod\limits_{u\in\overline{U}} \sigma (s(i_{v}))\sigma (s((j_{v})) \right)
\right]
\prod\limits_{u\in {U}}x_{u,\pi(u)}  \text{,}
\end{eqnarray*}
where $\mathcal{S}_\pi(G|\overline{U})$ is the restricted set of strong contributors that correspond to permutation $\pi$ with tails at $\overline{U}$.

Now pass to the injective envelope of the underlying incidence hypergraph and extend the incidence orientation function $\sigma$ to $\sigma_{L}$ 
such that $\sigma_{L}(i)=\sigma(i)$ for all $i \in I(G)$ and the new incidence orientations are assigned arbitrary. Using the $G$-subobject indicator $\delta_G$ the sum can be rewritten as:
\begin{eqnarray*}
=\sum\limits_{\pi \in S_{V}}
\sum\limits_{U\subseteq V }
\left[
\sum\limits_{s \in {\mathcal{S}}_{\pi }(L(G))}
\delta_G(s|\overline{U}) \left(
\prod\limits_{u\in\overline{U}} \sigma_{L} (s(i_{v}))\sigma_{L} (s((j_{v})) \right)
\right]
\prod\limits_{u\in {U}}x_{u,\pi(u)}  \text{.}
\end{eqnarray*}

The product of signs is evaluated by first factoring out a negative for each adjacency producing a value of $(-1)^{oc(s)}$, and then factoring out a negative for each negative adjacency producing a value of $(-1)^{nc(s)}$ --- leaving behind only $+1$'s for all adjacencies, and reducing to a count of subcontributors of the underlying incidence hypergraph,
\begin{eqnarray*}
=\sum\limits_{\pi \in S_{V}}
\sum\limits_{ U\subseteq V }
\left[
\sum\limits_{s \in {\mathcal{S}}_{\pi }(L(G))}
\delta_G(s|\overline{U}) \cdot (-1)^{oc(s)+nc(s)}
\right]
\prod\limits_{u\in {U}}x_{u,\pi(u)} \text{.}
\end{eqnarray*}
Resolving $\delta_G$ and letting $w_i = \pi(u_i)$, we pass to the $0$-loading $L^{0}(G)$ of the oriented hypergraph and combine the first two sums.
\begin{eqnarray*}
=\sum\limits_{ [\mathbf{u},\mathbf{w}]}
\left(
\sum\limits_{\substack{s \in \widehat{\mathcal{S}}(L^{0}(G);\mathbf{u},\mathbf{w}) \\ \sgn(s) \neq 0}}
(-1)^{oc(s)+nc(s)}
\right)
\prod\limits_{i}x_{u_{i},w_{i}}  \text{.}
\end{eqnarray*}

\textit{Proof of 2.}
Proceeding as in part 1 with the inclusion of the sign of the permutation we get
\begin{eqnarray*}
\chi ^{D}(\mathbf{A}_{G},\mathbf{x}) &=&\det(\mathbf{X-A}_{G}) \\
&=&
\sum\limits_{\pi \in S_{V}} \epsilon(\pi) \sum\limits_{ U\subseteq V } 
\left[
\sum\limits_{s \in \mathcal{S}_{\pi }(L(G))}
\delta_G(s|\overline{U}) \cdot
(-1)^{oc(s)+nc(s)}
\right]
\prod\limits_{u\in {U}} x_{u,\pi(u)}  \text{.}
\end{eqnarray*}

Using the fact that the sign of a permutation is equal to $(-1)^{ec(\pi)}$, where $ec(\pi)$ is the number of even algebraic cycles in $\pi$, and each contributor is associated to a unique permutation we have
\begin{eqnarray*}
= \sum\limits_{\pi \in S_{V}} \sum\limits_{ U\subseteq V }
\left[
\sum\limits_{s \in \mathcal{S}_{\pi }(L(G))} (-1)^{ec(\check{s})} \cdot
\delta_G(s|\overline{U}) \cdot 
(-1)^{oc(s)+nc(s)}
\right]
\prod\limits_{u\in {U}} x_{u,\pi(u)}  \text{.}
\end{eqnarray*}

Again, resolve $\delta_G$ but this time observe that the $(-1)^{oc(s)+nc(s)}$ values are for subcontributors where $U \rightarrow \pi(U)$ is removed, while the value $(-1)^{ec(s)}$ remains unchanged as it is determined by a permutation. Let $\check{s}$ be any maximal contributor obtained by extending the subcontributor $s$ by $U \rightarrow \pi(U)$, all such contributors are permutomorphic by Lemma \ref{PermLemma}.
\begin{eqnarray*}
=\sum\limits_{ [\mathbf{u},\mathbf{w}]}
\left(
\sum\limits_{\substack{s \in \widehat{\mathcal{S}}(L^{0}(G);\mathbf{u},\mathbf{w}) \\ \sgn(s) \neq 0}}
(-1)^{ec(\check{s})+oc(s)+nc(s)}
\right)
\prod\limits_{i}x_{u_{i},w_{i}}  \text{.}
\end{eqnarray*}

\textit{Proofs of 3. and 4.}
The proofs for the Laplacian are similar with the following modifications: (1) switch from incidence-monic maps $\mathcal{Q}_{\pi}$ to arbitrary maps $\mathcal{P}_{\pi}$ to allow backsteps and sum over contributors instead of strong contributors; (2) since $\mathbf{L}_{G} = \mathbf{D}_{G} -\mathbf{A}_{G}$ there is no need to factor out a $-1$ for each adjacency, and instead factor out a $-1$ for each backstep.\qed
\end{proof}

\subsubsection{Examples}

\begin{example}
Consider the $K_3$ example from Subsection \ref{ssec:MTTandSach}. This is the incidence graph $G_1$ from Figure \ref{fig:K3Graph} with the $16$ contributors appearing on the left of Figure \ref{Fig:Hypergraph3}. We know $\det(x\mathbf{I}-\mathbf{A})=x^3-3x-2$ and $\perm(x\mathbf{I}-\mathbf{A})=x^3+3x-2$. There are two strong contributors, namely the two $3$-cycles. These provide a constant of $-2$ in the adjacency matrix for both the determinant and permanent characteristic polynomials as each has no isolated vertices, both have odd parity, and neither is negative --- thus each has a value of $(-1)^{0+1}$. Moreover, the largest magnitude the constant could be is $2$ as there are two strong contributors. The sign difference on the $x$ term is clear from the even parity inclusion from Theorem \ref{t:Main}.

For the Laplacian we have $\det(x\mathbf{I}-\mathbf{L})=x^3-6x^2+9x$. The maximum magnitude for the Laplacian constant is $16$, the number of contributors. The actual Laplacian constant term is $0$ as the contributors fall into alternating signed Boolean lattices and sum to $0$; see Figure \ref{Fig:Hypergraph3} and \cite{OHMTT} for more details.
\end{example}

\begin{example}
We now expand on our $K_3$ example to determine $\chi ^{P}(\mathbf{A}_{G_1},\mathbf{x})$. The constant term will still be produced by the two $3$-cycle strong contributors, however, in $\chi ^{P}(\mathbf{A}_{G_1},\mathbf{x})$, the subcontributors also contribute additional monomials shown in Figure \ref{fig:K3SubCont}.

\begin{figure}[ht!]
    \centering
    \includegraphics{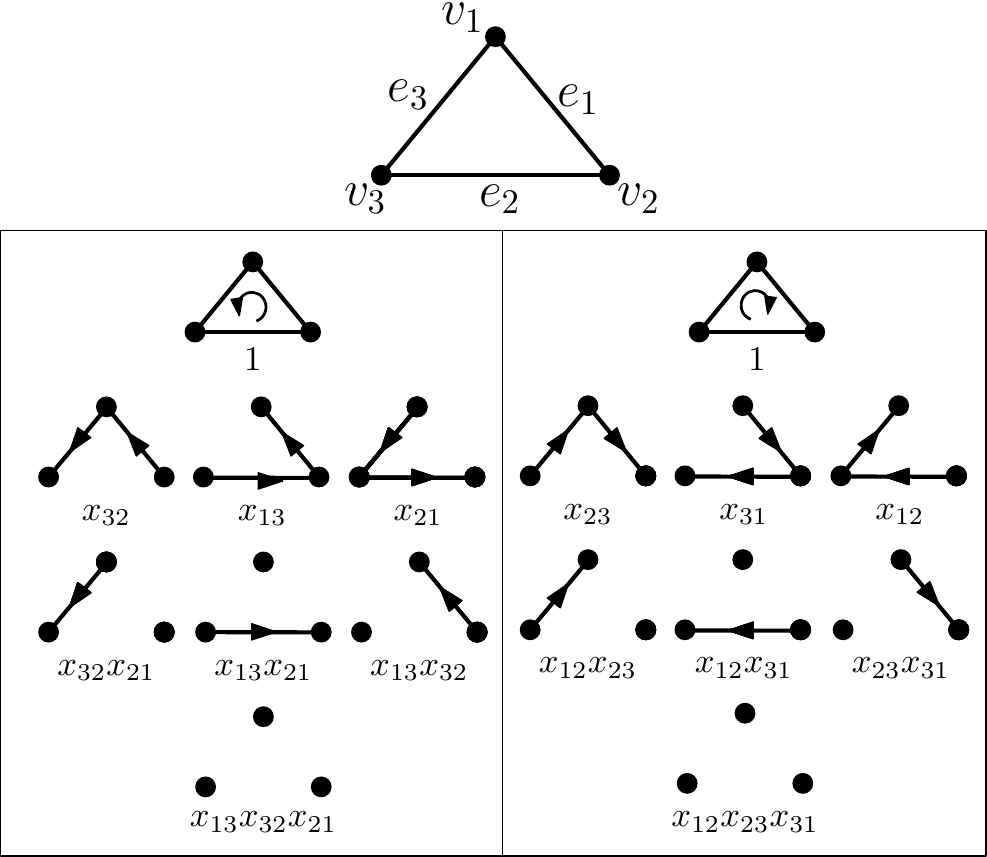}
    \caption{The two non-zero strong contributors of $K_3$ and their subcontributors ordered by monomial division.}
    \label{fig:K3SubCont}
\end{figure}
 
Thus, $\chi ^{P}(\mathbf{A}_{G_1},\mathbf{x})$ would contain the following expression resulting from the strong contributor resulting from permutation $(123)$:
$$
x_{12}x_{23}x_{31}-x_{12}x_{23}-x_{12}x_{31}-x_{23}x_{31}+x_{12}+x_{23}+x_{31}-1
$$
where the coefficients are determined by the formula $(-1)^{oc(s)+nc(s)}$ for each subcontributor. Here, there are no negative edges, so the sign is determined by the odd parity only. Note, that in the determinant case the value $(-1)^{ec(\check{s})}$ is determined by the maximal strong contributor corresponding to the constant coefficient. Since Figure \ref{fig:K3SubCont} contains all the restrictions of the strong subcontributors, the maximum magnitude of the coefficients of the adjacency matrix of a signed $K_3$ are $1$ for the monomials listed, with the exception of $2$ for the constant.
\end{example}

\begin{example}
Consider the oriented hypergraph $(G_2,\sigma_2)$ from Figure \ref{fig:2Orientations} with contributors listed on the right of Figure \ref{Fig:Hypergraph3}. We have
\begin{eqnarray*}
\chi ^{D}(\mathbf{L}_{(G_2,\sigma_2)},\mathbf{x})&=&\det{(\mathbf{X}-\mathbf{L}_{(G_2,\sigma_2)})}=\det\left[
\begin{array}{ccc}
x_{11}-1 & x_{12}-1 & x_{13}+1 \\ 
x_{21}-1 & x_{22}-1 & x_{23}+1 \\ 
x_{31}+1 & x_{32}+1 & x_{33}-1 %
\end{array}
\right]\\
&=&x_{11}x_{22}x_{33}-x_{11}x_{23}x_{32}-x_{13}x_{22}x_{31}-x_{12}x_{21}x_{33}+x_{12}x_{23}x_{31}+x_{13}x_{21}x_{32}\\
&-&x_{11}x_{22}-x_{11}x_{23}-x_{11}x_{32}-x_{11}x_{33}-x_{13}x_{22}-x_{22}x_{31}-x_{22}x_{33}-x_{23}x_{31}-x_{13}x_{32}\\
&+&x_{12}x_{21}+x_{13}x_{21}+x_{12}x_{23}+x_{12}x_{31}+x_{13}x_{31}+x_{21}x_{32}+x_{23}x_{32}+x_{12}x_{33}+x_{12}x_{33}
\end{eqnarray*}
where the constant and linear terms all have coefficient zero. The non-zero coefficients are obtained via restricted contributors from Figure \ref{Fig:Hypergraph3} and the signing from Theorem \ref{t:Main}
\end{example}

\subsection{Bidirected Graphs and  k-arborescences}

Building on the work in \cite{OHMTT}, we group contributors of bidirected graphs into Boolean activation classes, and show the single-element classes for a given degree-$k$ monomial are in one-to-one correspondence with Tutte's $k$-arborescences. Moreover, the remaining elements in the activation class provide an upper bound on absolute value of the coefficient for the associated monomial.

First we collect the relevant definitions from \cite{OHMTT}. A \emph{pre-contributor of }$\emph{G}$ is an incidence preserving function $p:\coprod\limits_{v\in V}\overrightarrow{P}_{1}\rightarrow G$ with $p(t_{v})=v$. For a pre-contributor $p$ with $p(t_{v})\neq p(h_{v})$, define \emph{packing a directed adjacency of a pre-contributor }$p$\emph{\ into
a backstep at vertex }$v$ to be a pre-contributor $p_{v}$ such that $p_{v}=p$
for all $u\in V\smallsetminus v$, and for vertex $v$ 
\begin{eqnarray*}
p((\overrightarrow{P}_{1})_{v}) &=&(v,i,e,j,w)\text{, }i\neq j\text{,} \\
\text{and }p_{v}((\overrightarrow{P}_{1})_{v}) &=&(v,i,e,i,v)\text{.}
\end{eqnarray*}%
Thus, the head-incidence and head-vertex of adjacency $p((\overrightarrow{P}%
_{1})_{v})$ are identified to the tail-incidence and tail-vertex. It is clear that this is equivalent to \emph{tail-equivalence}. However, since all edges in a bidirected graph have size equal to $2$ there is a unique target for the head vertex to map to. \emph{Unpacking a backstep of a pre-contributor }$p$\emph{\ into an
adjacency out of vertex }$v $\emph{\ }is a pre-contributor $p^{v}$ defined analogously where, for vertex $v$ the head-incidence and head-vertex of backstep $p((\overrightarrow{P}_{1})_{v})$ are identified to the unique incidence and vertex that would complete the adjacency in bidirected graph $G$. \emph{Activating
a circle of contributor }$c$ is a minimal sequence of unpackings that results in a new contributor, and define the \emph{activation partial order} $\leq _{a}$ where $c\leq _{a}d$ if $d$ is formed by a sequence of activations starting with $c$. This induces the \emph{activation equivalence relation} $\sim _{a}$where $c\sim _{a}d$ if $c\leq _{a}d$ or $d\leq _{a}c$, and the elements of $\mathcal{C}(G)/\sim _{a}$ are called the \emph{activation classes of }$G$. Let $\mathcal{A}(\mathbf{u};\mathbf{w};G)$ denote the $[\mathbf{u}, \mathbf{w}]$-equivalent elements in activation class $\mathcal{A}$, and let $\hat{\mathcal{A}}(\mathbf{u};\mathbf{w};G)$ be the elements of $\mathcal{A}(\mathbf{u};\mathbf{w};G)$ with the adjacency or backstep from $u_{i}$ to $w_{i}$ is removed for each $i$. 

\begin{lemma}[\cite{OHMTT}, Lemma 3.6]
\label{boolean} For a bidirected graph $G$, all activation classes of $G$ are Boolean lattices.
\end{lemma}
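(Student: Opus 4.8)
The plan is to exhibit each activation class as (order-)isomorphic to the Boolean lattice on the set of its activatable circles. The first step is to isolate the invariant of the pack/unpack operations. Observe that packing the adjacency $(v,i,e,j,w)$ into the backstep $(v,i,e,i,v)$, together with its inverse unpacking, preserves the \emph{tail incidence} $i$ at $v$ while altering only the head incidence and head vertex. Since every activation is a sequence of unpackings (and its reverse is a sequence of packings), the assignment $\iota:v\mapsto i_{v}$ sending each vertex to the incidence it traverses first is constant along $\le_{a}$ and hence on each $\sim_{a}$-class. I would first prove this invariance and conclude that each activation class is contained in the set $\mathcal{C}_{\iota}(G)$ of contributors sharing a fixed tail-incidence assignment $\iota$.

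Next I would use the bidirected hypothesis to pin down $\mathcal{C}_{\iota}(G)$ exactly. Because every edge has size two, the incidence $i_{v}$ lies on an edge $e_{v}=\omega(i_{v})$ carrying a \emph{unique} second incidence $j_{v}$; setting $\phi(v):=\varsigma(j_{v})$ yields a well-defined function $\phi:V\to V$. The path at $v$ in any $c\in\mathcal{C}_{\iota}(G)$ is then forced to be one of two things: the backstep $(v,i_{v},e_{v},i_{v},v)$, giving the fixed point $v\mapsto v$, or the unique adjacency/loop $(v,i_{v},e_{v},j_{v},\phi(v))$, giving $v\mapsto\phi(v)$. Thus $c$ is completely determined by its \emph{go-set} $S:=\{v: c \text{ uses the adjacency at } v\}$, the induced head-permutation being the map $\pi_{S}$ equal to $\phi$ on $S$ and to the identity off $S$; this determinism is precisely where size two is used.

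The third step characterizes the admissible go-sets. A short computation shows $\pi_{S}$ is a bijection of $V$ if and only if $S$ is $\phi$-invariant and $\phi|_{S}$ is injective, i.e.\ precisely when $S$ is a union of the cycles $C_{1},\dots,C_{m}$ of the functional graph of $\phi$. These cycles are exactly the activatable circles: a minimal unpacking sequence that restores the contributor condition adjoins exactly one $C_{j}$ to $S$. Consequently $c\mapsto S_{c}$ is an order-isomorphism from the activation class onto $\bigl(2^{\{C_{1},\dots,C_{m}\}},\subseteq\bigr)$, a Boolean lattice; the all-backstep contributor $S=\emptyset$ is its unique minimum, which also confirms that the $\sim_{a}$-classes coincide with the tail-incidence classes.

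The main obstacle is the third step, namely verifying that the circles activate \emph{independently}, so the reachable configurations form the \emph{full} power set rather than a proper sublattice. This is exactly where the size-two hypothesis is indispensable: it makes each ``go'' choice deterministic (a single target $\phi(v)$), so admissibility of $S$ collapses to the single condition that $S$ be a union of $\phi$-cycles, and distinct cycles impose no joint constraints. I would take care with loop edges, where $\phi(v)=v$ and activation converts a backstep into an orientable loop; these are length-one circles and fit the same framework with no special treatment.
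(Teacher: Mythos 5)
Your proof is correct, but note an oddity of the comparison: this paper never proves the statement at all --- it is imported verbatim from \cite{OHMTT} (Lemma 3.6), so your argument can only be measured against that source, not against anything in this document. Your route is essentially the mechanism underlying the cited proof, just organized more explicitly. The source works from the unique totally packed (all-backstep) minimal element of each class and argues that activatable circles are vertex-disjoint and activate independently, yielding the power-set structure; you recover exactly this by isolating the tail-incidence invariant $\iota$, encoding the forced head choice in the function $\phi(v):=\varsigma(j_{v})$ --- which is the one place edge-size two enters, precisely as the present paper remarks (``since all edges in a bidirected graph have size equal to $2$ there is a unique target'') --- and then characterizing admissible go-sets. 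Your step-3 computation checks out: surjectivity of $\pi_{S}$ forces $S\subseteq\phi(S)$, hence $\phi(S)=S$ with $\phi|_{S}$ bijective by finiteness, so $S$ is a union of $\phi$-cycles, and this is the rigorous content of the ``disjoint circles activate independently'' claim. Your concluding identification of activation classes with tail-equivalence classes also matches the paper's own aside that packing ``is equivalent to tail-equivalence.'' Two small points worth recording if you write this up: the relation $\sim_{a}$ as literally defined ($c\le_{a}d$ or $d\le_{a}c$) is not transitive, so one must pass to the generated equivalence --- your argument absorbs this automatically, since every element of $\mathcal{C}_{\iota}(G)$ lies above the common all-backstep bottom; and besides the loop case you flagged, the degenerate $2$-cycle in which $e_{v}=e_{w}$ (both adjacencies traverse a single edge, the closed walk counted once in Example 1.2.4) needs no special treatment either, since it is just an ordinary $2$-cycle of the functional graph of $\phi$.
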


\begin{lemma}[\cite{OHMTT}, Theorem 3.11]
The elements of $\mathcal{A}(\mathbf{u};\mathbf{w};G)$ form a sub-Boolean lattice of $\mathcal{A}$ determined by sequential order ideals.
\end{lemma}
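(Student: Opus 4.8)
The plan is to use Lemma \ref{boolean} to give $\mathcal{A}$ an explicit coordinate description and then read off the $[\mathbf{u},\mathbf{w}]$-restriction as an interval. Since $\mathcal{A}$ is a Boolean lattice, its bottom element is the fully-packed contributor $\hat{0}$, in which every vertex carries a backstep; let $f\colon V\to\check{E}(G)$ record the backstep edge at each vertex and let $g\colon V\to V$ send $v$ to the unique other endpoint of $f(v)$, which is well-defined because $G$ is bidirected and every edge has size $2$. The atoms of $\mathcal{A}$ are precisely the activations of the directed circles formed by these backstep edges, i.e.\ the directed cycles of the endpoint function $g$; because the cycles of a function are vertex-disjoint (equivalently, because the atoms of a Boolean lattice are independent), the activatable circles $\mathcal{O}=\{O_{1},\dots,O_{r}\}$ are pairwise vertex-disjoint, and every element of $\mathcal{A}$ is obtained by activating some subset $S\subseteq\mathcal{O}$. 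Writing $c_{S}$ for this contributor gives the isomorphism $\mathcal{A}\cong 2^{\mathcal{O}}$, where $c_{S}(v)=g(v)$ when $v$ lies on a circle in $S$ and $c_{S}(v)=v$ otherwise.

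Next I would translate each constraint $c_{S}(u_{i})=w_{i}$ into a single coordinate condition on $S$. By vertex-disjointness, $u_{i}$ lies on at most one circle $O_{j(i)}\in\mathcal{O}$. If $u_{i}$ lies on no activatable circle, then $c_{S}(u_{i})=u_{i}$ for every $S$, so the constraint is vacuous when $w_{i}=u_{i}$ and unsatisfiable otherwise. If $u_{i}$ lies on $O_{j(i)}$, then $c_{S}(u_{i})$ equals $g(u_{i})$ or $u_{i}$ according to whether $O_{j(i)}\in S$, so the constraint forces $O_{j(i)}\in S$ when $w_{i}=g(u_{i})$, forces $O_{j(i)}\notin S$ when $w_{i}=u_{i}$, and is unsatisfiable when $w_{i}\notin\{u_{i},g(u_{i})\}$. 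Collecting the forced-in circles into $J_{+}$ and the forced-out circles into $J_{-}$, the set $\mathcal{A}(\mathbf{u};\mathbf{w};G)$ is empty if any constraint is unsatisfiable or if $J_{+}\cap J_{-}\neq\emptyset$, and otherwise equals $\{c_{S}:J_{+}\subseteq S\subseteq\mathcal{O}\setminus J_{-}\}$, i.e.\ the interval $[J_{+},\mathcal{O}\setminus J_{-}]$ of $2^{\mathcal{O}}$. An interval of a Boolean lattice is closed under meet and join and is itself Boolean (isomorphic to $2^{(\mathcal{O}\setminus J_{-})\setminus J_{+}}$), so $\mathcal{A}(\mathbf{u};\mathbf{w};G)$ is a sub-Boolean lattice of $\mathcal{A}$.

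To match the phrase \emph{determined by sequential order ideals}, I would then process the pairs $(u_{1},w_{1}),\dots,(u_{k},w_{k})$ in the order prescribed by $\mathbf{u}\to\mathbf{w}$ and observe that each backstep constraint ($w_{i}=u_{i}$) cuts the current sublattice down to the principal order ideal $\{S:O_{j(i)}\notin S\}$, while each adjacency constraint ($w_{i}=g(u_{i})$) cuts it down to the dual principal order ideal $\{S:O_{j(i)}\in S\}$. The running intersection is always an interval, and after all $k$ coordinates it is exactly $[J_{+},\mathcal{O}\setminus J_{-}]$; thus the restricted class is realized as a nested sequence of order-ideal (and dually filter) restrictions, one per coordinate of $\mathbf{u}$. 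This also recovers the single-element classes used later: the interval collapses to a point precisely when $J_{+}\cup J_{-}=\mathcal{O}$, i.e.\ when the $[\mathbf{u},\mathbf{w}]$-data constrains every activatable circle.

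The main obstacle is the translation step: verifying that a single equality $c_{S}(u_{i})=w_{i}$ is equivalent to a one-coordinate membership condition. This rests on two facts that must be pinned down carefully—that the activatable circles are vertex-disjoint, so that $u_{i}$ controls at most one coordinate, and that a vertex on a circle has a uniquely determined successor $g(u_{i})$, so that $w_{i}$ can meet at most one of its two admissible values. The remaining care is bookkeeping for the degenerate cases, namely unsatisfiable constraints and repeated indices forcing one circle in opposite directions; handling these is what yields the empty sublattice and makes the interval description exhaustive.
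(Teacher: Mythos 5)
The paper itself offers no proof of this lemma --- it is imported verbatim from \cite{OHMTT} (Theorem 3.11) --- so there is nothing internal to compare against line by line; judged on its own merits, your argument is correct in substance and reconstructs the mechanism behind the cited result. Your coordinatization is the right one: by Lemma \ref{boolean} the class $\mathcal{A}$ is Boolean with bottom the fully packed (all-backstep) contributor, every element is obtained by activating a subset of the pairwise vertex-disjoint cycles of the endpoint function $g$ (disjointness coming from $g$ being a function, exactly because edges of a bidirected graph have size $2$ and so unpacking is uniquely determined), each head--tail constraint $c(u_{i})=w_{i}$ then reads as a one-coordinate condition, and the resulting interval $\left[J_{+},\mathcal{O}\setminus J_{-}\right]$ of $2^{\mathcal{O}}$ is a sublattice isomorphic to $2^{(\mathcal{O}\setminus J_{-})\setminus J_{+}}$; your sequential reading, one principal ideal or filter per coordinate, is precisely what the phrase ``sequential order ideals'' encodes in \cite{OHMTT}. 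One case in your translation step needs repair: when $f(u_{i})$ is a loop edge, so that $g(u_{i})=u_{i}$, the packed backstep and the activated loop have the same head--tail pair, and the constraint $w_{i}=u_{i}$ is \emph{vacuous} rather than forcing $O_{j(i)}\notin S$ --- your stated dichotomy (``forces out when $w_{i}=u_{i}$'') is wrong there, though harmlessly so, since a free coordinate still leaves an interval and the sub-Boolean-lattice conclusion stands. (The paper's framework does admit loops as orientable $1$-cycles, so this case cannot simply be excluded by the size-$2$ hypothesis; note that for a loop both incidences lie at one vertex, so ``the unique other endpoint'' should be read as the unique completion of the adjacency, which may return $u_{i}$ itself.) Your closing observation, that the class is a singleton exactly when the constraints pin every activatable circle, is also consistent with how single-element classes are used in Theorem \ref{karbor}.
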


\begin{lemma}[\cite{OHMTT}, Lemma 4.5]
If $G$ is a bidirected graph, then the set of elements in all single-element $\hat{\mathcal{A}}_{\neq 0}(u;w;G^{\prime })$ is unpacking equivalent
to the set of spanning trees of $G$. Where $G'$ is the injective envelope of $G$ in the category of graphs (i.e. the completion of the underlying graph).
\label{lemspan}
\end{lemma}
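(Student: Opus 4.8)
The plan is to pin down exactly which restricted activation classes $\hat{\mathcal{A}}(u;w;G')$ are singletons, to read off their combinatorial content as rooted spanning trees, and then to account for orientation and root choices through unpacking equivalence. The two tools I would lean on are Lemma \ref{boolean}, which makes every activation class a Boolean lattice under $\leq_{a}$ whose atoms are the independently activatable circles, together with the fact that in a bidirected graph every edge has size $2$, so each pack/unpack move has a \emph{unique} target. The latter is what makes the lattice structure clean and lets me treat a contributor as a permutation of $V$ realized by adjacencies (nontrivial circles) together with backsteps (fixed points).

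First I would characterize single-element classes. Since each $\mathcal{A}$ is a Boolean lattice and, by the sub-Boolean-lattice lemma cited just above, the restricted class $\hat{\mathcal{A}}(u;w;G')$ is itself a sub-Boolean lattice cut out by sequential order ideals, this restricted class is a singleton exactly when its rank is $0$, i.e.\ when the reduced contributor admits no activatable circle. Removing the term from $u$ to $w$ deletes the outgoing adjacency at the tail $u$, so the reduced contributor assigns to each vertex other than $u$ a single outgoing adjacency while $u$ becomes a sink. I claim rank $0$ is equivalent to this adjacency digraph being acyclic: any nontrivial circle among the adjacencies could be packed into backsteps and re-activated, producing a second lattice element, while conversely an acyclic assignment in which every vertex reaches the sink $u$ admits no backstep configuration that closes a new circle. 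An acyclic, spanning, everywhere-$u$-reaching assignment is precisely a spanning arborescence rooted at $u$, whose underlying undirected edge set is a spanning tree.

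Second I would use the subobject indicator to descend from the completion $G'$ to $G$. The condition $\neq 0$ (equivalently $\delta_G=1$ on the reduced object) forces every adjacency used by the contributor to come from a genuine edge of $G$ rather than one of the edges adjoined in forming $G'$; hence the spanning arborescence is supported on $E(G)$, and its undirected skeleton is a spanning tree of $G$ itself. Thus the single-element, nonzero, restricted classes are in bijection with the rooted spanning arborescences of $G$.

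Finally I would collapse the directed data. Forgetting orientation sends a rooted spanning arborescence of $G$ to a spanning tree of $G$; conversely, a spanning tree together with a choice of sink orients uniquely into an arborescence, and because each edge has a unique unpacking target, two single-element elements have the same undirected skeleton precisely when a sequence of unpackings carries one to the other. Ranging over all admissible $(u;w)$ therefore identifies the collection of all single-element $\hat{\mathcal{A}}_{\neq 0}(u;w;G')$ elements, modulo unpacking equivalence, with the spanning trees of $G$, as claimed. The step I expect to be the main obstacle is the equivalence in the first characterization---matching the purely order-theoretic condition ``the restricted activation class has rank $0$'' with the combinatorial condition ``the adjacency digraph is an acyclic, root-reaching, spanning assignment''---since it is here that the Boolean-lattice machinery and the uniqueness of pack/unpack targets must be combined to rule out any hidden activatable circle.
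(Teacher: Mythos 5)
First, a point of order: the paper does not actually prove this lemma---it is quoted verbatim from \cite{OHMTT} as background---so the only in-paper argument to compare against is the proof of Theorem \ref{karbor}, of which Lemma \ref{lemspan} is the $k=1$ special case. Measured against that proof, your overall route is the same one: characterize the single-element restricted classes through the activation order (the paper argues a singleton class forces $c$ to contain no circle and to admit no activation, which is your ``rank $0$'' condition read off from Lemma \ref{boolean}), use the nonvanishing condition to force the surviving structure into $G$ rather than the completion, and then unpack to obtain a rooted spanning structure. Your order-theoretic packaging of the first step is a pleasant reformulation, not a different method.

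However, there is a genuine gap, and it sits exactly where you predicted. Your pivotal equivalence is stated about the wrong digraph: acyclicity of the adjacencies \emph{actually present} in the reduced contributor does not rule out activatable circles, because circles can be hidden entirely among the backsteps. Concretely, if vertices $v$ and $w$ carry backsteps on the two incidences of a single edge $e$ joining them, unpacking both activates the degenerate $2$-circle traversing $e$ twice (exactly the ``closed walk along a single edge'' the paper flags in its Sachs discussion), so a contributor whose adjacency digraph is a forest can still lie strictly below another element of its class under $\leq_{a}$. The repair is the step the paper's proof of Theorem \ref{karbor} takes and your sketch never quite executes: since every edge of a bidirected graph has exactly two incidences, each backstep has a \emph{unique} unpacking target, hence each contributor has a unique total unpacking $p$; the class $\hat{\mathcal{A}}_{\neq 0}(u;w;G')$ is a singleton if and only if $p$ is circle-free. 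In the reduced object every vertex other than $u$ has out-degree one and $u$ has out-degree zero, so a circle-free $p$ has $|V|$ vertices and $|V|-1$ edges and is precisely an inward spanning arborescence with sink $u$, supported on $E(G)$ by the nonzero condition (only the deleted $u\to w$ term is permitted to use a completion edge---this is the entire reason for passing to $G'$). Your ``root-reaching'' clause is equivalent to acyclicity only after this passage to the total unpacking, and your converse direction also needs the same tool spelled out: a packed arborescence admits no packable circle and, by uniqueness of unpacking targets, no activation, so its class is indeed a singleton. With those two sentences supplied, your argument closes and coincides with the paper's.
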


The total minor polynomials can be used to extend the results of Lemma \ref{lemspan}. 

\begin{theorem}
\label{karbor}
In a bidirected graph $G$ the set of all elements in single-element $\hat{\mathcal{A}}_{\neq 0}(\mathbf{u};\mathbf{w};L(G))$ is unpacking equivalent to $k$-arborescences. Moreover, the $i^{th}$ component in the arborescence has sink $u_i$, and the vertices of each component are determined by the linking induced by $c^{-1}$ between all $u_i \in U \cap \overline{W} \rightarrow \overline{U}$ or unpack into a vertex of a linking component.
\end{theorem}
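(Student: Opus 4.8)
The plan is to upgrade Lemma \ref{lemspan} (the case $k=1$) to all $k$ by combining the Boolean-lattice description of activation classes (Lemma \ref{boolean}) with the classical dictionary between acyclic out-degree-one assignments and rooted forests. I would work throughout in the loading $L(G)$, where every $[\mathbf{u},\mathbf{w}]$-class is nonempty (cf.\ Figure \ref{fig:4vert}), and use the indicator $\delta_G$ / the $\neq 0$ restriction to descend to genuine incidences of $G$ afterward, exactly as $G'$ is used for $k=1$. First I would record that, since $G$ is bidirected, every edge has a unique completion, so each copy of $\overrightarrow{P}_1$ in a contributor selects a single incident edge and is either a backstep or the adjacency along that edge. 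Consequently an activation class is determined by a fixed assignment $v\mapsto e_v$ of an incident edge to each vertex, its members obtained by independently activating or packing the vertex-disjoint circles supported on that assignment --- this is the Boolean lattice of Lemma \ref{boolean}.

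Next I would fix orderings $\mathbf{u},\mathbf{w}$ and pass to $\hat{\mathcal{A}}(\mathbf{u};\mathbf{w};L(G))$ by deleting the step $u_i\rightarrow w_i$ from each member, so that each $u_i$ becomes a source with no outgoing step. Reading the assignment $v\mapsto e_v$ as a parent-pointer (the other endpoint of $e_v$, i.e.\ the head obtained by unpacking) defines a function on $V\setminus\{u_1,\dots,u_k\}$ valued in $V$. I would then show that the reduced class is a singleton precisely when this parent-pointer function has no directed cycle: a surviving circle is exactly a directed cycle of the assignment meeting none of the deleted steps, and each such circle contributes an independent binary toggle to the sub-Boolean lattice of $\hat{\mathcal{A}}$. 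Acyclicity of an out-degree-$\le 1$ function on $|V|$ vertices with source-set exactly $\{u_1,\dots,u_k\}$ is the defining property of a spanning forest of $k$ in-trees whose sinks are the $u_i$, which is the $k$-arborescence. For the converse I would reattach the deleted steps to a given $k$-arborescence and invoke Lemma \ref{PermLemma} to see that all such reconstructions are permutomorphic, so that the correspondence between single-element reduced classes and $k$-arborescences is a bijection, realized by the unpacking equivalence of \cite{OHMTT}.

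The $\neq 0$ condition then forces every $e_v$ to be an incidence of $G$, via the $\delta_G$ of Theorem \ref{t:Main}, so the forests obtained are $k$-arborescences of $G$ itself; specializing $k=1$ recovers Lemma \ref{lemspan}. For the \emph{moreover} clause I would read the component containing $u_i$ off of the inverse permutation: following $c^{-1}$ backward from the sink $u_i$ traverses the path of activated adjacencies produced when the circle through $u_i$ was broken, terminating at $w_i$, so that a $u_i\in U\cap\overline{W}$ is a genuine root linked through $\overline{U}$ to $w_i$. The remaining vertices attach to this path exactly when their backstep unpacks to a vertex already placed in the component, which is the recursive ``unpack into a vertex of a linking component'' condition.

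The step I expect to be the main obstacle is the \emph{moreover} clause, specifically making the $c^{-1}$-linking description exact when $U$ and $W$ overlap (the diagonal indices $u_i=w_i$ give backstep roots, while off-diagonal indices give a nontrivial $w_i\neq u_i$ that must be placed in $u_i$'s tree) and when broken-circle paths and backstep chains must be merged into a single in-tree. By contrast the equivalence between single-element-ness and acyclicity, and the acyclic-assignment/forest dictionary, are the comparatively routine heart of the argument. Care is also needed to keep the unpacking reading of the assignment distinct from the raw permutation $c$, since backsteps are fixed points of $c$ yet encode genuine tree edges after unpacking.
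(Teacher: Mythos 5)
Your proposal is correct and follows essentially the same route as the paper's proof: the paper likewise shows single-element-ness forces the contributor to be circle-free (any packed circle could be unpacked via $<_a$, any activatable circle unpacked via $>_a$, either way yielding a second element), notes the $\neq 0$ condition places the totally unpacked pre-contributor in $G$ with $|V|$ vertices and $|V|-k$ edges (hence a $k$-arborescence), and handles the \emph{moreover} clause by the Linking Lemma with exactly your case split on $u_i\in\overline{W}$ versus $u_i\notin\overline{W}$. Your parent-pointer/acyclic-function dictionary and independent-toggle phrasing are just a repackaging of the paper's activation-class argument via Lemmas \ref{boolean} and \ref{PermLemma}, not a different method.
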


\begin{proof}
Let $\hat{\mathcal{A}}_{\neq 0}(\mathbf{u};\mathbf{w};L(G))$ contain a single element contributor, call it $c$. If $c$ contains a circle, then there would be a $(\mathbf{u}, \mathbf{w})$-equivalent contributor $d$ with $d <_a c$ such that there is a sequence of unpackings that activates into $c$, and $\hat{\mathcal{A}}_{\neq 0}(\mathbf{u};\mathbf{w};L(G))$ would contain more than one element. Moreover, $c$ cannot have any circle that can be activated, or there would be $(\mathbf{u}, \mathbf{w})$-equivalent contributor $d'$ with $c <_a d'$, and $\hat{\mathcal{A}}_{\neq 0}(\mathbf{u};\mathbf{w};L(G))$ would contain more than one element.

Additionally, since the single-element of 
$\hat{\mathcal{A}}_{\neq 0}(\mathbf{u};\mathbf{w};L(G))$ is a non-zero contributor in $L(G)$, the corresponding totally unpacked pre-contributor $p$ exists in $G$. Thus, $p$ is circle-free with exactly $|V|$ vertices and $|V|-k$ edges, so it is a $k$-arborescence. 

By the Linking Lemma every $U \rightarrow W$ matching has an induced linking in the opposite direction. Let $u_i \in U$. If $u_i \notin \overline{W}$, then both the entrant and salient edges are missing at $u_i$, and $u_i$ is isolated before unpacking. If $u_i \in \overline{W}$, then only the salient edge is missing at $u_i$. Since all remaining vertices can only posses backsteps that unpack towards a vertex in the connected component containing a $u_i$, each $u_i$ is the sink of an inward-arborescence. Additionally, all vertices either are in the induced linking or unpack into one of the components.
\qed
\end{proof}

\subsubsection{Example}

\begin{example}
To determine the coefficient for $x_{12}x_{23}$ in $\chi^{D}(\mathbf{L}_{G},\mathbf{x})$ for the graph (i.e. all edges positive) in Figure \ref{fig:4vert} observe that the set $U=\{1,2\}$ corresponding to all first subscript entries and the set $W=\{2,3\}$ corresponding to all second subscript entries. The $[(1,2),(2,3)]$-equivalent contributors, their non-zero reduced contributors, and the unpacking into an inward arborescence appear in Figure \ref{fig:ArborEx}. Each component in each arborescence has an element of $U$ as a sink as well as the corresponding linking in the reduced contributor. The remaining backsteps unpack into the linkings; hence, towards the sinks.

\begin{figure}[ht!]
    \centering
    \includegraphics{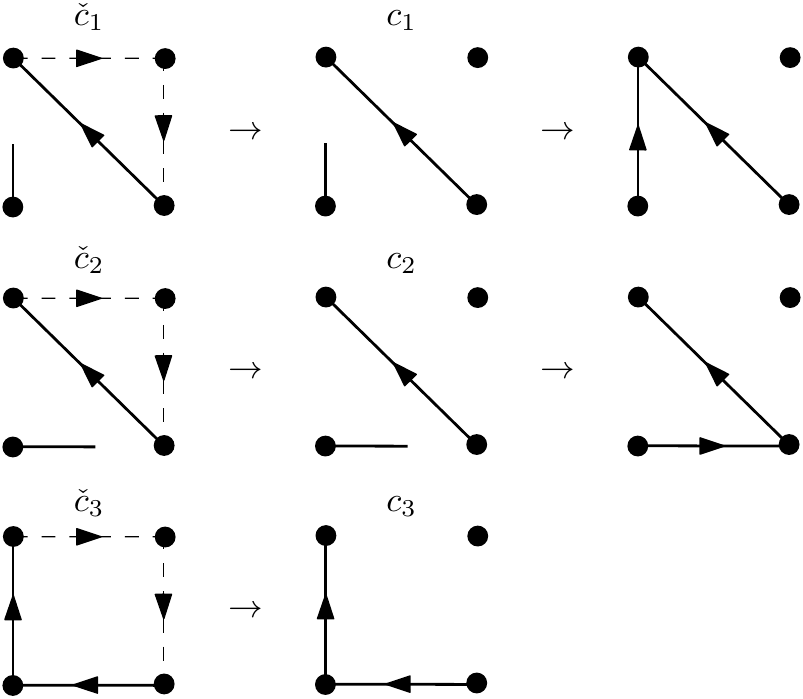}
    \caption{The three $[(1,2),(2,3)]$-equivalent contributors, their reduced subcontributor in $G$ with linking, and the unpacked inward arborescence rooted at $v_1$.}
    \label{fig:ArborEx}
\end{figure}

The signing function for the Laplacian determinant is $(-1)^{ec(\check{c})+nc(c)+bs(c)}$, where $\sgn(c_1)=\sgn(c_2)=(-1)^{0+0+1}=-1$ while $\sgn(c_3)=(-1)^{1+0+0}=-1$, thus the coefficient of $x_{12}x_{23}$ in $\chi^{D}(\mathbf{L}_{G},\mathbf{x})$ is $-3$. Similarly, the coefficient of $x_{12}x_{23}$ in $\chi^{P}(\mathbf{L}_{G},\mathbf{x})$ is $-1$ using the signing function $(-1)^{nc(c)+bs(c)}$.
\end{example}

\section*{Acknowledgments}
The authors would like to thank the anonymous referee for their time and for the helpful and motivating suggestions to improve the quality and delivery of the paper.

\newpage
\providecommand{\bysame}{\leavevmode\hbox to3em{\hrulefill}\thinspace}
\providecommand{\MR}{\relax\ifhmode\unskip\space\fi MR }
\providecommand{\MRhref}[2]{%
  \href{http://www.ams.org/mathscinet-getitem?mr=#1}{#2}
}
\providecommand{\href}[2]{#2}

\end{document}